\newtheorem{Theorem}{Theorem}[section]
\newtheorem{Proposition}[Theorem]{Proposition}
\newtheorem{Lemma}[Theorem]{Lemma}
\newtheorem{Corollary}[Theorem]{Corollary}
\theoremstyle{Definition}
\newtheorem{Definition}[Theorem]{Definition}
\newtheorem{Assumption}[Theorem]{Assumption}
\newtheorem{Example}[Theorem]{Example}
\newtheorem{Remark}[Theorem]{Remark}
\newcommand{\bTheorem}[1]{
\begin{Theorem} \label{T#1} }
\newcommand{\eT}{\end{Theorem}}
\newcommand{\bProposition}[1]{
\begin{Proposition} \label{P#1}}
\newcommand{\eP}{\end{Proposition}}
\newcommand{\bLemma}[1]{
\begin{Lemma} \label{L#1} }
\newcommand{\eL}{\end{Lemma}}
\newcommand{\bCorollary}[1]{
\begin{Corollary} \label{C#1} }
\newcommand{\eC}{\end{Corollary}}
\newcommand{\bRemark}[1]{
\begin{Remark} \label{R#1} }
\newcommand{\eR}{\end{Remark}}
\newcommand{\bDefinition}[1]{
\begin{Definition} \label{D#1} }
\newcommand{\eD}{\end{Definition}}
\newcommand{\bFormula}[1]{
	\begin{equation} \label{#1}}
\newcommand{\eF}{\end{equation}}
\newcommand{\vc}[1]{{\bf #1}}
\newcommand{\Div}{{\rm div}_{ x}}
\newcommand{\Grad}{\nabla_{ x}}
\newcommand{\vr}{\varrho}
\newcommand{\vt}{\vartheta}
\newcommand{\defjump}[1]{\left[\left[ #1 \right]\right]}
\def\inerface{\Sigma_{\tt int}}
\newcommand{\projection}[1]{\Pi_h[ #1 ]}
\newcommand{\norm}[1]{ \lVert #1 \rVert }
\newcommand{\abs}[1]{\left| #1\right|}
\newcommand{\tvS}{{\widetilde{\eta}}}
\newcommand{\tvp}{{\widetilde{p}}}
\newcommand{\tve}{{\widetilde{e}}}
\newcommand{\tvU}{\widetilde{\vU}}
\newcommand{\dt}{\,{\rm d} t }
\newcommand{\dx}{\,{\rm d} { x}}
\newcommand{\dxdt}{\dx  \dt}
\newcommand{\pd}{\partial}
\newcommand{\E}{\mathbb{E}}
\newcommand{\jump}[1]{\left[ \left[ #1 \right] \right]}
\newcommand{\vrh}{\vr_h}
\newcommand{\vmh}{\vm_h}
\newcommand{\tvm}{\widetilde{\vc{m}}}
\newcommand{\tS}{\widetilde{S}}
\newcommand{\bfphi}{\boldsymbol{\phi}}
\newcommand{\dsx}{\mathrm{d}S_x}
\newcommand{\vuh}{\vu_h}
\newcommand{\mh}{\vc{m}_h}
\newcommand{\Ov}[1]{\overline{#1}}
\newcommand{\vF}{\vc{F}}
\newcommand{\aleq}{\stackrel{<}{\sim}}
\newcommand{\tvr}{\widetilde \vr}
\newcommand{\tvu}{{\widetilde \vu}}
\newcommand{\tvt}{\widetilde \vt}
\newcommand{\vu}{\vc{u}}
\newcommand{\vm}{\vc{m}}
\newcommand{\vq}{\vc{q}}
\newcommand{\vn}{\vc{n}}
\newcommand{\vU}{\vc{U}}
\newcommand{\intO}[1]{\int_{\Omega} #1 \ \dx}
\newcommand{\intOB}[1]{\int_{\Omega} \left( #1 \right) \ \dx}
\newcommand{\I}{\mathbb{I}}
\newcommand{\DD}{{\rm d}}
\def\softd{{\leavevmode\setbox1=\hbox{d}%
          \hbox to 1.05\wd1{d\kern-0.4ex{\char039}\hss}}}
\definecolor{Cgrey}{rgb}{0.85,0.85,0.85}
\definecolor{Cblue}{rgb}{0.50,0.85,0.85}
\definecolor{Cred}{rgb}{1,0,0}
\definecolor{fancy}{rgb}{0.10,0.85,0.10}
\begin{document}


\title{Error estimates of the Godunov method for the multidimensional compressible Euler system}

\author{M\' aria Luk\' a\v cov\' a -- Medvi\softd ov\' a
\and Bangwei She
\and Yuhuan Yuan
}

\maketitle

\bigskip

\bigskip
\centerline{$^{*,\ddag}$Institute of Mathematics, Johannes Gutenberg-University Mainz}
\centerline{Staudingerweg 9, 55 128 Mainz, Germany}
\centerline{lukacova@uni-mainz.de, yuhuyuan@uni-mainz.de}

\bigskip

\centerline{$^{\dag}$Academy for Multidisciplinary studies, Capital Normal University}
\centerline{ West 3rd Ring North Road 105, 100048 Beijing, P. R. China}
\centerline{and}
\centerline{
Institute of Mathematics of the Czech Academy of Sciences}
\centerline{\v Zitn\' a 25, CZ-115 67 Praha 1, Czech Republic}
\centerline{she@math.cas.cz}

\date

\begin{abstract}
We derive a priori error of the Godunov method for the multidimensional Euler system of gas dynamics.  
To this end we apply the relative energy principle and estimate the distance between the numerical solution and the strong solution. 
This yields also the estimates of the $L^2$-norm of  errors in density, momentum  and entropy. 
Under the assumption that the numerical density and energy are bounded, we obtain a convergence rate of $1/2$ for the relative energy in the $L^1$-norm. 
Further, under the assumption -- the total variation of numerical solution is bounded, we obtain the first order convergence rate for the relative energy in the $L^1$-norm. 
Consequently, numerical solutions (density, momentum and entropy) converge in the $L^2$-norm with the convergence rate of $1/2$. 
The numerical results presented for Riemann problems are consistent with our theoretical analysis.

\end{abstract}

\noindent{\bf Keywords:}  compressible Euler system, error estimates, relative energy, Godunov method, consistency formulation, strong solution

\tableofcontents


\section{Introduction}\label{Introduction}
We consider the Euler system governing the motion of a compressible gas
\begin{equation}\label{pde}
 \partial_t \vU + \Div \vF(\vU) = 0, \quad (t,x) \in (0,T) \times \Omega. 
\end{equation}
Here $\Omega \subset \mathbb{R}^d \, (d=1,2,3)$ is a bounded computational domain, $\vU=(\vr, \vm, E)^T$ represents the fluid density, momentum and total energy, while $\vF$ is the flux function given by 
\[\vF= (\vm, \vu \otimes \vm+ p \I , \vu (E+p))^T. 
\]
Here for positive $\vr$, $\vu =  \frac{\vm}{\vr}$ is the velocity of the fluid and  $p$ is the pressure satisfying the  state equation of perfect gas  
\begin{equation}\label{EOS}
p = (\gamma-1)\vr e, \quad \gamma \in (1,2]
\end{equation}
with the specific internal energy $e= \frac{E}{\vr}  -\frac{1}{2}|\vu|^2 $. 

We close the system with initial data 
\begin{subequations}\label{ini}
\begin{equation}\label{ini-1}
\vU(0,x) =\vU_0 = (\vr_0, \vm_0:=\vr_0 \vu_0, E_0)
\end{equation}
satisfying 
\begin{equation}\label{ini-2}
 \vr_0 > 0 \quad \mbox{and} \quad E_0 \in L^1(\Omega)
\end{equation}
\end{subequations}
 and impermeability boundary condition
\begin{equation}\label{I5}
\vu \cdot \vc{n}|_{\partial \Omega} = 0,
\end{equation}
where $\vn$ is the outer normal vector on the boundary $\partial \Omega$. 
Taking the Second law of Thermodynamics into account we further require that the entropy inequality holds, i.e.
\begin{equation}\label{I6}
\partial_t  \, \eta(\vU ) + \Div  \,\vq(\vU) \geq 0.
\end{equation}
Here $(\eta, \vq)$ is the physical entropy pair given by
\begin{equation}\label{I7}
\eta = C_v \vr S , \quad \vq = \eta \vu \quad \mbox{with } C_v =  \frac{1}{\gamma -1} \mbox{ and } S= \ln\left(\frac{p}{\vr^\gamma}\right) . 
\end{equation}

During the past few decades numerical simulation of the Euler system has been a hot topic in the field of computational mechanics and physics, cf.~Toro~\cite{Toro}, Feistauer et al.~\cite{Feistauer-Felcman-Straskraba:2003} , Li et al.~\cite{Li-Zhang-Yang:1998}, LeVeque~\cite{Leveque:1992}.
Despite the success in practical simulations, a rigorous convergence analysis of the numerical methods still remains open in general.  
Most literature results were focused on scalar conservation laws. 
Kuznetsov \cite{Kuznetsov:1976} showed that the (upper) $L^1$ error bound is $\mathcal{O}(h^{1/2})$ for multi-dimensional scalar conservation laws 
under the assumptions on the boundedness of the total variation and continuity in time of numerical solutions, where $h$ is the mesh parameter. 
Further, Cockburn et al. \cite{Cockburn-Coquel-LeFloch:1994} and Vila \cite{Vila:1994}  extended the result of Kuznetsov  and obtained the $L^1$-error bounds of ${\cal O}(h^{1/4})$ without the assumptions of bounded total variation and continuity in time. 
The convergence rate of some specific waves was also studied in one dimension. 
Concerning the linear advection equation, Tang and Teng \cite{Tang-Teng:1995} showed  the sharpness of the $\mathcal{O}(\sqrt{\Delta x})$ $L^1$-error for  monotone difference schemes with BV initial data. 
For the nonlinear scalar equation 
Teng and Zhang \cite{Teng-Zhang:1997} showed the optimal  convergence rate of $1$ in the $L^1$-norm for the viscosity method and monotone schemes if a solution is piecewise constant with finitely many shocks.  
Moreover, for the piecewise smooth entropy solution with finitely many rarefaction waves, Tang and Teng \cite{Tang-Teng:1997} showed that the error of viscosity solution to the inviscid solution is bounded by $\mathcal{O}(\varepsilon |\log \varepsilon| + \varepsilon)$ in the $L^1$-norm, where $\varepsilon$ denotes the viscosity coefficient.
Furthermore, Tadmor and Tang \cite{Tadmor-Tang:1999} studied the pointwise error estimates and showed that the thicknesses of the shock and rarefaction layers are of order $\mathcal{O}(\varepsilon)$ and $\mathcal{O}(\varepsilon \log^2 \varepsilon)$, respectively. 
We point out that the error estimates for scalar hyperbolic conservation laws are typically given in terms of the $L^1$-norm in space.

%

When considering the multidimensional nonlinear system of hyperbolic conservation laws, to our best knowledge, the only result was done by Jovanovi\'{c} and Rohde \cite{JoRo}, where the convergence rate of $1/2$  was presented  in terms of the $L^2$-errors between the numerical solutions and the classical solution $(\vU \in C^1)$  under the assumption of uniform boundedness of numerical solutions and their $H^1$ semi-norm.   
In this paper we estimate the error between the numerical solutions and the strong solution $(\vU \in W^{1,\infty})$  
assuming that the total variation of the numerical solution is  bounded.  Comparing with \cite{JoRo}, we obtain the same convergence rate under a weaker assumption. 
Moreover, without the assumption of bounded total variation, we still have the convergence rate of  $1/4$. 

The main tool used in the paper is the so-called relative energy functional originally introduced by Dafermos~\cite{Dafer}. 
This technique has been largely used in the analysis of the weak--strong uniqueness and singular limit of the compressible fluid flows, see the monograph of Feireisl and Novotn\'{y} \cite{FeNo}, B\v{r}ezina and Feireisl \cite{Brezina-Feireisl:2018a}, and Feireisl et al. \cite{FLMS,Feireisl-Lukacova-Necasova-Novotny-She:2018}. 
Recently, this technique has also been successfully applied to the convergence analysis of numerical solutions of compressible viscous fluids, see Feireisl et al. \cite{FHMN} and Mizerov{\'a} and She \cite{HS_MAC}. Here we adapt the technique to the Euler system and estimate the corresponding relative energy, which yields the control of the $L^2$-error of density, momentum and entropy, too.

The rest of the paper is organized as follows. 
In Section \ref{sec_pre} we introduce some preliminaries. 
More precisely, we recall the Godunov method and its consistency formulation proved in  Luk\'{a}\v{c}ov\'{a} and Yuan \cite{LMY}. 
We define the strong solution of the Euler system and the relative energy. 
Further, we prove the relative energy inequality in Section \ref{sec_ee} and estimate its error in the  $L^1$-norm. 
Finally, in Section \ref{sec_exp} we present some numerical experiments to validate theoretical results. 

\section{Preliminaries}\label{sec_pre}
In this section we introduce the preliminaries, including the formulation of the Godunov method, its consistency formulation, and the definitions of the strong solution and relative energy. 

To begin, we define the following notations for the later use
\begin{align*}
&\bullet\quad a \lesssim b \quad \mbox{ if } \quad a \leq c b \mbox{ with a positive constant c},
\\
&\bullet\quad a \approx b \quad \mbox{ if } \quad a \lesssim b \mbox{ and  } b \lesssim a.
\end{align*}

\subsection{Godunov method}
The computational domain $\Omega$ consists of rectangular meshes $\overline{\Omega} := \bigcup_{K} \overline{K}$.
We denote the set of all mesh cells as $\mathcal{T}_h$ and the set of all interior faces of $\mathcal{T}_h$ as $\Sigma_{\rm int}$. 
We consider the space of piecewise constant functions
\begin{equation}
\mathcal{Q}_h(\Omega) = \{ v :  v|_{K^o} = \mbox{constant}, ~ \mbox{for all}~ K \in \mathcal{T}_h \}
\end{equation}
and define the projection operator
\begin{equation}
\Pi_h :  L^1(\Omega) \rightarrow \mathcal{Q}_h(\Omega), \quad \projection{\phi}_K = \frac{1}{|K|} \int_K \phi (x) ~dx,
\end{equation}
where $|K|$ is the Lebesgue measure of $K$.

Let $\vU_h \in  \mathcal{Q}_h(\Omega;\mathbb{R}^{d+2})$.
Then the semi-discrete form of the finite volume method with the Godunov flux, i.e. the Godunov method can be described as 
\begin{subequations}\label{eq:semi-discrete-RPflux-phi}
\begin{align} 
& \int_{\Omega} \phi  \frac{d }{d t} \vU_h ~ \dx -  \sum_{\sigma \in \inerface} \int_{\sigma} \vF(\vU^{\it RP}_{\sigma}) \cdot \vn \defjump{\phi} ~dS_{x} = 0, \\
& \vU_{h0} = \projection{\vU_0} .
\end{align}
\end{subequations}
Here $\phi \in \mathcal{Q}_h(\Omega)$ is the test function, $\vU^{\it RP}_{\sigma}$ is the exact solution of a local Riemann problem along the interface $\sigma$, and the notation $\defjump{ \cdot}$ denotes the jump along the interface.

\subsection{Consistency formulation}
We recall  the consistency formulation of the Godunov method derived by Luk\'{a}\v{c}ov\'{a} and Yuan \cite{LMY}.  
We start with the following assumption.

\begin{Assumption}\label{H1}
We assume that the solution to \eqref{eq:semi-discrete-RPflux-phi} satisfies
\begin{equation}\label{eqH1}
0 < \underline{\vr} \leq \vrh , \quad
0 < E_h \leq \overline{E}  \quad \mbox{uniformly for } h \rightarrow 0
\end{equation}
for all $t\in[0,T]$, where $\underline{\vr}, \overline{E}$ are some positive constants. 
\end{Assumption}

\begin{Lemma}\label{Lemma1}{\footnote{The proof of Lemma~\ref{Lemma1} could be found in \cite{Feireisl-Lukacova-Mizerova:2020a}.}}
Under Assumption~\ref{H1}
there hold
\begin{align}
&0 < \underline{\vr} \leq \vr_h \leq \overline{\vr}, \quad
|\vu_h| \leq \overline{u},\quad
0 < \underline{p} \leq p_h \leq \overline{p}, ~ \\
& |\mh| \leq \overline{m},\quad
0 < \underline{E} \leq E_h \leq \overline{E}, \quad
0 < \underline{\vartheta} \leq \vartheta_h \leq \overline{\vartheta}
\end{align}
uniformly for $h \rightarrow 0, t\in[0,T]$ with positive constants $ \overline{\vr},\, \overline{u},\, \underline{p},\, \overline{p},\, \overline{m},\, \underline{E},\,\underline{\vartheta},\,\overline{\vartheta}$ depending on $\underline{\vr}, \overline{E}$, where $\vartheta := \frac{p}{\vr}$ is the absolute temperature.
\end{Lemma}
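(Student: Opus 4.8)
The plan is to bootstrap from the two hypotheses of Assumption~\ref{H1} — the lower bound on density $\underline{\vr}\le\vr_h$ and the upper bound on total energy $E_h\le\overline{E}$ — by using the structural relations among the thermodynamic variables together with a single additional ingredient: the discrete energy/entropy dissipation balance satisfied by the Godunov scheme. First I would record the elementary pointwise algebraic consequences. From $E_h = \tfrac12\vr_h|\vu_h|^2 + \vr_h e_h$ and positivity of $e_h$ (which holds because the Godunov solution is built from exact Riemann solutions, hence stays in the physical state space), we immediately get $\tfrac12\vr_h|\vu_h|^2 \le E_h \le \overline{E}$, so $|\vu_h|^2 \le 2\overline{E}/\underline{\vr} =: \overline{u}^2$; this also bounds $|\mh| = \vr_h|\vu_h| \le \overline{m}$ once $\overline{\vr}$ is known. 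Similarly $\vr_h e_h \le \overline{E}$ gives $p_h = (\gamma-1)\vr_h e_h \le (\gamma-1)\overline{E} =: \overline{p}$.

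The genuinely new bounds needed are the \emph{upper} bound $\vr_h\le\overline{\vr}$, and the \emph{lower} bounds $\underline{p}\le p_h$, $\underline{E}\le E_h$, $\underline{\vartheta}\le\vartheta_h$. Here I would invoke the energy stability of the scheme: testing \eqref{eq:semi-discrete-RPflux-phi} appropriately (or, equivalently, citing the discrete entropy inequality that the exact Riemann solver guarantees) yields that the total energy $\int_\Omega E_h\,\dx$ is non-increasing in time, and — more importantly — that the \emph{total entropy} $\int_\Omega \eta(\vU_h)\,\dx = C_v\int_\Omega \vr_h S_h\,\dx$ is non-decreasing. Combining a uniform bound on $\int_\Omega E_h\,\dx$ (from the initial data and energy decay) with the entropy production inequality and convexity of the entropy, one controls $\int_\Omega \vr_h|\ln(p_h/\vr_h^\gamma)|\,\dx$ and, via the standard Gibbs-type coercivity of the relative energy around a fixed reference state, a uniform bound on $\int_\Omega(\vr_h + \vr_h|e_h - \bar e| + \ldots)\,\dx$. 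On a \emph{fixed} mesh these integral bounds translate to pointwise bounds by the inverse inequality $\|v\|_{L^\infty}\lesssim h^{-d/2}\|v\|_{L^2}$, but to get bounds \emph{uniform in $h$} one instead argues cellwise: on each cell $K$ the updated value of $\vU_h$ lies in the convex hull of neighbouring Riemann-averaged states, and the Godunov flux preserves the invariant region $\{\vr>0,\ e>0,\ S\ge S_{\min}(\vU_{h0})\}$. The uniform lower bound $S_h \ge S_{\min}$ together with $p_h = \vr_h^\gamma e^{S_h}$ and the already-established bounds $\underline{\vr}\le\vr_h$, $p_h\le\overline{p}$ then forces $\vr_h^\gamma e^{S_{\min}} \le \vr_h e_h(\gamma-1)\cdot(\text{stuff})$, pinning $\vr_h$ from above by $\overline{\vr}$; feeding $\vr_h\le\overline{\vr}$ back gives $\underline{p} := \underline{\vr}^{\,\gamma} e^{S_{\min}}\le p_h$, hence $\underline{\vartheta} := \underline{p}/\overline{\vr} \le \vartheta_h = p_h/\vr_h \le \overline{p}/\underline{\vr} =: \overline{\vartheta}$ and $\underline{E} := \vr_h e_h \ge \underline{p}/(\gamma-1) > 0$.

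The step I expect to be the main obstacle is precisely establishing the uniform-in-$h$ \emph{upper} bound on $\vr_h$ and the uniform-in-$h$ \emph{lower} bound on $p_h$ (equivalently on the temperature). These do not follow from pointwise algebra alone: they require exploiting the invariant-region / minimum-entropy-principle property of the exact Riemann solver — that every cell value stays in a region determined by the initial data and the running energy bound — and then reading off the bounds on $(\vr_h,p_h,\vartheta_h)$ from membership in that region. This is exactly the content worked out in \cite{Feireisl-Lukacova-Mizerova:2020a}, so for the write-up I would carry out the elementary reductions above explicitly and refer to that reference for the invariant-region argument that closes the remaining bounds.
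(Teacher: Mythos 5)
Your reconstruction is correct and follows the same route as the argument the paper defers to in \cite{Feireisl-Lukacova-Mizerova:2020a}: the bounds on $|\vu_h|$, $|\vm_h|$ and the upper bound on $p_h$ are elementary consequences of $E_h\le\overline{E}$, $\vr_h\ge\underline{\vr}$ and positivity of the internal energy, while the upper bound on $\vr_h$ and the lower bounds on $p_h$, $\vartheta_h$, $E_h$ come from the minimum entropy principle $S_h\ge S_{\min}$ of the exact Riemann solver combined with $p_h=\vr_h^{\gamma}e^{S_h}$, exactly as you identify (your detour through integral entropy bounds is unnecessary, and note the constants then also depend on the initial data through $S_{\min}$, and $\underline{E}$ should read $\underline{E}:=\underline{p}/(\gamma-1)$). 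This matches the cited proof in both structure and key ingredient.
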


\begin{Theorem}(Consistency formulation) \label{Th1}
 Let $(\vrh, \mh, \eta_h)$ be the numerical solutions obtained by the Godunov method \eqref{eq:semi-discrete-RPflux-phi} on the time interval $[0,T]$ satisfying Assumption \ref{H1}. 
Then for any $\tau \in (0,T)$ the following hold:
\begin{itemize}
		\item for all $\phi \in W^{1,\infty}((0,T)\times \Omega)${\footnote{Throughout this paper, we refer $f \in W^{1,\infty}$ to $f \in W^{1,\infty}\bigcap C^0$ .  }}
		\begin{equation}\label{CE1}
		\left[ \int_{\Omega} \vrh \phi ~\dx  \right]_{t = 0}^{t = \tau} = \int_{0}^{\tau} \int_{\Omega} \bigg( \vrh \partial_t \phi +   \mh \cdot \Grad  \phi  \bigg) \dxdt + \int_{0}^{\tau} e_{\vr,h}(t,\phi) \dt;
		\end{equation}
		
		\item for all $\bfphi \in W^{1,\infty}((0,T)\times \Omega; \mathbb{R}^d)$
	\begin{equation}\label{CE2}
		\begin{aligned}
		\left[ \int_{\Omega} \mh \cdot  \bfphi ~\dx  \right]_{t = 0}^{t = \tau} =
		&\int_{0}^{\tau} \int_{\Omega} \bigg( \mh \cdot \partial_t \bfphi +   \frac{\mh \otimes\mh }{\vrh} : \Grad  \bfphi \\
		& + p_h  \Div  \bfphi   \bigg)\dxdt + \int_{0}^{\tau} e_{\vm,h}(t,\bfphi) \dt;
		\end{aligned}
	\end{equation}
		
		\item for all $\phi \in W^{1,\infty}((0,T)\times \Omega),\, \phi \geq 0$
		\begin{equation}\label{CE3}
		\left[ \int_{\Omega} \eta_h \phi ~\dx  \right]_{t = 0}^{t = \tau} \geq \int_{0}^{\tau} \int_{\Omega} \bigg( \eta_h \partial_t \phi + \vq_h \cdot \Grad  \phi \bigg)\dxdt + \int_{0}^{\tau} e_{\eta,h}(t,\phi) \dt;
		\end{equation}
		
		\item
		\begin{equation}\label{CE4}
		\int_{\Omega} E_h(\tau) ~\dx = \int_{\Omega} E_{0,h} ~\dx
		\end{equation}
\end{itemize}
	
with	bounded errors $e_{j,h}, (j = \vr, \vm, \eta)$ satisfying
	\begin{equation}\label{CE5}
	\begin{aligned}
	\|e_{j,h}\|_{L^1(0,T)} & \lesssim 
	h	\norm{ \phi }_{W^{1,\infty}((0,T)\times \Omega)}  \int_{0}^{\tau} \sum_{\sigma \in \inerface} \int_{\sigma}  \abs{\jump{\vU_h}} ~ \dsx  \dt 
\\&
	\lesssim h^{1/2} \| \phi \|_{W^{1,\infty}((0,T)\times \Omega)} \left(\int_{0}^{\tau} \sum_{\sigma \in \inerface} \int_{\sigma}  \abs{\jump{\vU_h}}^2 ~ \dsx  \dt\right)^{1/2}.
	 \end{aligned}
	\end{equation}
\end{Theorem}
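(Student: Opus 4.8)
The plan is to test the scheme \eqref{eq:semi-discrete-RPflux-phi} with the projections $\projection{\phi}$ of smooth test functions and to isolate the two sources of error that remain — the piecewise-constant approximation $\projection{\phi}-\phi$, and the replacement of the exact flux $\vF$ by the Godunov (exact Riemann) flux — both of which will be bounded by $h\|\phi\|_{W^{1,\infty}}$ times a face jump of $\vU_h$, which is the content of \eqref{CE5}. For the density relation \eqref{CE1}, fix $\phi\in W^{1,\infty}((0,T)\times\Omega)$, insert $\projection{\phi(t,\cdot)}\in\mathcal{Q}_h(\Omega)$ into \eqref{eq:semi-discrete-RPflux-phi}, and integrate over $(0,\tau)$. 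Because $\frac{d}{dt}\vrh\in\mathcal{Q}_h$ and $\Pi_h$ is the $L^2$-orthogonal projection, $\int_{\Omega}\projection{\phi}\,\frac{d}{dt}\vrh\dx=\int_{\Omega}\phi\,\frac{d}{dt}\vrh\dx$; combined with $\frac{d}{dt}\int_{\Omega}\phi\,\vrh\dx=\int_{\Omega}\partial_t\phi\,\vrh\dx+\int_{\Omega}\phi\,\frac{d}{dt}\vrh\dx$ this produces the time-boundary term and the $\vrh\partial_t\phi$ term of \eqref{CE1}. It then remains to compare the face sum $\sum_{\sigma\in\inerface}\int_\sigma\vF(\vU^{\it RP}_{\sigma})\cdot\vn\,\defjump{\projection{\phi}}\,\dsx$ coming from the scheme with $\int_{\Omega}\mh\cdot\Grad\phi\dx$. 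Starting from the elementary identity $\int_{\Omega}\mh\cdot\Grad\phi\dx=-\sum_{\sigma\in\inerface}\int_\sigma\phi\,\vn_\sigma\cdot\defjump{\mh}\,\dsx$, valid because $\mh$ is piecewise constant and $\vu\cdot\vn|_{\partial\Omega}=0$, one replaces $\phi$ by $\projection{\phi}$ (error $\lesssim h\|\phi\|_{W^{1,\infty}}\sum_\sigma\int_\sigma|\defjump{\vU_h}|$), reorganizes the face sum by a discrete integration by parts, and uses that the Godunov flux $\vF(\vU^{\it RP}_{\sigma})$ differs from the value of $\vF(\vU_h)$ on either cell adjacent to $\sigma$ by $\lesssim|\defjump{\vU_h}|$ — which holds because, by Lemma \ref{Lemma1}, all numerical states lie in a fixed compact subset of the state space on which $\vF$ is Lipschitz. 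Since also $|\defjump{\projection{\phi}}|\lesssim h\|\phi\|_{W^{1,\infty}}$, this second error is again $\lesssim h\|\phi\|_{W^{1,\infty}}\sum_\sigma\int_\sigma|\defjump{\vU_h}|$, and collecting the two contributions gives \eqref{CE1} with the first bound of \eqref{CE5}.

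The momentum relation \eqref{CE2} is obtained the same way with a vector test function $\bfphi$; the convective and pressure fluxes $\frac{\mh\otimes\mh}{\vrh}$ and $p_h\I$ come out of the Godunov flux up to an $\mathcal{O}(|\defjump{\vU_h}|)$ error, again by Lipschitz continuity of the Euler flux on the compact range supplied by Lemma \ref{Lemma1}. For the entropy inequality \eqref{CE3} one uses in addition that the exact Riemann solver is entropy-consistent, so the scheme satisfies a discrete entropy inequality (the physical entropy flux $\vq$ of the self-similar Riemann solution dominates the numerical entropy flux); testing with $\projection{\phi}\ge0$ and repeating the above manipulations yields \eqref{CE3} with the inequality sign and $e_{\eta,h}$ controlled as before. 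The energy identity \eqref{CE4} is simply the case $\phi\equiv1$ of the total-energy component of \eqref{eq:semi-discrete-RPflux-phi}: a constant test function has vanishing jump across every face, the flux sum drops out entirely, and the cell sum yields $\int_{\Omega}E_h(\tau)\dx=\int_{\Omega}E_{0,h}\dx$ with no error term.

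It remains to pass from the first bound in \eqref{CE5} to the second, which is a Cauchy--Schwarz argument in space and time. On each face, $\int_\sigma|\defjump{\vU_h}|\,\dsx\le|\sigma|^{1/2}\bigl(\int_\sigma|\defjump{\vU_h}|^2\,\dsx\bigr)^{1/2}$; since the rectangular mesh carries $\mathcal{O}(h^{-d})$ interior faces with $|\sigma|\approx h^{d-1}$, we have $\sum_{\sigma\in\inerface}|\sigma|\lesssim h^{-1}$, so a discrete Cauchy--Schwarz over $\sigma$ followed by one over $t$ turns the prefactor $h$ into $h\cdot h^{-1/2}=h^{1/2}$ multiplying $\bigl(\int_0^\tau\sum_{\sigma\in\inerface}\int_\sigma|\defjump{\vU_h}|^2\,\dsx\,\dt\bigr)^{1/2}$, which is the second estimate in \eqref{CE5}.

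The step I expect to be the main obstacle is this flux estimate — showing that the exact Godunov flux, and likewise the entropy flux of the Riemann solution needed in \eqref{CE3}, differ from a Lipschitz-consistent numerical flux by at most $C(\underline{\vr},\overline{E})\,|\defjump{\vU_h}|$ uniformly in $\sigma$ and $t$. This requires exploiting the structure of the self-similar solution of the Riemann problem for \eqref{pde}--\eqref{EOS} together with the uniform bounds of Lemma \ref{Lemma1}, which keep every intermediate state in a fixed compact set where $\vF$, the entropy pair $(\eta,\vq)$, and the Riemann solver all depend Lipschitz-continuously on the data; carrying this through for the entropy flux is the most delicate part.
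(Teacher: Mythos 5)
Your proposal is correct and follows essentially the route of the proof this paper relies on: Theorem~\ref{Th1} is not proved here but quoted from \cite{LMY}, where the argument is precisely the one you outline --- testing \eqref{eq:semi-discrete-RPflux-phi} with $\Pi_h[\phi]$, using the $L^2$-orthogonality of the projection for the time term, bounding the deviation of the exact Riemann flux $\vF(\vU^{\it RP}_{\sigma})\cdot\vn$ from the physical flux of the adjacent cell states by the face jumps via local Lipschitz continuity of the Riemann solver on the compact, vacuum-free state set furnished by Lemma~\ref{Lemma1}, invoking entropy admissibility of the exact Riemann solution for \eqref{CE3}, and a space--time Cauchy--Schwarz argument (with $\sum_{\sigma}|\sigma|\lesssim h^{-1}$) for the $h^{1/2}$ form of \eqref{CE5}. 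The one point you gloss over is the boundary term $\int_{\partial\Omega}\phi\,\mh\cdot\vn\,\dsx$ produced by test functions that are not compactly supported, which the cited proof absorbs into the consistency error through the reflected boundary Riemann problem under the impermeability condition \eqref{I5}; this is a technical detail rather than a gap in the approach.
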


\subsection{Strong solution}
Our aim is to analyze the convergence rate of the Godunov method when approximating the strong solution of the Euler system \eqref{pde}--\eqref{I5}. 
\begin{Definition} [Strong solution] \label{DefS} 
Let $\Omega \subset \mathbb{R}^d$ be a bounded domain with a boundary $\partial \Omega$ of class~$C^1$. 
We say that a trio $[\tvr, \tvu, \tvS]$ is the \emph{strong solution} of the Euler system \eqref{pde}--\eqref{I5}  if 
\begin{align*}
& \tvr \in 
W^{1,\infty}((0,T) \times \Omega),\\
& \tvu \in 
W^{1,\infty}((0,T) \times \Omega; \mathbb{R}^d),\\
& \tvS \in 
W^{1,\infty}((0,T) \times \Omega), \\
&  \tvr>0 \mbox{ and }  \vt(\tvr, \tvS) >0 \ \mbox{for any}\ (t,x) \in [0,T] \times \Ov{\Omega}
\end{align*}
and the equations \eqref{pde}--\eqref{I5} are satisfied for almost everywhere.
\end{Definition}

\noindent Let us point out that  we consider  $\vr$ and $\eta$ as the independent thermodynamical variables throughout the paper, meaning that all other thermodynamical variables are functions of $(\vr, \eta)$. Accordingly, we write   $\widetilde{v} = v(\tvr,  \tvS)$, $v\in\{p, e, \vt, S\}$, for the strong solution. Moreover, we denote $\tvm = \tvr \tvu$ and $\tvU =\vU(\tvr, \tvu, \tvS)$.

Since the domain is bounded and $(\tvr, \tvt)$ is continuous and positive, we have
\begin{equation}\label{H2}
0 < \underline{\vr} \leq \tvr,\quad 0 < \underline{\vt} \leq \tvt.
\end{equation}


\begin{Remark}
According to the definition of strong solution, we know that an entropy solution only containing finitely many rarefaction waves is also a strong solution.
\end{Remark}

We recall \emph{Gibbs'} relation 
\begin{equation} \label{Gibbs}
\tvt \DD \tS = \DD \widetilde{e} + \widetilde{p} ~\DD \left( 1/\tvr \right) .
\end{equation}
Consequently,  for any strong solution $(\tvr, \tvu, \tvS)$ we obtain the following identities which will be used in Section~\ref{sec_ee}
\begin{equation}\label{EQS}
	\begin{aligned}
	& \partial_t \tvr + \tvu \cdot \Grad \tvr + \tvr \,\Div \tvu = 0, \\
	& \partial_t \tvu + \tvu \cdot \Grad \tvu + \frac1{\tvr}\, \Grad \widetilde{p} = 0, \\
	& \partial_t \tvS + \tvu \cdot \Grad \tvS +  \tvS\, \Div  \tvu = 0,\\
	& \partial_t \widetilde{p} + \tvu \cdot \Grad \widetilde{p} + \gamma  \widetilde{p} \, \Div \tvu = 0, \\
	& \partial_t \tS + \tvu \cdot \Grad \tS = 0, \\
	& \partial_t \tvt + \tvu \cdot \Grad \tvt + \left(\partial_{\tvS} \widetilde{p} \right)_{\tvr} \Div \tvu = 0,	
	\end{aligned}
\end{equation}
see \cite{FLMS,Toro} for more details.


\subsection{Relative energy}
In this part we introduce the relative energy 
and  show the relationship between the relative energy and the $L^2$-error of $(\vr, \vm, \eta)$ for numerical solutions. 

Let $(\vr, \vm, \eta)$ and $(\tvr, \tvu, \tvS)$ be two vectors consisting of density, momentum and velocity, respectively, and total entropy. 
In the context of the compressible Euler system, the relative energy reads
\begin{equation} \label{RE}
{\E} \left( \vr, \vm, \eta \Big| \tvr, \tvu, \tvS \right) 
= ~ \frac{1}{2} \vr \left| \frac{\vm}{\vr} - \tvu \right|^2  + \vr e
- \frac{\pd  (\vr e)}{\pd \vr}\Big|_{(\tvr, \tvS)} (\vr - \tvr) -  \frac{\pd  (\vr e)}{\pd \eta}\Big|_{(\tvr, \tvS)}  (\eta - \tvS ) -  \tvr \tve
\end{equation}
for $\vr>0$.

\begin{Lemma}\label{LMEQ}
let $(\tvr,\tvu,\tvS)$ be the strong solution of the Euler system in the sense of Definition~\ref{DefS} and let $(\vrh,\mh,\eta_h)$ be a numerical solution of the Euler system obtained by \eqref{eq:semi-discrete-RPflux-phi} satisfying Assumption~\ref{H1}. 
Then we have the following equivalence 
\begin{equation} \label{REL2-1}
\E  \left( \vrh,  \mh, \eta_h \Big| \tvr, \tvu, \tvS \right) 
 \approx  |\vm_h - \tvm|^2 +  |\eta_h - \tvS|^2 + |\vrh - \tvr|^2.
\end{equation}
\end{Lemma}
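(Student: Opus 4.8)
\medskip
\noindent\textbf{Plan of proof.}
The plan is to recognise the relative energy \eqref{RE} as the second–order Taylor remainder (Bregman divergence) of a strictly convex, smooth function of the conservative–type variables, and then to bound that remainder from above and below using the uniform bounds already available for \emph{both} solutions. Set $\vU=(\vr,\vm,\eta)$ and $\tvU=(\tvr,\tvm,\tvS)$ with $\tvm=\tvr\tvu$, and introduce the total energy expressed through $(\vr,\vm,\eta)$,
\[
\mathcal{E}_{\rm tot}(\vr,\vm,\eta)=\frac{|\vm|^2}{2\vr}+H(\vr,\eta),\qquad
H(\vr,\eta):=\vr e(\vr,\eta)=\frac{1}{\gamma-1}\,\vr^\gamma\exp\!\Big(\frac{(\gamma-1)\eta}{\vr}\Big),
\]
the last identity following from \eqref{EOS}, \eqref{I7} and Gibbs' relation \eqref{Gibbs}. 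A direct computation — in which the $\vm$–terms recombine, via $\tfrac{|\vm|^2}{2\vr}-\tvu\cdot\vm+\tfrac12|\tvu|^2\vr=\tfrac{\vr}{2}|\vu-\tvu|^2$, into the kinetic part of \eqref{RE} — shows
\[
\mathcal{E}\big(\vr,\vm,\eta\,\big|\,\tvr,\tvu,\tvS\big)
=\mathcal{E}_{\rm tot}(\vU)-\mathcal{E}_{\rm tot}(\tvU)-\nabla\mathcal{E}_{\rm tot}(\tvU)\cdot(\vU-\tvU),
\]
i.e.\ $\mathcal{E}$ is exactly the Bregman divergence of $\mathcal{E}_{\rm tot}$. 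Since $\mathcal{E}_{\rm tot}\in C^\infty(\{\vr>0\})$, Taylor's formula with integral remainder yields
\[
\mathcal{E}\big(\vU_h\,\big|\,\tvU\big)=\int_0^1(1-s)\,(\vU_h-\tvU)^{\top}\,\nabla^2\mathcal{E}_{\rm tot}\big(\tvU+s(\vU_h-\tvU)\big)\,(\vU_h-\tvU)\,\dd s .
\]

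\medskip
\noindent It then remains to sandwich $\nabla^2\mathcal{E}_{\rm tot}$ between positive multiples of the identity along the whole segment $[\tvU,\vU_h]$. First, $\mathcal{E}_{\rm tot}$ is strictly convex on $\{\vr>0\}$: from its block structure, $\nabla^2\mathcal{E}_{\rm tot}$ is positive definite iff the $2\times2$ Hessian of $H$ in $(\vr,\eta)$ is, and a short computation gives $\partial^2_\eta H>0$ together with $\det\nabla^2 H=(\gamma-1)\,\vr^{2(\gamma-2)}\exp\!\big(\tfrac{2(\gamma-1)\eta}{\vr}\big)>0$ for $\vr>0$ (this is the classical convexity of the total energy seen as an entropy of the Euler system). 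Second, by Lemma~\ref{Lemma1} and Assumption~\ref{H1} the numerical triple $(\vr_h,\vm_h,\eta_h)$ stays, uniformly in $h$ and in $t\in[0,T]$, in a fixed compact subset of $\{\vr>0\}$; by Definition~\ref{DefS} and \eqref{H2}, $(\tvr,\tvm,\tvS)$ — with $\tvr$ continuous and $\ge\underline{\vr}>0$ and $\tvm,\tvS$ bounded — likewise ranges in a fixed compact subset of $\{\vr>0\}$. As the $\vr$–coordinate along $[\tvU,\vU_h]$ is a convex combination of two numbers $\ge\underline{\vr}>0$ and the other coordinates remain bounded, the whole segment lies in a fixed compact convex set $K\subset\{\vr>0\}$, on which $c_1\mathbb{I}\le\nabla^2\mathcal{E}_{\rm tot}\le c_2\mathbb{I}$ for some $0<c_1\le c_2<\infty$. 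Hence
\[
\tfrac{c_1}{2}\,|\vU_h-\tvU|^2\;\le\;\mathcal{E}\big(\vU_h\,\big|\,\tvU\big)\;\le\;\tfrac{c_2}{2}\,|\vU_h-\tvU|^2,
\]
and $|\vU_h-\tvU|^2=|\vr_h-\tvr|^2+|\vm_h-\tvm|^2+|\eta_h-\tvS|^2$ gives \eqref{REL2-1}.

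\medskip
\noindent The only genuinely non-routine points are the Hessian computation for $H$ (a two–variable calculation, classical) and the verification that the segment $[\tvU,\vU_h]$ never leaves $\{\vr>0\}$ — which is exactly why the uniform \emph{lower} bound on the density, for the numerical solution via Lemma~\ref{Lemma1} and for the strong solution via \eqref{H2}, is indispensable; note also that $\vt=\partial_\eta H>0$ automatically once $\vr>0$, so no separate positivity of the internal energy or temperature has to be carried along the segment. An alternative is to split \eqref{RE} into its kinetic part $\tfrac12\vr_h|\vu_h-\tvu|^2$ and its internal (Bregman–in–$H$) part and estimate each separately, but this then forces one to pass between $\vm_h-\tvm$ and $\vu_h-\tvu$ via $\vm_h-\tvm=\vr_h(\vu_h-\tvu)+(\vr_h-\tvr)\tvu$ and the uniform bounds on $\vr_h,\tvr,\tvu$; the single–Hessian route keeps the bookkeeping to a minimum.
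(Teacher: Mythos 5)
Your proof is correct, and it follows the same underlying idea as the paper's --- uniform two-sided bounds on the Hessian of the energy over the compact set where both solutions live --- but it executes the argument in a different (and slightly cleaner) set of variables. The paper splits the relative energy into the kinetic part plus the Bregman divergence of $H(\vr,\eta)=\vr e$ in $(\vr,\eta)$ only, first obtains the equivalence with $|\vu_h-\tvu|^2+|\eta_h-\tvS|^2+|\vr_h-\tvr|^2$ (its \eqref{REL2}), and then converts between $|\vu_h-\tvu|$ and $|\vm_h-\tvm|$ via $\vm_h-\tvm=\vr_h(\vu_h-\tvu)+(\vr_h-\tvr)\tvu$ and the uniform bounds of Assumption~\ref{H1} and Lemma~\ref{Lemma1}; you instead identify $\mathcal{E}$ as the Bregman divergence of the full map $(\vr,\vm,\eta)\mapsto\frac{|\vm|^2}{2\vr}+H(\vr,\eta)$ and sandwich its Hessian along the segment, which lands directly on the momentum form of \eqref{REL2-1} and makes the last conversion step unnecessary. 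Your route also handles a point the paper treats tersely: the paper invokes positive definiteness of $\nabla^2_{(\vr,\eta)}(\vr e)$ only at the reference state $(\tvr,\tvS)$, whereas a two-sided bound on a Bregman divergence genuinely requires Hessian control on a set containing the whole segment, which is exactly what your compactness argument (lower density bound for both solutions, upper bounds from Lemma~\ref{Lemma1} and the $W^{1,\infty}$ regularity of $\tvU$) supplies; the reduction of positive definiteness of the full Hessian to that of $\nabla^2 H$ via the rank-deficient kinetic block is also sound. Two minor remarks: your value $\det\nabla^2 H=(\gamma-1)\vr^{2(\gamma-2)}\exp\bigl(2(\gamma-1)\eta/\vr\bigr)=\vt^2/(C_v\vr^2)$ agrees with the determinant of the paper's matrix \eqref{DE2} (whose diagonal entries appear transposed relative to a direct computation, without affecting the conclusion), and you should note explicitly that boundedness of $\eta_h$ follows from the bounds on $\vr_h$ and $p_h$ in Lemma~\ref{Lemma1}, since $\eta_h=C_v\vr_h\ln(p_h/\vr_h^\gamma)$ is not listed there verbatim.
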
 
\begin{proof}
First, taking the derivatives of $ \vr e  $ with respect to $ ( \vr , \eta)$
 we obtain
\begin{equation}\label{DE1}
 \partial_{\vr} \left( \vr e \right) =\left(1+C_v \right)  \vt- \frac{\eta \vt}{\vr} , \quad
\partial_{\eta} \left( \vr e \right) = \vt. 
\end{equation}
Further, by the product rule and Gibbs' relation \eqref{Gibbs} we derive
\begin{equation}\label{DE3}
\DD \vt =\frac{\vt}{C_v \vr} \left(1 - \frac{\eta}{\vr}\right)  \DD \vr  + \frac{\vt}{C_v \vr} \DD \eta,
\end{equation}
and 
\begin{equation}\label{DE4}
\DD \left( (1+C_v)\vt - \frac{\vt \eta}{\vr} \right) = \left( (1+C_v) - \frac{\eta}{\vr} \right) \DD \vt - \frac{\vt}{\vr} \DD \eta + \frac{\vt \eta}{\vr^2} \DD \vr,
\end{equation}
which leads to
\begin{equation}\label{DE2}
\nabla_{(\vr,\eta)}^2 (\vr e) = \frac{\vt}{C_v \vr} 
\begin{pmatrix}
1 & 1-\frac{\eta}{\vr} \\
1-\frac{\eta}{\vr} & C_v + \left( 1-\frac{\eta}{\vr} \right)^2
\end{pmatrix}.
\end{equation}
As $(\vr, \vu, \eta) $ is the strong solution we know that $\nabla_{(\vr,\eta)}^2 (\vr e) |_{(\tvr, \tvS)}$ is symmetric positive definite and bounded from below and above, which  implies
\begin{align} \label{REL2}
\E  \left( \vrh,  \mh, \eta_h \Big| \tvr, \tvu, \tvS \right) 
& \approx  |\vu_h - \tvu|^2 +  |\eta_h - \tvS|^2 + |\vrh - \tvr|^2.
\end{align}
Next, we recall Assumption \ref{H1} and  the uniform upper bound of $\vuh$ due to Lemma~\ref{Lemma1} to conclude that 
\begin{align*}
&|\vm_h - \tvm|^2  \leq |\vrh(\vuh - \tvu)|^2 + |(\vr_h - \tvr) \tvu)|^2   \lesssim  |\vuh - \tvu|^2 +  |\vrh - \tvr|^2,\\
& |\vuh - \tvu|^2 \lesssim |\tvr(\vuh - \tvu)|^2  \lesssim
 |\mh - \tvm|^2 + |\vu_h(\tvr - \vrh)|^2 \lesssim
 |\vm_h - \tvm|^2 + |\vrh - \tvr|^2  .
 \end{align*}
Substituting the above two inequalities into \eqref{REL2} we finish the proof. 
\end{proof}
Lemma~\ref{LMEQ} means that the $L^1$-norm of $\E  \left( \vrh,  \mh, \eta_h \Big| \tvr, \tvu, \tvS \right) $ is equivalent to the $L^2$-norm of the errors in   $(\vrh-\tvr, \mh-\tvm, \eta_h-\tvS)$ as long as the entropy stable numerical solution $( \vrh,  \mh, \eta_h)$ satisfies Assumption \ref{H1} and  $(\tvr, \tvu, \tvS )$ is the strong solution of the Euler system in the sense of Definition~\ref{DefS}.

\section{Error estimates}\label{sec_ee}
Equipped with consistency formulation of the Godunov method we are now ready to estimate the relative energy in the $L^1$-norm and error between the numerical solution $( \vrh, \mh, \eta_h )$  and the strong solution $( \tvr, \tvu, \tvS )$ in the $L^2$-norm. 

\begin{Theorem}[Error estimates]\label{Th2}
Let $\Omega \subset \mathbb{R}^d$, $d=1,2,3,$ be a bounded domain with a boundary $\partial \Omega \in C^1$. 
Let $( \tvr, \tvu, \tvS )$ be the strong solution of the complete Euler system \eqref{pde}  in the sense of Definition \ref{DefS} with initial data \eqref{ini} satisfying 
\begin{equation*}
\| \vU_{h0} -\vU_0\|_{L^2(\Omega)} \lesssim h^{1/2} 
\end{equation*}
 and the impermeability boundary condition \eqref{I5}. 

Suppose that $( \vrh, \vmh, \eta_h )$ is the numerical solution obtained by the Godunov method \eqref{eq:semi-discrete-RPflux-phi}.
Let Assumption \ref{H1} hold.
Then the following estimate of the relative energy holds for any $\tau\in(0,T]$ 
\begin{equation}\label{ECR}
\intO{{\E} \left( \vrh,  \mh, \eta_h \Big| \tvr, \tvu, \tvS \right)(\tau, \cdot) }  \\
\lesssim \exp \left( \tau \; c\left( \Omega, \| \tvU \|_{W^{1, \infty}((0,T) \times \Omega; R^d)} \right) \right)  h^{1/2}. 
\end{equation}
\end{Theorem}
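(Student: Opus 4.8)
The plan is to establish a relative energy inequality à la Dafermos and then close it with a Gronwall argument, using the consistency formulation of Theorem~\ref{Th1} to absorb the discretization errors. First I would pick the strong solution $(\tvr,\tvu,\tvS)$ as a test "state" and compute the time evolution of
\[
\mathcal{R}(\tau) := \intO{ \E\left(\vrh,\mh,\eta_h \Big| \tvr,\tvu,\tvS\right)(\tau,\cdot)}.
\]
Writing $\E$ out via \eqref{RE}, its time derivative splits into three groups of terms: those coming from the numerical equations \eqref{CE1}--\eqref{CE4} (tested against the smooth functions $\phi = \partial_\vr(\vr e)|_{(\tvr,\tvS)}$, $\bfphi = -\tvu$, $\phi = \partial_\eta(\vr e)|_{(\tvr,\tvS)} = \tvt$, together with the exact energy balance \eqref{CE4}); those coming from differentiating $-\tvr\tve$ and the linear-in-$(\vrh,\eta_h)$ correction terms in time, which I would rewrite using the strong-solution identities \eqref{EQS} and Gibbs' relation \eqref{Gibbs}; and the consistency error terms $\int_0^\tau e_{j,h}(t,\cdot)\dt$. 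Because $\tvr,\tvu,\tvS \in W^{1,\infty}$ and are bounded below away from zero by \eqref{H2}, all the test functions $\phi,\bfphi$ above lie in $W^{1,\infty}((0,T)\times\Omega)$ with norm controlled by $\|\tvU\|_{W^{1,\infty}}$, so Theorem~\ref{Th1} applies and the error contributions are bounded, using \eqref{CE5}, by
\[
h^{1/2}\,c\!\left(\Omega,\|\tvU\|_{W^{1,\infty}}\right)\left(\int_0^\tau \sum_{\sigma\in\inerface}\int_\sigma |\jump{\vU_h}|^2 \dsx\dt\right)^{1/2}.
\]

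The key algebraic step is that, after substituting the strong-solution equations, the "exact" part of $\frac{d}{dt}\mathcal{R}$ collapses — by the usual relative-energy cancellations — into (i) a nonpositive entropy-production/numerical-dissipation term, which controls (up to a constant) exactly the jump integral $\sum_\sigma \int_\sigma |\jump{\vU_h}|^2\dsx$ appearing above, plus (ii) a remainder that is pointwise bounded by $\|\nabla_{t,x}\tvU\|_{L^\infty}$ times the relative energy integrand itself. Concretely I expect
\[
\mathcal{R}(\tau) + \delta\int_0^\tau\!\!\sum_{\sigma\in\inerface}\int_\sigma|\jump{\vU_h}|^2\dsx\dt
\;\le\; \mathcal{R}(0) + c\!\left(\Omega,\|\tvU\|_{W^{1,\infty}}\right)\!\int_0^\tau \mathcal{R}(t)\dt + (\text{error terms}),
\]
for some $\delta>0$. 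The dissipation term on the left then lets me apply Young's inequality to the error term on the right: $h^{1/2}(\int\!\!\int|\jump{\vU_h}|^2)^{1/2} \le \tfrac{\delta}{2}\int\!\!\int|\jump{\vU_h}|^2 + C h$, so the jump integral is absorbed and what survives on the right is $Ch \le C h^{1/2}$. The initial term $\mathcal{R}(0)$ is $\lesssim \|\vU_{h0}-\vU_0\|_{L^2}^2 \lesssim h$ by Lemma~\ref{LMEQ} (applied at $t=0$, using that $\vU_{h0}$ satisfies Assumption~\ref{H1}) and the hypothesis on the initial error. Finally Gronwall's lemma gives $\mathcal{R}(\tau) \lesssim \exp(\tau\, c(\Omega,\|\tvU\|_{W^{1,\infty}}))\, h^{1/2}$, which is \eqref{ECR}.

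The main obstacle, and the step requiring the most care, is the bookkeeping in (i)--(ii) above: one must verify that every term produced by differentiating $\E$ in time and inserting the discrete equations either cancels against a term from \eqref{EQS}, or is manifestly nonpositive, or is dominated by the relative-energy integrand — and in particular that the quadratic-in-jump term genuinely has a favorable sign so it can be paired with the consistency error via Young's inequality. This is where the convexity of $\vr e$ in $(\vr,\eta)$ — quantified by \eqref{DE2} and Lemma~\ref{LMEQ} — and the entropy inequality \eqref{CE3} (used with the sign-definite test function $\tvt>0$) both enter essentially. A secondary technical point is handling the impermeability boundary condition: since $\tvu\cdot\vn = 0$ on $\partial\Omega$ and the Godunov scheme respects no-flux boundaries, the boundary terms arising from integration by parts against $\bfphi=-\tvu$ vanish, but this must be checked against the precise form of \eqref{eq:semi-discrete-RPflux-phi}.
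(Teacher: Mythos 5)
Your overall architecture --- test the consistency formulation with the strong solution, reduce via the identities \eqref{EQS} and Gibbs' relation, bound the surviving bulk terms by the relative energy itself, and close with Gronwall --- is exactly the paper's proof; your treatment of the initial term via Lemma~\ref{LMEQ} and of the remainder (your item (ii)) matches \eqref{RE2}. The one step that deviates, and where there is a genuine gap, is your handling of the consistency error. You posit that the relative-energy balance produces a sign-definite dissipation term $\delta\int_0^\tau\sum_{\sigma\in\inerface}\int_\sigma|\jump{\vU_h}|^2\,\dsx\dt$ on the left-hand side, against which you absorb the error $h^{1/2}\bigl(\int_0^\tau\sum_\sigma\int_\sigma|\jump{\vU_h}|^2\,\dsx\dt\bigr)^{1/2}$ by Young's inequality. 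No such term is available at the level of Theorem~\ref{Th1}: the consistency formulation has already discarded the scheme's dissipation into the residuals $e_{j,h}$ and into the inequality sign of \eqref{CE3}, so after inserting \eqref{EQS} the exact part of the balance collapses to the three terms of \eqref{RE2}, all controlled by $c(\|\tvU\|_{W^{1,\infty}})\,\E$, with nothing quadratic in the jumps and of favorable sign left over. A sanity check confirms something is off: if your absorption worked, the error contribution would be $O(h)$, and combined with $\mathcal{R}(0)\lesssim h$ Gronwall would deliver a first-order rate --- i.e., Theorem~\ref{Th3} without its extra hypothesis \eqref{TVB1}, which the paper does not claim.

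The repair is simpler than your mechanism: the second inequality in \eqref{CE5}, together with the uniform ($h$-independent) bound on $\int_0^\tau\sum_{\sigma\in\inerface}\int_\sigma|\jump{\vU_h}|^2\,\dsx\dt$ that comes with Theorem~\ref{Th1} (the weak-BV/dissipation estimate established in the cited consistency analysis; this is what ``bounded errors'' means there), already gives $\|e_{j,h}\|_{L^1(0,T)}\lesssim h^{1/2}\|\phi\|_{W^{1,\infty}}$ directly. Inserting this into your Gronwall inequality yields \eqref{ECR} with the claimed rate $h^{1/2}$. Everything else in your outline --- the choice of test functions, the vanishing of boundary terms under \eqref{I5}, and the equivalence of $\E$ with the squared $L^2$ errors --- is consistent with the paper's argument.
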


\begin{proof}
We prove \eqref{ECR} in two steps: 
\begin{itemize}
\item Viewing $( \tvr, \tvu, \tvS )$ as the test function in the consistency formulation, we derive the relative energy inequality between $( \vrh, \mh, \eta_h )$ and $( \tvr, \tvu, \tvS )$;
\item Approximating the above inequality such that all  terms on the right hand side can be bounded by the discretization parameter $h$ or by the relative energy, we finally estimate the relative energy by Gronwall's lemma. 
\end{itemize}
\paragraph{Step 1.}
Rewriting the relative energy \eqref{RE} into a more convenient form we obtain
\begin{equation}\label{RE1}
\begin{aligned}
{E} \left( \vr_h, \vm_h, \eta_h \Big| \tvr, \tvu, \tvS \right) 
&= ~ \frac{1}{2} \vr_h \left| \frac{\vm_h}{\vr_h} - \tvu \right|^2  + \vr_h e_h
- \left( (1+C_v) \tvt - \frac{\tvt \tvS}{\tvr} \right) (\vr_h - \tvr) - \tvt (\eta_h - \tvS ) -  \tvr \tve 
\\& = 
\left[ \frac{1}{2} \frac{|\vm_h|^2}{\vr_h} + \vr_h e_h \right] + \vr_h  \left[ \frac{1}{2} |\tvu|^2 -  (1+C_v)\tvt + \frac{\tvt \tvS}{\tvr} \right]  - \vm_h \cdot \tvu- \eta_h  \tvt + \tvp. 
\end{aligned}
\end{equation}
First, we take $\frac{1}{2} |\tvu|^2 -  (1+ C_v) \tvt + \frac{\tvt \tvS}{\tvr}$ as the test function in consistency formulation of the density equation \eqref{CE1} to derive 
\begin{align*}
&\left[ \int_{\Omega} \vrh \left(\frac{1}{2} |\tvu|^2 -  (1+ C_v) \tvt + \frac{\tvt \tvS}{\tvr}\right) ~\dx  \right]_{t = 0}^{t = \tau} 
=   \int_{0}^{\tau} \int_{\Omega} \Bigg( \vrh \partial_t \left(\frac{1}{2} |\tvu|^2 -  (1+ C_v) \tvt + \frac{\tvt \tvS}{\tvr}\right) 
\\ &  \quad +   \mh \cdot \Grad  \left(\frac{1}{2} |\tvu|^2 -  (1+ C_v) \tvt + \frac{\tvt \tvS}{\tvr}\right) \Bigg)  \dxdt + \int_{0}^{\tau} e_{\vr,h}(t,\tvU) \dt.
\end{align*}
Analogously, we set $\tvu$ and $\tvt$ respectively as the test functions in consistency formulations of the momentum equation \eqref{CE2} and entropy inequality \eqref{CE3} to get 
\begin{align*}
\left[ \int_{\Omega} \mh \cdot \tvu ~\dx  \right]_{t = 0}^{t = \tau} 
=~\int_{0}^{\tau} \intOB{ \mh \cdot \partial_t \tvu +   \frac{\mh \otimes\mh }{\vrh} : \Grad  \tvu  + p_h  \Div  \tvu  } \dt + \int_{0}^{\tau} e_{\vm,h}(t,\tvU) \dt ,
\end{align*}
and
\begin{align*}
\left[ \int_{\Omega} \eta_h \tvt ~\dx  \right]_{t = 0}^{t = \tau} 
\geq ~\int_{0}^{\tau} \intOB{ \eta_h \partial_t \tvt +  \eta_h \frac{\mh}{\vrh} \cdot \Grad  \tvt }\dt + \int_{0}^{\tau} e_{\eta,h}(t,\tvU) \dt.
\end{align*}
Then, we  combine the above three formulae together with the energy equality \eqref{CE4} and find
\begin{equation}\label{RE1}
\begin{aligned}
& \left[ \intO{\E  \left( \vrh,  \mh, \eta_h \Big| \tvr, \tvu, \tvS \right)(t, \cdot) } \right]_{t = 0}^{t = \tau}  
\leq  - \int_0^\tau \intO{  \frac{(\vrh \tvu - \mh) \otimes (\vrh \tvu - \mh) }{\vrh} : \Grad \tvu } \dt \\ 
	&+ \int_0^\tau \intO{\big( ( \tvp - p_h ) \Div \tvu  +  (\partial_t \tvp + \tvu \cdot \Grad \tvp) \big)} \dt    \\
	&+ \int_0^\tau \intO{ (\vrh \tvu - \mh ) \cdot \left[ \partial_t \tvu + \tvu \cdot \Grad \tvu +  \frac{1}{\tvr} \Grad \tvp  \right]} \dt  \\
	& - \int_0^\tau \intOB{ \vrh \partial_t \left( (1+ C_v)\tvt - \frac{\tvt \tvS}{\tvr} \right) + \mh \cdot \Grad \left( (1+ C_v)\tvt - \frac{\tvt \tvS}{\tvr} \right) } \dt  \\
	& - \int_0^\tau \intOB{ \eta_h \partial_t \tvt + \eta_h \frac{\mh}{\vrh} \cdot \Grad \tvt  + (\vrh \tvu - \mh) \frac{1}{\tvr} \Grad \tvp } \dt  \\
	& + \int_{0}^{\tau} \left( e_{\vr,h}(t,\tvU) - e_{\vm,h}(t,\tvU) - e_{\eta,h}(t,\tvU)\right) \dt, 
\end{aligned}
\end{equation}
where we have used the following identities
\begin{align*}
 \tvu \otimes \tvu : \Grad \tvu  =  \tvu \cdot (\tvu \cdot \Grad \tvu), \qquad 
\intO{ \tvu \cdot\Grad \tvp }  = - \intO{ \tvp \,\Div \tvu } , 
\\
\frac{(\vrh \tvu - \mh) \otimes (\vrh \tvu - \mh) }{\vrh} : \Grad \tvu = ~ \vrh \tvu \otimes \tvu : \Grad \tvu + \frac{\mh \otimes \mh}{\vrh} : \Grad \tvu   
\\  - \mh \cdot (\tvu \cdot \Grad) \tvu  -  \tvu \cdot \left( \mh\cdot \Grad \right) \tvu  .
\end{align*}
Further, employing the relations \eqref{DE3} and \eqref{DE4}  we can reformulate \eqref{RE1}  as  
\begin{equation}\label{RE1-1}
\setlength\abovedisplayskip{2pt}
\setlength\belowdisplayskip{0pt}
\begin{aligned}
& \left[ \intO{{\E} \left( \vrh,  \mh, \eta_h \Big| \tvr, \tvu, \tvS \right)(t, \cdot) } \right]_{t = 0}^{t = \tau}   \\
\leq &~ - \int_0^\tau \intO{  \frac{(\vrh \tvu - \mh) \otimes (\vrh \tvu - \mh) }{\vrh} : \Grad \tvu } \dt  \\ 
&- \int_0^\tau \intO{ \Big[ p_h - \tvp - \partial_{\tvr} \tvp   (\vrh - \tvr)- \partial_{\tvS} \tvp   (\eta_h - \tvS) \Big] \Div \tvu } \dt    \\
&+ \int_0^\tau \intO{ (\vrh \tvu - \mh ) \cdot \left[ \partial_t \tvu +
	\tvu \cdot \Grad \tvu +  \frac{1}{\tvr} \Grad \tvp  \right]} \dt  \\
& + \int_0^\tau \intO{ \left[ \left(1- \frac{\vrh}{\tvr} \right)\partial_{\tvr} \tvp  -  \left(\eta_h- \frac{\vrh}{\tvr} \tvS \right)\partial_{\tvr} \tvt\right]   \left[ \partial_t \tvr + \tvu \cdot \Grad \tvr +  \tvr \, \Div  \tvu \right] } \dt  \\
& + \int_0^\tau \intO{ \left[ \left(1- \frac{\vrh}{\tvr} \right)\partial_{\tvS} \tvp  -  \left(\eta_h- \frac{\vrh}{\tvr} \tvS \right)\partial_{\tvS} \tvt\right]   \left[ \partial_t \tvS + \tvu \cdot \Grad \tvS +  \tvS \, \Div  \tvu \right] } \dt  \\
& - \int_0^\tau \intO{  \left( \eta_h - \frac{\vrh}{\tvr} \tvS \right)  \left(\frac{\mh}{\vrh} - \tvu \right) \cdot \Grad \tvt } \dt  \\
& + \int_{0}^{\tau} \left( e_{\vr,h}(t,\tvU) - e_{\vm,h}(t,\tvU) - e_{\eta,h}(t,\tvU)  \right) \dt,
\end{aligned} 
\end{equation}
where we have denoted $\partial_{\tvr} \tvp:= \frac{\pd p(\tvr,\tvS)}{\pd \vr}$ and the definitions of $\partial_{\tvS} \tvp$,  
$\partial_{\tvS} \tvt$ and $\partial_{\tvS} \tvt$ are analogous. 

Then applying the equalities stated in \eqref{EQS} to \eqref{RE1-1} we have
\begin{equation}\label{RE2}
\begin{aligned}
& \left[ \intO{\E  \left( \vrh,  \mh, \eta_h \Big| \tvr, \tvu, \tvS \right)(t, \cdot) } \right]_{t = 0}^{t = \tau}   \\
& \leq  - \int_0^\tau \intO{  \frac{(\vrh \tvu - \mh) \otimes (\vrh \tvu - \mh) }{\vrh} : \Grad \tvu } \dt  \\ 
&\quad - \int_0^\tau \intO{ \Big[ p_h - \tvp - \partial_{\tvr} \tvp  (\vrh - \tvr)- \partial_{\tvS} \tvp  (\eta_h - \tvS) \Big] \Div \tvu } \dt    \\
&\quad - \int_0^\tau \intO{  \left( \eta_h - \frac{\vrh}{\tvr} \tvS \right)  \left(\frac{\mh}{\vrh} - \tvu \right)\cdot \Grad \tvt } \dt  \\
&\quad + \int_{0}^{\tau} \left( e_{\vr,h}(t,\tvU) - e_{\vm,h}(t,\tvU) - e_{\eta,h}(t,\tvU) \right) \dt.
\end{aligned}
\end{equation}

\paragraph{Step 2.} In this step we shall estimate the right hand side of the inequality \eqref{RE2} and complete the proof by Gronwall's lemma. 
We begin with the following observation owing to the uniform bounds on $\tvr$, $\tvt$ and $\tvS$, as well as \eqref{REL2}
\begin{align*}
& \left|\left( \eta_h - \frac{\vrh}{\tvr} \tvS \right) \cdot \left(\frac{\mh}{\vrh} - \tvu \right)\right| 
 \lesssim   \left|\ \eta_h - \frac{\vrh}{\tvr} \tvS \right|^2 + \left| \frac{\mh}{\vrh} - \tvu \right|^2 
 \\&
\leq \left|\ \eta_h -\tvS \right|^2+ \left|\tvS- \frac{\vrh}{\tvr} \tvS \right|^2 + \left| \frac{\mh}{\vrh} - \tvu \right|^2
 = \left|\ \eta_h -\tvS \right|^2+ \left|\tvr- \vrh \right|^2 \left( \frac{\tvS}{\tvr}  \right)^2 + \left| \frac{\mh}{\vrh} - \tvu \right|^2
 \\& 
 \lesssim  ~  |\eta_h - \tvS|^2 + |\vrh - \tvr|^2+  \left|\frac{\mh}{\vrh} - \tvu\right|^2 
\lesssim \E  \left( \vrh,  \mh, \eta_h \Big| \tvr, \tvu, \tvS \right). 
\end{align*}
Hence, we may estimate \eqref{RE2} in the following way
\begin{multline}\label{RE3}
 \left[ \intO{\E  \left( \vrh,  \mh, \eta_h \Big| \tvr, \tvu, \tvS \right)(t, \cdot) } \right]_{t = 0}^{t = \tau} 
  \leq   c(\norm{\tvU}_{W^{1,\infty}((0,T) \times \Omega;R^{d+2})}) h^{1/2}   \\
+  c\left( \Omega, \| \tvU \|_{W^{1, \infty}((0,T) \times \Omega; R^{d+2})} \right) \int_0^\tau \intO{\E  \left( \vrh,  \mh, \eta_h \Big| \tvr, \tvu, \tvS \right)(t, \cdot) }  \dt,  
\end{multline}
where we have recalled the consistency error stated in Theorem~\ref{Th1}. 

Further, applying Gronwall's lemma and recalling the projection error for piecewise constant functions 
\begin{equation*}
\norm{\tvU- \Pi_h \tvU}_{L^\infty} \aleq c(\norm{\tvU}_{W^{1,\infty}((0,T)\times \Omega; R^{d+2}) }) h
\end{equation*}
we conclude the proof, i.e. 
\begin{align*}
& \intO{\E  \left( \vrh,  \mh, \eta_h \Big| \tvr, \tvu, \tvS \right)(\tau, \cdot) }  
\leq  c(\norm{\tvU}_{W^{1,\infty}((0,T) \times \Omega;R^{d+2})}) h^{1/2}  
\\ &\quad + \exp \left( \tau \; c\left( \Omega, \| \tvU \|_{W^{1, \infty}((0,T) \times \Omega; R^d)} \right) \right) \intO{\E  \left( \vr_{h0},  \vm_{h0}, \eta_{h0} \Big| \vr_0, \vm_0/\vr_0, \eta_0 \right) }  
\\& \lesssim   h^{1/2}  \exp \left( \tau \; c\left( \Omega, \| \tvU \|_{W^{1, \infty}((0,T) \times \Omega; R^d)} \right) \right)  . 
\end{align*}
\end{proof}

We directly obtained the following a priori error estimates in the $L^2$-norm.
\begin{Proposition}\label{PR1}
Under the same condition as Theorem~\ref{Th2} it holds for any $\tau \in (0,T)$ 
\begin{equation}
\|\vr_h - \tvr\|_{L^2(\Omega)} \lesssim h^{1/4}, \quad
\|\vm_h - \tvm\|_{L^2(\Omega)} \lesssim h^{1/4}, \quad
\|\eta_h - \tvS\|_{L^2(\Omega)} \lesssim h^{1/4}.
\end{equation}
\end{Proposition}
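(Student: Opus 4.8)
The plan is to obtain Proposition~\ref{PR1} as an immediate corollary of Theorem~\ref{Th2} and Lemma~\ref{LMEQ}, so no new machinery is needed; the work consists entirely of combining the relative-energy bound with the pointwise equivalence between the relative energy and the squared errors.

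First I would recall that, under the hypotheses of Theorem~\ref{Th2} (which are precisely the hypotheses assumed here), estimate \eqref{ECR} gives, for every $\tau\in(0,T]$,
\[
\intO{\E\left(\vrh,\mh,\eta_h \Big| \tvr,\tvu,\tvS\right)(\tau,\cdot)} \lesssim h^{1/2},
\]
where the implicit constant is $\exp\!\big(\tau\,c(\Omega,\|\tvU\|_{W^{1,\infty}})\big)$ and in particular does not depend on $h$. Next I would invoke Lemma~\ref{LMEQ}, which under Assumption~\ref{H1} (so that Lemma~\ref{Lemma1} supplies the uniform-in-$h$ bounds on $\vrh,\vuh,\vt_h$ used in its proof) and for $(\tvr,\tvu,\tvS)$ a strong solution in the sense of Definition~\ref{DefS}, yields the pointwise equivalence
\[
\E\left(\vrh,\mh,\eta_h \Big| \tvr,\tvu,\tvS\right) \approx |\vmh - \tvm|^2 + |\eta_h - \tvS|^2 + |\vrh - \tvr|^2
\]
with equivalence constants independent of $h$.

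Integrating this equivalence over $\Omega$ and using the displayed bound then gives
\[
\|\vrh - \tvr\|_{L^2(\Omega)}^2 + \|\vmh - \tvm\|_{L^2(\Omega)}^2 + \|\eta_h - \tvS\|_{L^2(\Omega)}^2 \approx \intO{\E\left(\vrh,\mh,\eta_h \Big| \tvr,\tvu,\tvS\right)(\tau,\cdot)} \lesssim h^{1/2}.
\]
Since each of the three nonnegative terms on the left is individually dominated by the sum, each is $\lesssim h^{1/2}$, and taking square roots yields $\|\vrh - \tvr\|_{L^2(\Omega)}$, $\|\vmh - \tvm\|_{L^2(\Omega)}$, $\|\eta_h - \tvS\|_{L^2(\Omega)} \lesssim h^{1/4}$, which is the assertion.

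There is essentially no technical obstacle here: the only points deserving care are (i) that the equivalence constants in Lemma~\ref{LMEQ} are genuinely uniform in $h$ — guaranteed by Assumption~\ref{H1} together with Lemma~\ref{Lemma1} — and (ii) that the constant in \eqref{ECR} is $h$-independent, which it is, since it is built only from $T$, $\Omega$ and $\|\tvU\|_{W^{1,\infty}((0,T)\times\Omega)}$. With these two observations the proof is complete.
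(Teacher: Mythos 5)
Your argument is correct and is precisely the one the paper intends: Proposition~\ref{PR1} is stated as a direct consequence of the relative-energy bound \eqref{ECR} in Theorem~\ref{Th2} combined with the equivalence \eqref{REL2-1} of Lemma~\ref{LMEQ}, followed by integration over $\Omega$ and taking square roots. Your two cautionary remarks about $h$-independence of the constants are exactly the points that make the deduction legitimate.
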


In what follows we prove the first order convergence rate in terms of the relative energy under an additional assumption of bounded total variation for numerical solutions.
\begin{Theorem}\label{Th3}
In addition to the assumptions of Theorem~\ref{Th2}, we assume that
\begin{equation} \label{TVB1}
\sum_{\sigma \in \inerface} \int_{\sigma} |\jump{\vU_h}|_{\sigma}  \dsx   \lesssim 1. 
\end{equation}
Then there hold 
\begin{equation*}
\intO{\E  \left( \vrh,  \mh, \eta_h \Big| \tvr, \tvu, \tvS \right)(\tau, \cdot) }  \\
\lesssim h \exp \left( \tau \; c\left( \Omega, \| \tvU \|_{W^{1, \infty}((0,T) \times \Omega; R^d)} \right) \right) .
\end{equation*}
\end{Theorem}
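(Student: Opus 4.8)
The plan is to rerun the proof of Theorem~\ref{Th2} almost verbatim; the exponent $1/2$ there appeared only in passing from the first to the second line of the consistency bound \eqref{CE5}, and \eqref{TVB1} is exactly what lets us skip that passage. Concretely, I would first revisit the consistency error. Keeping the \emph{first} line of \eqref{CE5} and invoking \eqref{TVB1}, read as a bound uniform in $t\in[0,T]$, one gets for $j\in\{\vr,\vm,\eta\}$
\begin{align*}
\|e_{j,h}\|_{L^1(0,T)} &\lesssim c(\norm{\tvU}_{W^{1,\infty}((0,T)\times\Omega)})\, h \int_0^T\sum_{\sigma\in\inerface}\int_\sigma \abs{\jump{\vU_h}} \dsx \dt \\
&\lesssim c(\norm{\tvU}_{W^{1,\infty}((0,T)\times\Omega)})\, h ,
\end{align*}
the relevant test functions being $\tfrac12|\tvu|^2-(1+C_v)\tvt+\tfrac{\tvt\tvS}{\tvr}$, $\tvu$ and $\tvt$, which lie in $W^{1,\infty}((0,T)\times\Omega)$ because $(\tvr,\tvu,\tvS)$ does and $\tvr,\tvt$ are bounded below by \eqref{H2}; the constant $c(\cdot)$ absorbs the $W^{1,\infty}$-norms of those test functions.

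Next I would feed this improvement into the two steps of the proof of Theorem~\ref{Th2}. Step~1 is unchanged: the derivation of the relative energy inequality \eqref{RE2} is insensitive to the size of $e_{\vr,h},e_{\vm,h},e_{\eta,h}$. In Step~2, every term on the right of \eqref{RE2} other than the consistency errors was already absorbed into a constant multiple of $\int_0^\tau\intO{\E\left(\vrh,\mh,\eta_h\,|\,\tvr,\tvu,\tvS\right)(t,\cdot)}\dt$ — the convective term via \eqref{REL2} and the bound $\vrh\le\overline{\vr}$ of Lemma~\ref{Lemma1}, the bracket $p_h-\tvp-\partial_{\tvr}\tvp\,(\vrh-\tvr)-\partial_{\tvS}\tvp\,(\eta_h-\tvS)$ as a second-order Taylor remainder of the pressure on the compact state range of Lemma~\ref{Lemma1}, and the last term exactly as in the display preceding \eqref{RE3}. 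Hence, replacing in \eqref{RE3} the consistency contribution $c(\norm{\tvU}_{W^{1,\infty}})\,h^{1/2}$ by the sharper $c(\norm{\tvU}_{W^{1,\infty}})\,h$ obtained above, one reaches
\begin{multline*}
\left[\intO{\E\left(\vrh,\mh,\eta_h\,\Big|\,\tvr,\tvu,\tvS\right)(t,\cdot)}\right]_{t=0}^{t=\tau}
\le c(\norm{\tvU}_{W^{1,\infty}})\,h \\
+ c(\Omega,\norm{\tvU}_{W^{1,\infty}})\int_0^\tau\intO{\E\left(\vrh,\mh,\eta_h\,\Big|\,\tvr,\tvu,\tvS\right)(t,\cdot)}\dt .
\end{multline*}

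Finally, the initial relative energy satisfies $\intO{\E(\vr_{h0},\vm_{h0},\eta_{h0}\,|\,\vr_0,\vm_0/\vr_0,\eta_0)}\approx\norm{\vU_{h0}-\vU_0}_{L^2(\Omega)}^2\lesssim h$ by Lemma~\ref{LMEQ} and the hypothesis $\norm{\vU_{h0}-\vU_0}_{L^2(\Omega)}\lesssim h^{1/2}$ carried over from Theorem~\ref{Th2}, so Gronwall's lemma gives
\[
\intO{\E\left(\vrh,\mh,\eta_h\,\Big|\,\tvr,\tvu,\tvS\right)(\tau,\cdot)} \lesssim h\,\exp\left(\tau\,c\left(\Omega,\norm{\tvU}_{W^{1,\infty}((0,T)\times\Omega;R^d)}\right)\right),
\]
which is the claim (and, through Lemma~\ref{LMEQ}, upgrades the rates in Proposition~\ref{PR1} to $h^{1/2}$). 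The only genuinely new input is the sharpened consistency bound, which is immediate from the first line of \eqref{CE5}, so I do not expect a real obstacle; the one point that needs care is that \eqref{TVB1} be read uniformly in $t\in[0,T]$, so that the time integration over $(0,\tau)$ costs only a factor $\tau\le T$, with the constant implicit in \eqref{TVB1} — and likewise the final constant — allowed to depend on $T$ but not on $h$.
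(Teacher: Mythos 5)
Your proposal is correct and follows essentially the same route as the paper: the paper's proof likewise consists of noting that \eqref{TVB1} applied to the first line of \eqref{CE5} upgrades the consistency error to $\|e_{j,h}\|_{L^1(0,T)} \lesssim h \| \phi \|_{W^{1,\infty}((0,T)\times {\Omega})}$, and then rerunning the Gronwall argument of Theorem~\ref{Th2} with $h$ in place of $h^{1/2}$. Your additional remarks — that the remaining terms in \eqref{RE2} are unaffected, that \eqref{TVB1} is read uniformly in time so the time integration only costs a factor $T$, and that the initial relative energy is already $O(h)$ — are exactly the details the paper leaves implicit.
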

\begin{proof}
With \eqref{TVB1} the consistency error can be estimated and improved by
\begin{equation*}
\|e_{j,h}(\phi)\|_{L^1(0,T)} \lesssim h \| \phi \|_{W^{1,\infty}((0,T)\times {\Omega})} , 
\end{equation*}
which concludes the proof.
\end{proof}

\begin{Remark}
Here we point out that the assumption \eqref{TVB1} is slightly weaker than the assumption used in \cite{JoRo}
\begin{equation} \label{TVB2}
\sum_{\sigma \in \inerface} \int_{\sigma} \frac{| \jump{\vU_h}| ^2}{h}  \dsx   \lesssim 1. 
\end{equation}

Moreover, for the case of $d=1$ the assumption \eqref{TVB1} is exactly the TVB condition, which is a known property for the Godunov method. 
\end{Remark}

\begin{Remark}
Let us consider piecewise constant initial data which generate finitely many rarefaction waves. It is obvious that such kind of initial data fulfills the condition $\| \vU_{h0} - \vU_0  \|_{L^2(\Omega)} \lesssim h^{1/2} $ assumed in Theorem~\ref{Th2}. 
Moreover, we can expect \eqref{TVB1} or \eqref{TVB2} to hold, which consequently implies Theorem~\ref{Th3}. 
Thus, $\| \vU_h - \tvU \|_{L^2(\Omega)} \lesssim h^{1/2} $ holds for any $\tau\in(0,T)$.

\end{Remark}



\section{Numerical experiments}\label{sec_exp}
In this section we simulate several one- and two-dimensional Riemann problems.  
The examples only containing rarefaction waves are used to validate our theoretical results.  
In addition, we also test examples containing contact waves or shock waves or both and compute experimentally convergence rates.
We point out that in our simulations there is no projection error of initial data due to these simple Riemann problems and good uniform meshes.

In the following we calculate the relative energy in the $L^1$-norm and 
 the errors  of $(\vr, \vm,\eta)$ in the $L^2$-norm. 
In addition to the Godunov method, we also test the convergence rates of the viscosity finite volume (VFV) method  originally introduced and studied by Feireisl et al.~\cite{FLM}. 
In our numerical tests, we  take $\gamma=1.4$ and $\rm CFL=0.9$ for the Godunov method while $\rm CFL=0.3$ is used for the VFV method. 
Unless otherwise specified,  the errors of $(\vr,\vm,\eta), \E$ mean the $L^2$-error of $(\vr,\vm,\eta)$ and the $L^1$-norm of the relative energy  $\E$; the convergence rates of $(\vr,\vm,\eta), \E$ mean the convergence rate of the $L^2$-error of $(\vr,\vm,\eta)$ and the $L^1$-norm of $\E$.

\subsection{One dimensional experiments}\label{sec_exp:1D}
We start with one dimensional Riemann problems in the computational domain $\Omega=[0,1]$. 
Here, the solution $\tvU$ in the relative energy is taken as the reference (exact) solution computed on the uniform mesh with $20480$ cells.

\begin{Example}[{\tt 1D single wave}] \label{example:1D-singlewave} \rm
This example is used to measure the convergence rate of three different types of waves --  a single contact ({\tt C}) wave, a single rarefaction ({\tt R})  wave and a single shock ({\tt S}) wave. 

\begin{table}[htpb]
		\centering
		\caption{ Initial data of 1D single wave. } \label{table:1D-singlewave}
		\begin{tabular}{|c|c|ccc|c|c|ccc|c|c|ccc|}
			\hline
			\multicolumn{2}{|c|}{}						& $\vr$ & $u$   & $p$ & \multicolumn{2}{c|}{}						& $\vr$ & $u$   & $p$ & \multicolumn{2}{c|}{}						& $\vr$ & $u$   & $p$\\
			\hline
			\hline
			\multirow{2}{*}{\tt C} 		& left	 	& 0.5 & 0.5 & 5
			& \multirow{2}{*}{\tt R} 		& left	 	& 0.5197 & -0.7259 & 0.4 
			& \multirow{2}{*}{\tt S} 		& left	 	& 1 & 0.7276 & 1\\
			\cline{2-5}
			\cline{7-10}
			& right	& 1 & 0.5 & 5 &						
			& right	& 1 & 0 & 1 &
			& right	& 0.5313 & 0 & 0.4 \\
			\hline
		\end{tabular}
	\end{table}
	
Given the initial data in Table \ref{table:1D-singlewave}, we compute the contact, rarefaction and shock wave till $T = 0.2, 0.2$ and $0.25$, respectively. 
Figure \ref{figure:1D-singlewave-rho} (resp. Figure \ref{figure:1D-singlewave-entropy}) shows the density $\vr$ (resp. the entropy $\eta$) obtained on different meshes with $n (=1/h) =32, 64, \dots,1024$ cells. 
Moreover, we present in Figure \ref{figure:1D-singlewave}  the errors of $(\vr,  \vm, \eta)$ in $L^2$-norm and $\E$ in $L^1$-norm, see the details in Table \ref{table:1D-singlewave-error} and \ref{table:1D-singlewave-error-1}. 

The numerical results show that 
\begin{itemize}
\item the Godunov method and the VFV method have  similar convergence rates;
 
\item for single rarefaction wave the convergence rate of $(\vr,\vm,\eta)$ (resp. $\E$)  is slightly greater than $1/2$ (resp. $1$), which is consistent to our theoretical results;

\item for single contact wave the convergence rate of $(\vr,\vm,\eta)$ (resp. $\E$)  is around $1/4$ (resp. $1/2$); 

\item for single shock wave the convergence rate of $(\vr,\vm,\eta)$ (resp. $\E$)  is around $1/2$ (resp. $1$).
\end{itemize}

\end{Example}	
\begin{Remark}\rm 
Here we compare the above observation with the result of Tadmor and Tang \cite{Tadmor-Tang:1999} for the rarefaction wave and the shock wave. 
\begin{itemize}
\item Directly applying the pointwise error estimate for scalar equation in \cite{Tadmor-Tang:1999}, i.e.  
\begin{equation*}
|(u^{\varepsilon} - u)(x,t)|  \approx \mbox{dist}(x,R(t))^{-1}\varepsilon \log^2 \varepsilon
\end{equation*}
with rarefaction set $R(t)$, 
we obtain that the $L^2$-error is bounded by $\varepsilon^{1/2} \log^2 \varepsilon$. 
Setting the vanishing viscosity coefficient $\varepsilon \approx h$ means that our analysis gives a better convergence rate.
\item
Applying the pointwise error estimate for scalar equation in \cite{Tadmor-Tang:1999}, i.e.  
\begin{equation*}
|(u^{\varepsilon} - u)(x,t)|  \approx  \mbox{dist}(x,S(t))^{-1} \varepsilon,
\end{equation*}
where $S(t)$ is the streamline of shock discontinuities, 
we obtain that $L^2$-convergence rate is $1/2$, which is consistent with our observations.
\end{itemize}

\end{Remark}

\begin{figure}[htbp]
	\setlength{\abovecaptionskip}{0.cm}
	\setlength{\belowcaptionskip}{-0.cm}
	\centering
	\begin{subfigure}{0.32\textwidth}
		\includegraphics[width=\textwidth]{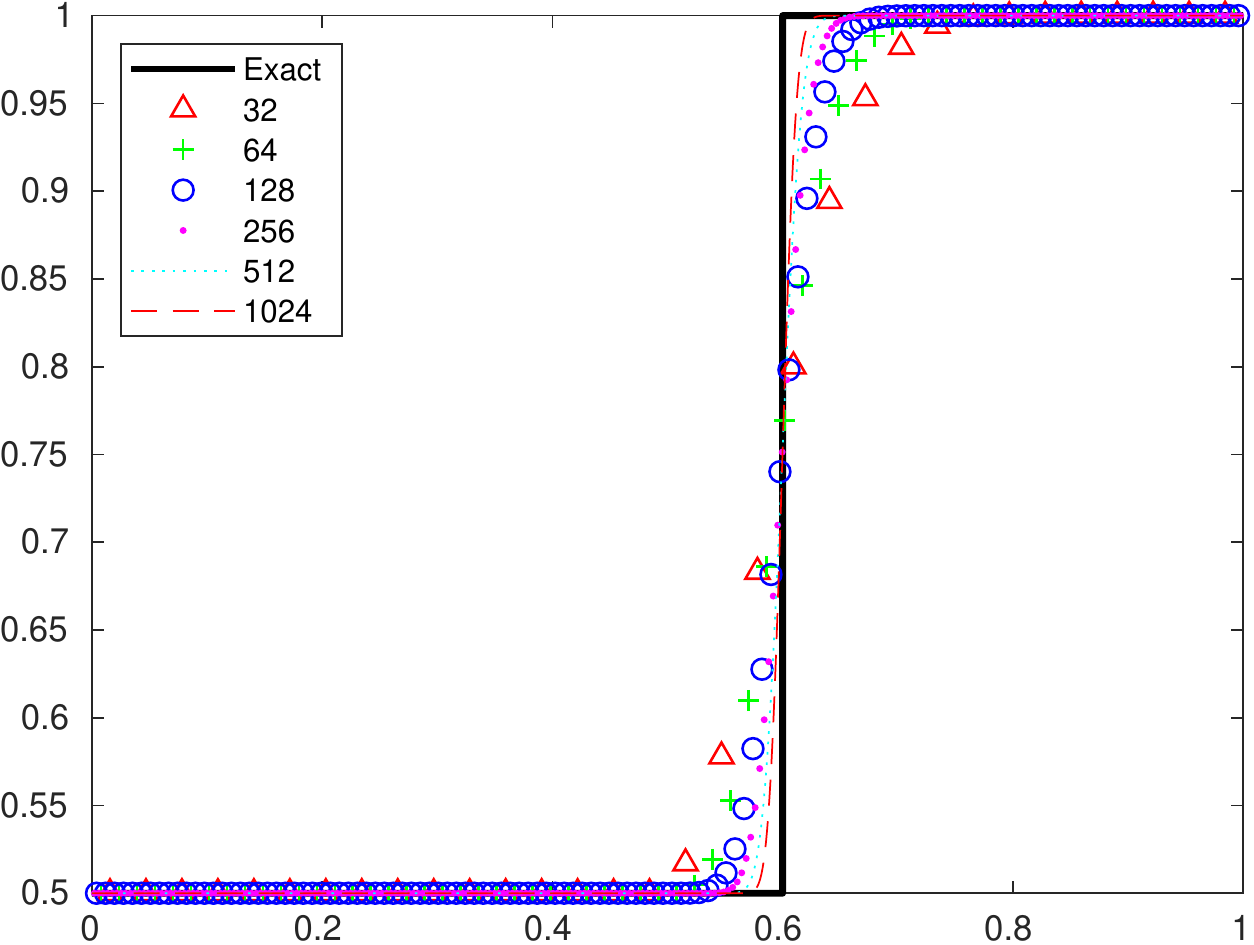}
		\caption{ Godunov - Contact }
	\end{subfigure}	
	\begin{subfigure}{0.32\textwidth}
		\includegraphics[width=\textwidth]{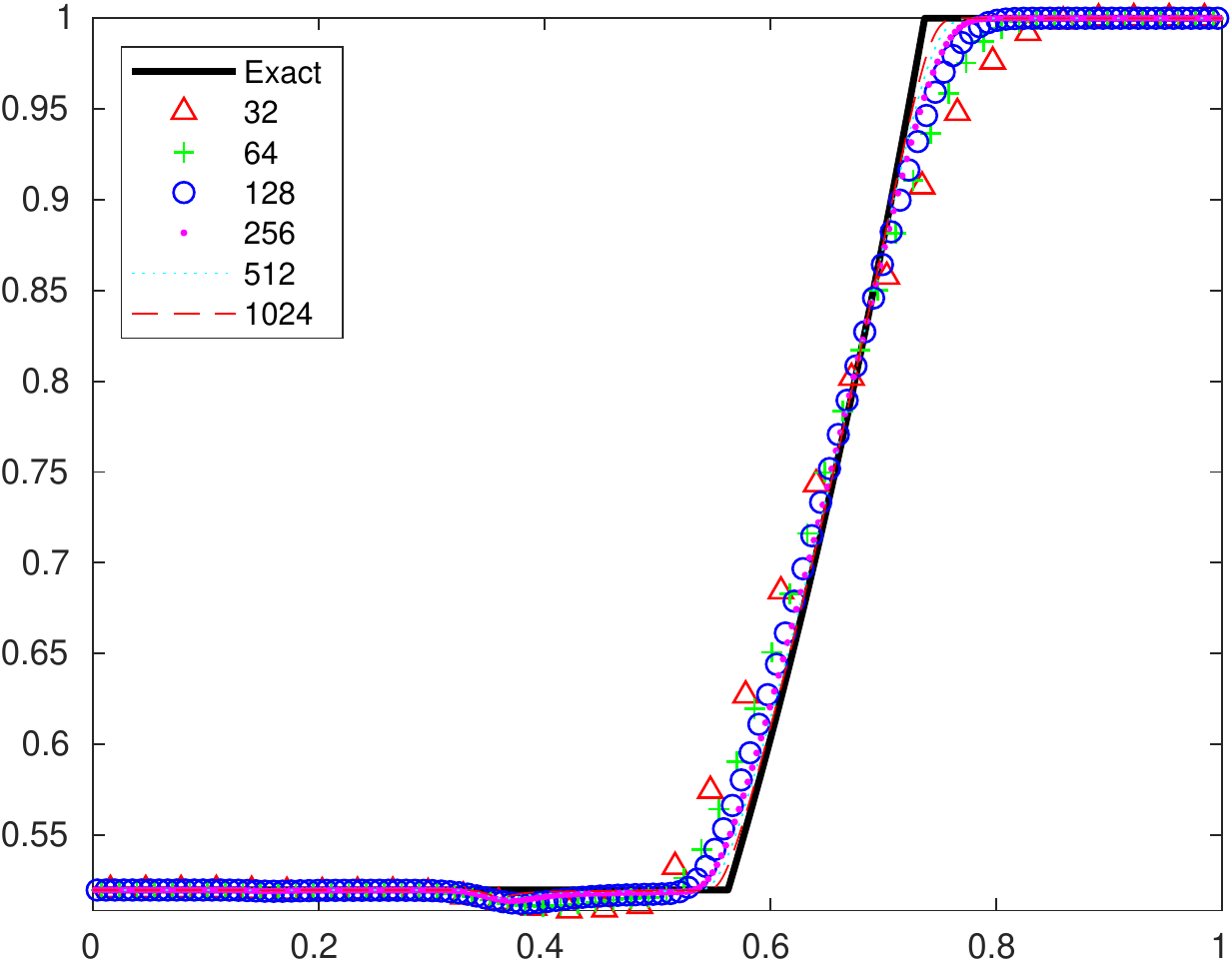}
		\caption{ Godunov - Rarefaction}
	\end{subfigure}	
	\begin{subfigure}{0.32\textwidth}
		\includegraphics[width=\textwidth]{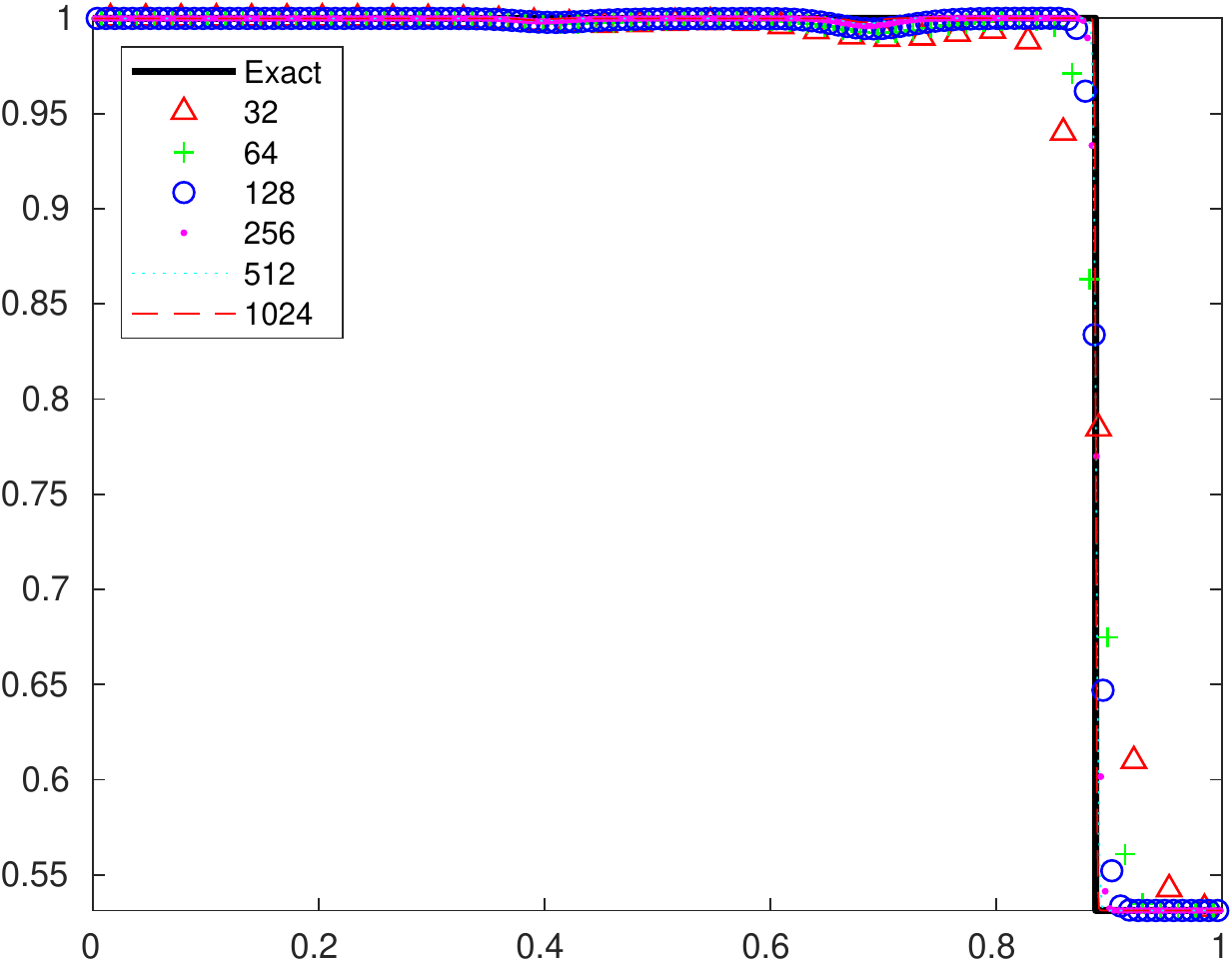}
		\caption{Godunov - Shock}
	\end{subfigure}	\\
	\begin{subfigure}{0.32\textwidth}
		\includegraphics[width=\textwidth]{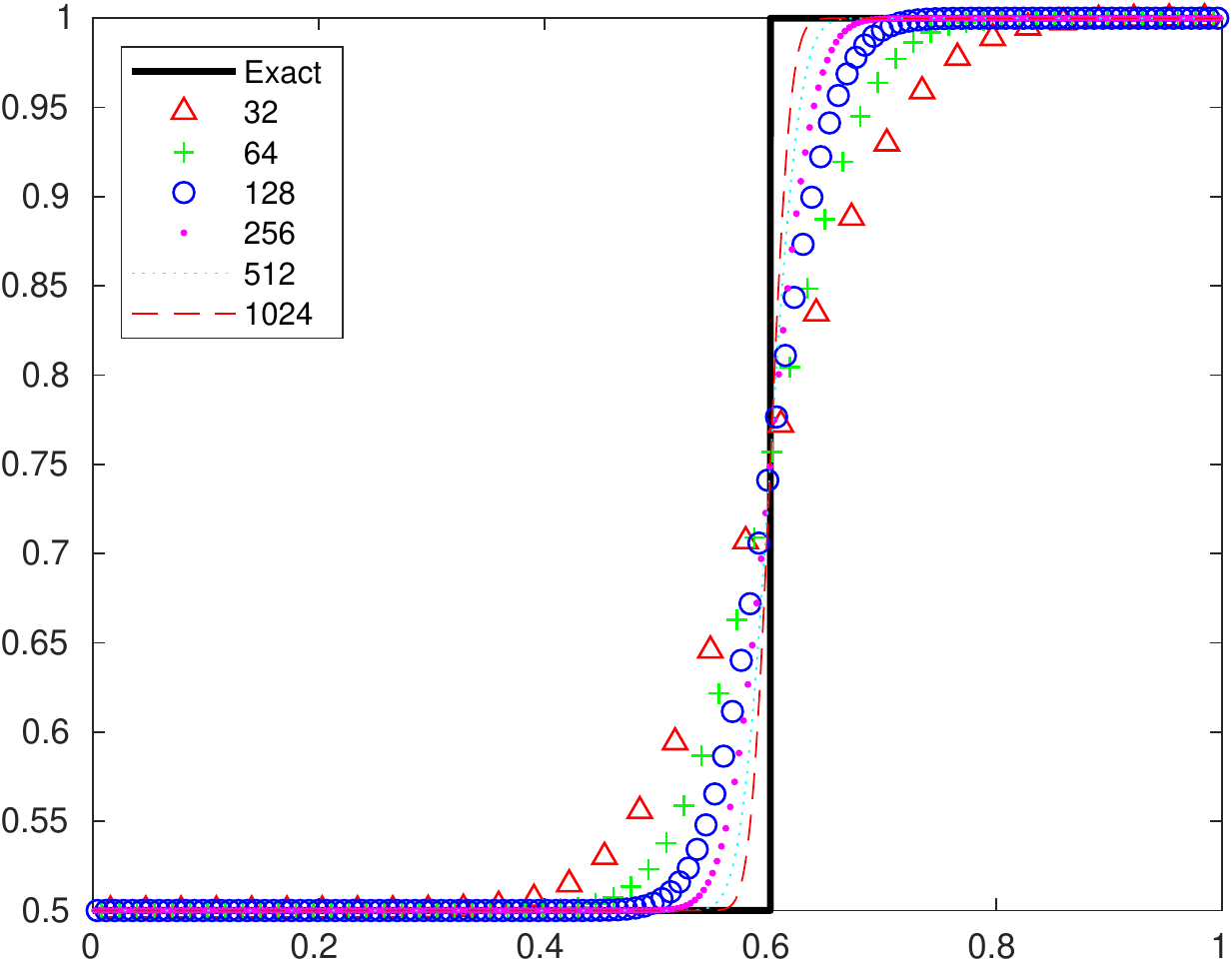}
		\caption{ VFV - Contact }
	\end{subfigure}	
	\begin{subfigure}{0.32\textwidth}
		\includegraphics[width=\textwidth]{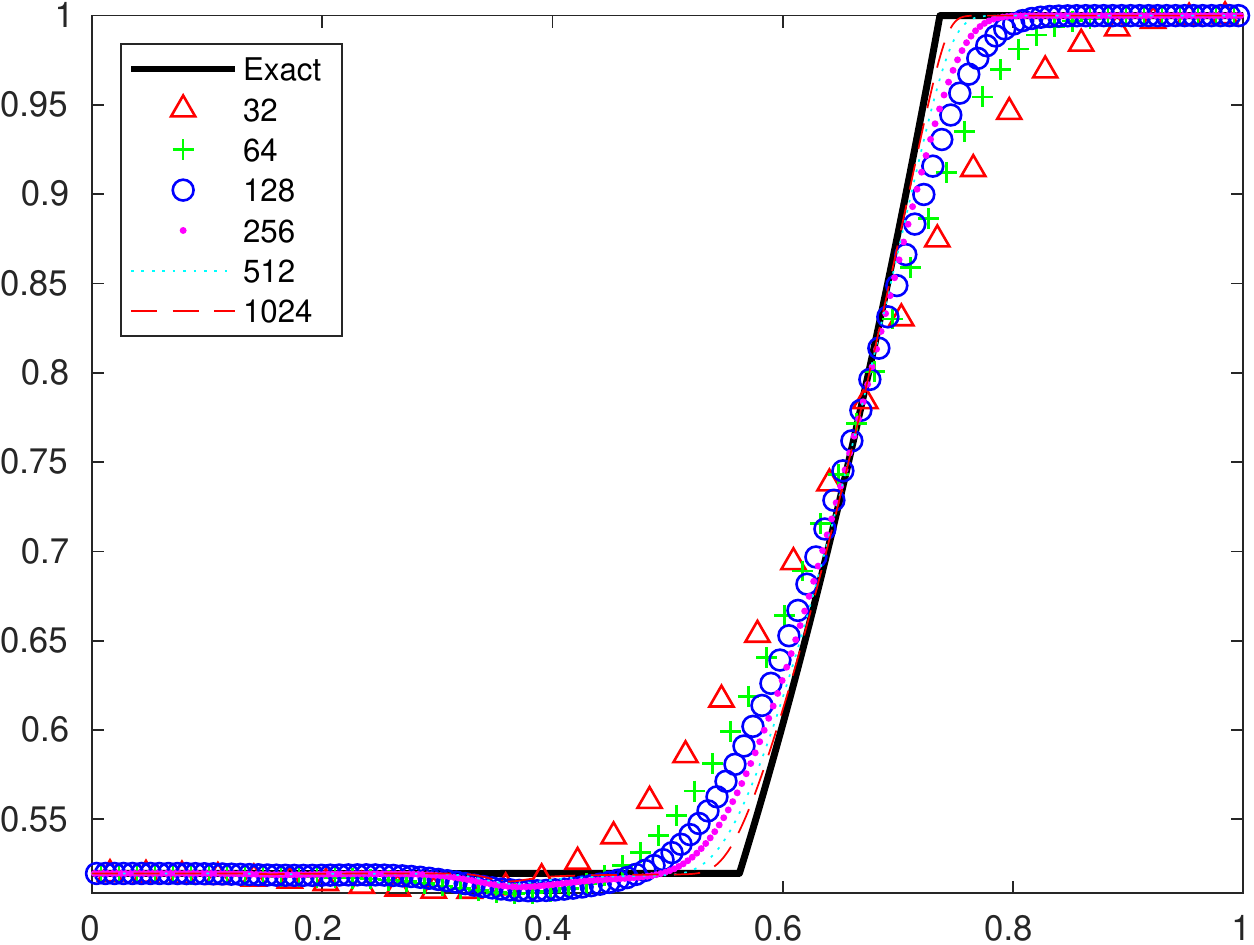}
		\caption{ VFV - Rarefaction}
	\end{subfigure}	
	\begin{subfigure}{0.32\textwidth}
		\includegraphics[width=\textwidth]{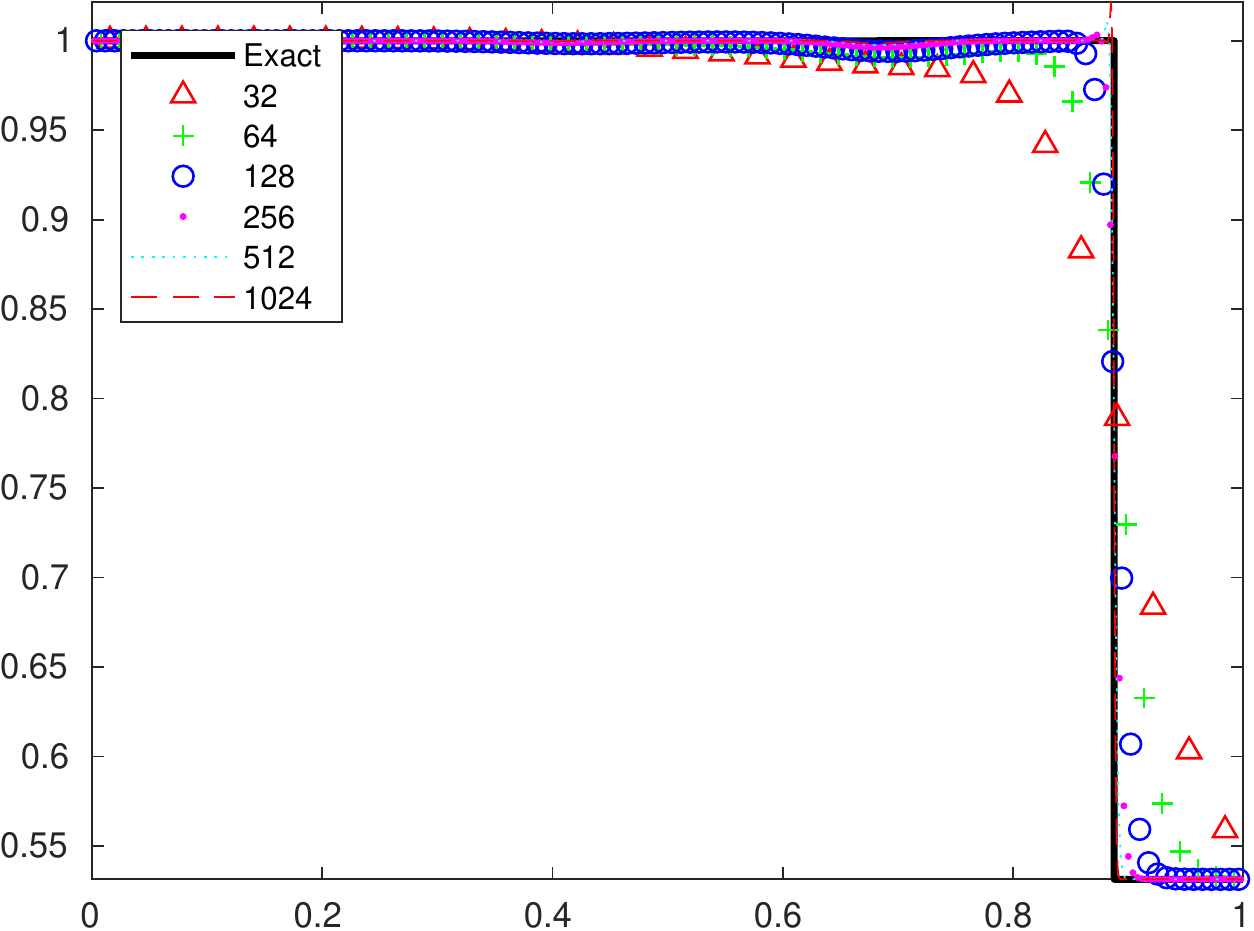}
		\caption{VFV - Shock}
	\end{subfigure}	
	\caption{\small{Example \ref{example:1D-singlewave}: density $\vr$ obtained by the Godunov method (top) and the VFV method (bottom).}}\label{figure:1D-singlewave-rho}
\end{figure}

\begin{figure}[htbp]
	\setlength{\abovecaptionskip}{0.cm}
	\setlength{\belowcaptionskip}{-0.cm}
	\centering
	\begin{subfigure}{0.32\textwidth}
		\includegraphics[width=\textwidth]{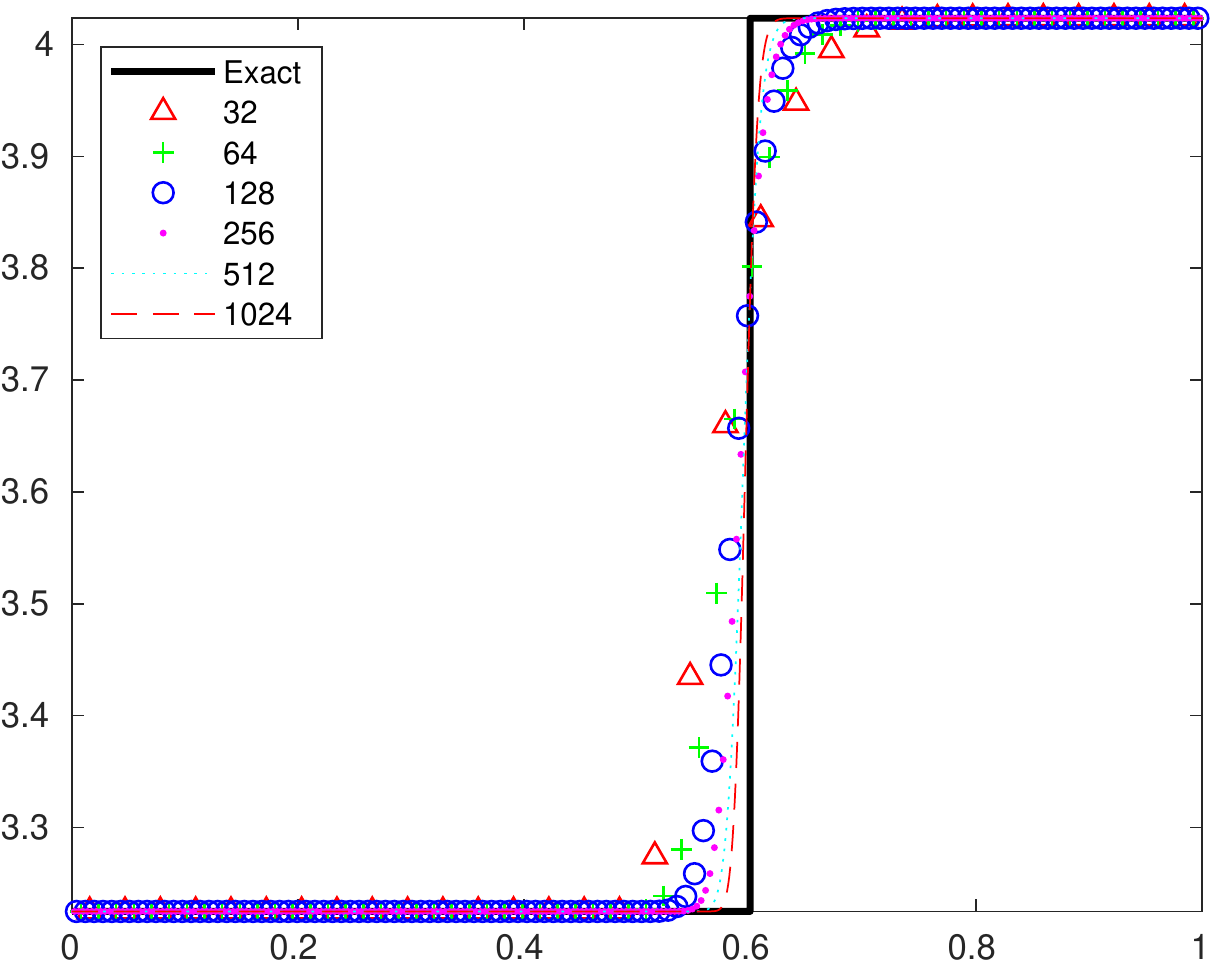}
		\caption{ Godunov - Contact }
	\end{subfigure}	
	\begin{subfigure}{0.32\textwidth}
		\includegraphics[width=\textwidth]{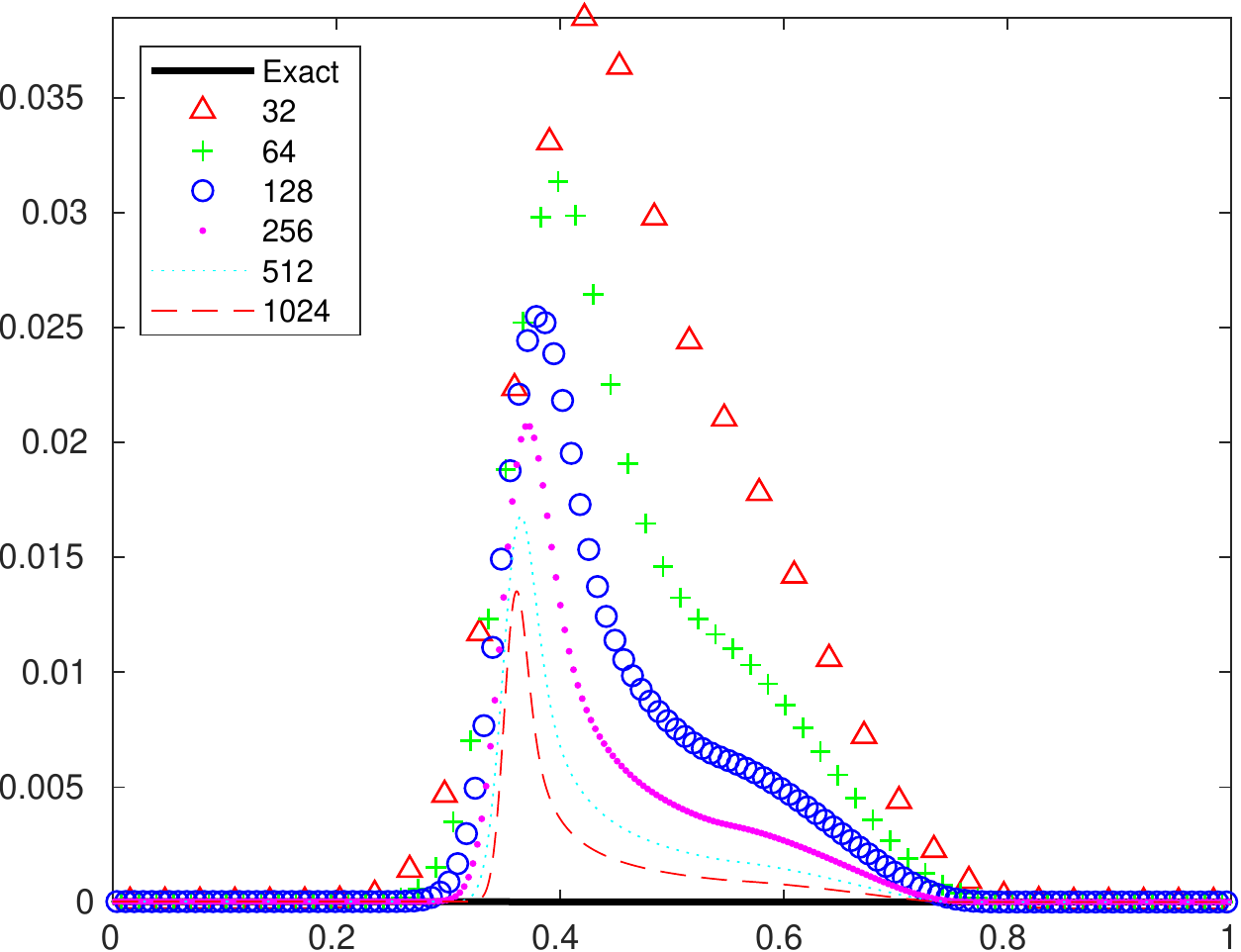}
		\caption{ Godunov - Rarefaction}
	\end{subfigure}	
	\begin{subfigure}{0.32\textwidth}
		\includegraphics[width=\textwidth]{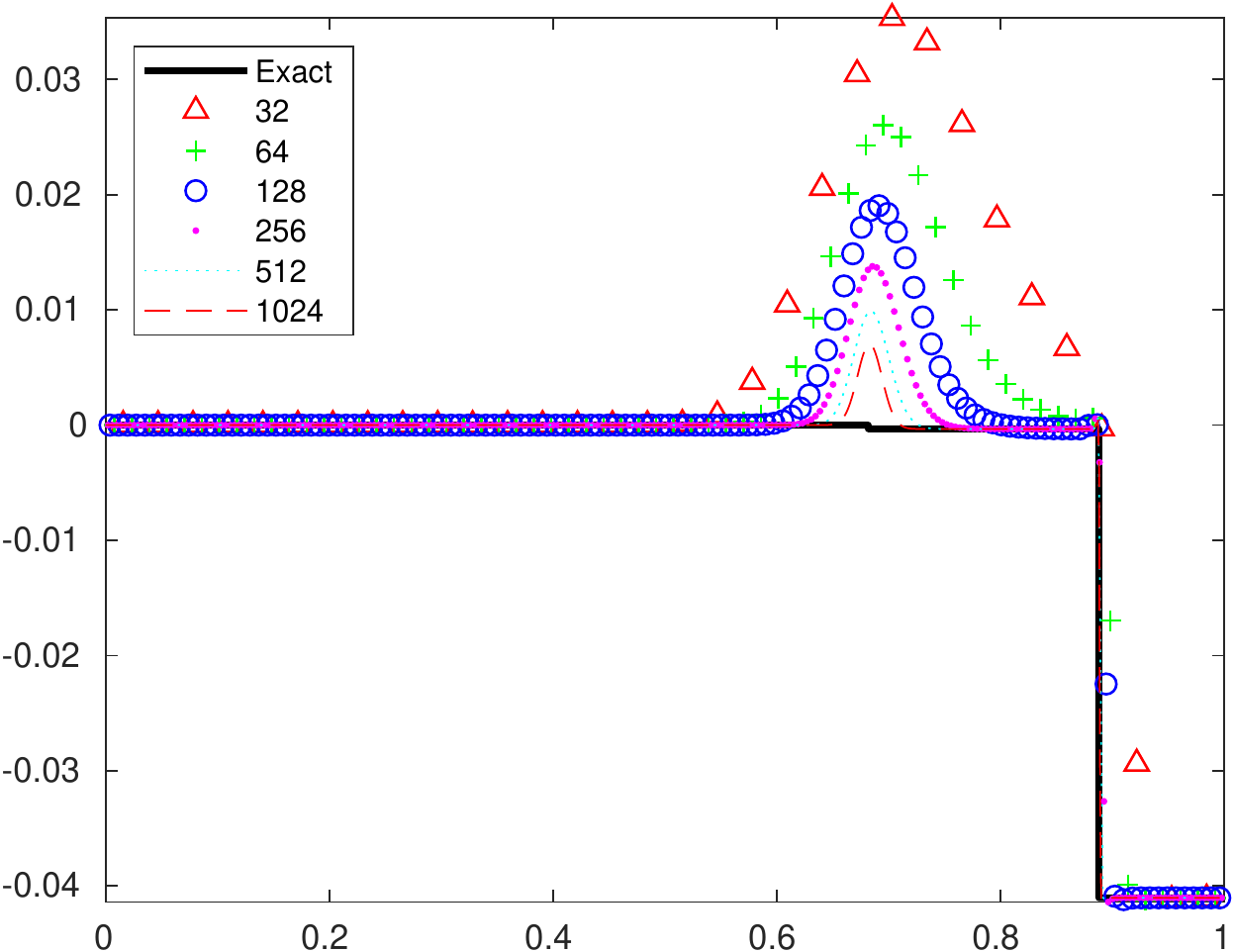}
		\caption{Godunov - Shock}
	\end{subfigure}	\\
	\begin{subfigure}{0.32\textwidth}
		\includegraphics[width=\textwidth]{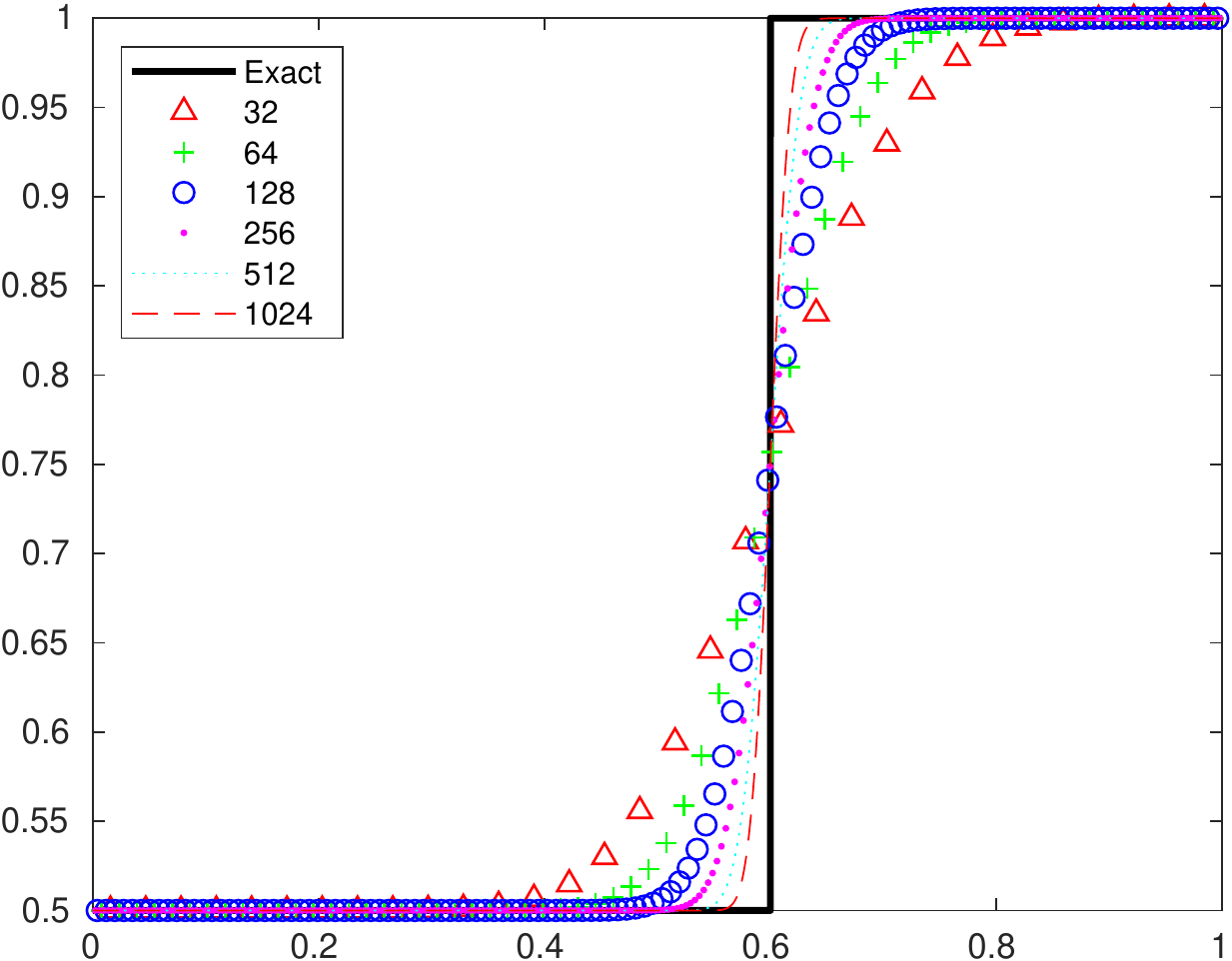}
		\caption{ VFV - Contact }
	\end{subfigure}	
	\begin{subfigure}{0.32\textwidth}
		\includegraphics[width=\textwidth]{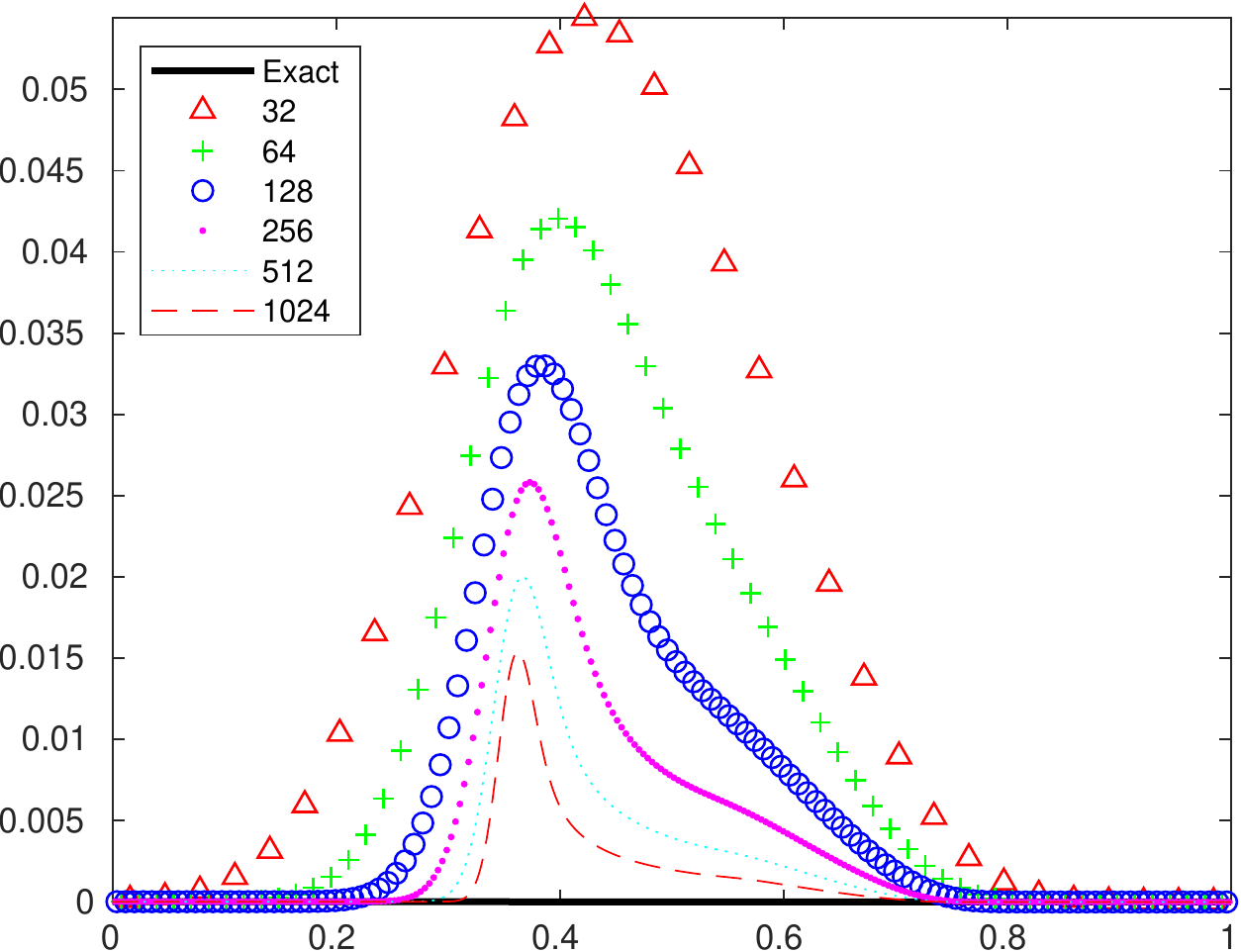}
		\caption{ VFV - Rarefaction}
	\end{subfigure}	
	\begin{subfigure}{0.32\textwidth}
		\includegraphics[width=\textwidth]{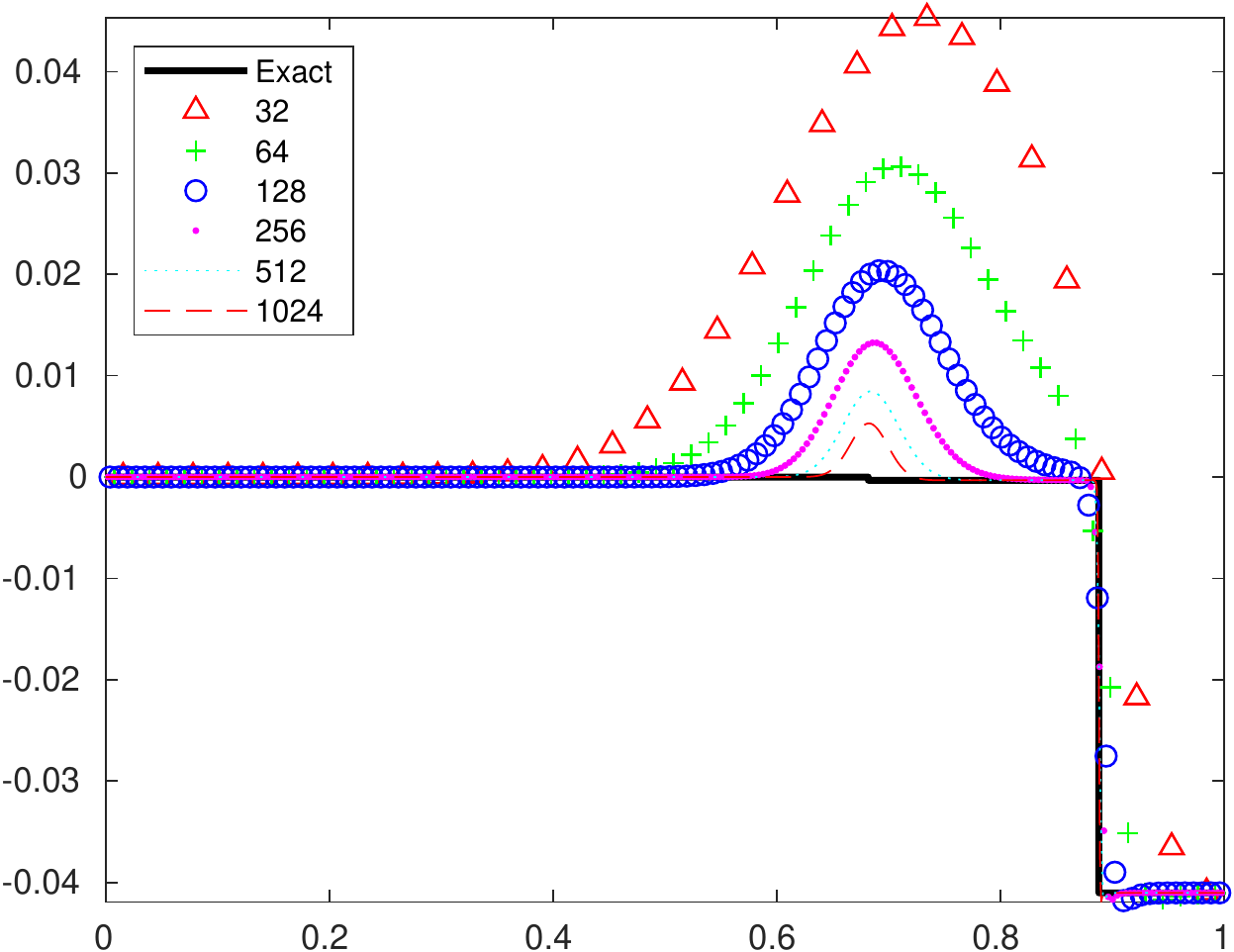}
		\caption{VFV - Shock}
	\end{subfigure}	
	\caption{\small{Example \ref{example:1D-singlewave}: entropy $\eta$ obtained by the Godunov method (top) and the VFV method (bottom).}}\label{figure:1D-singlewave-entropy}
\end{figure}

\begin{figure}[htbp]
	\setlength{\abovecaptionskip}{0.cm}
	\setlength{\belowcaptionskip}{-0.cm}
	\centering
	\begin{subfigure}{0.32\textwidth}
		\includegraphics[width=\textwidth]{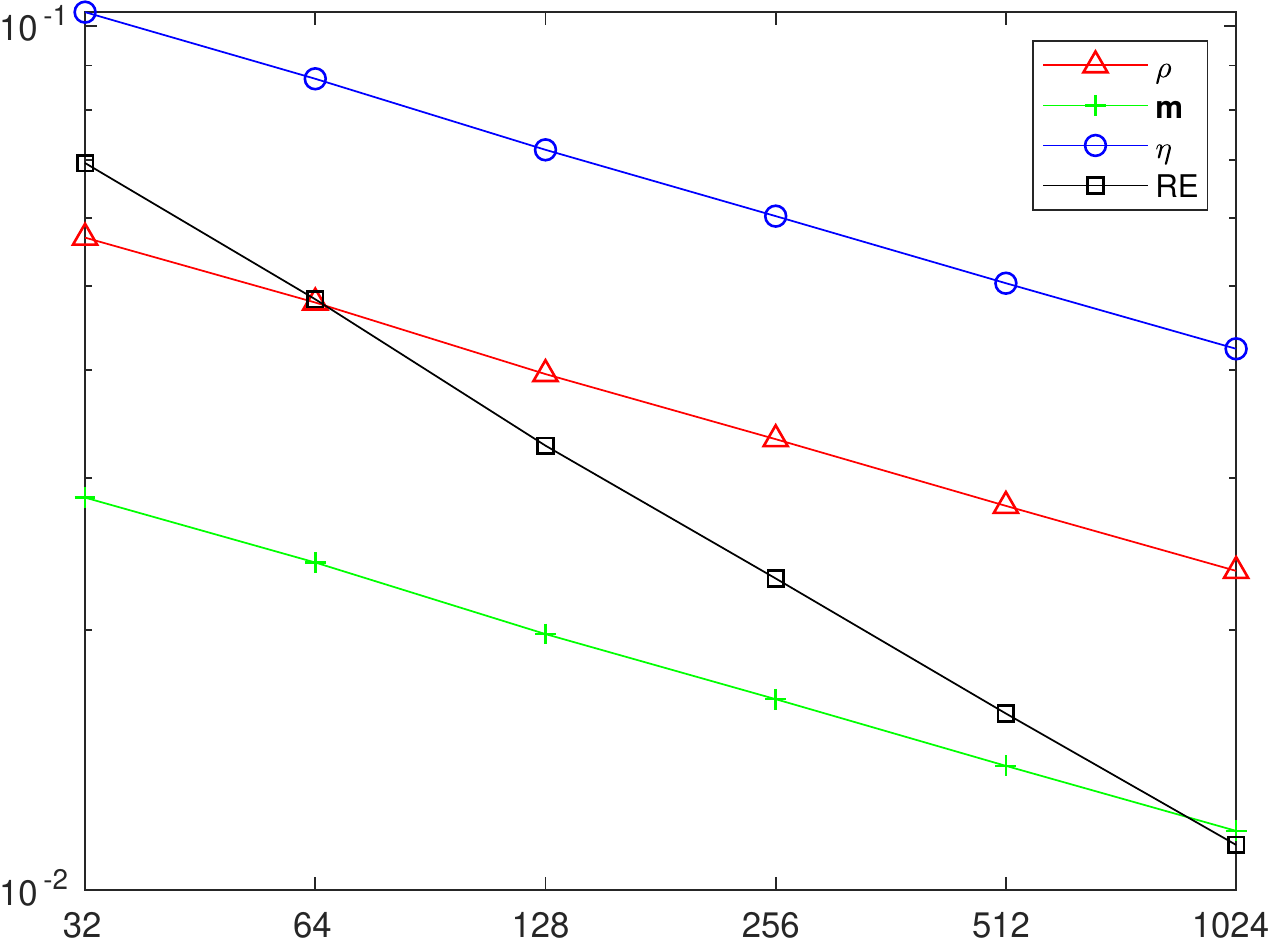}
		\caption{ Godunov - Contact }
	\end{subfigure}	
	\begin{subfigure}{0.32\textwidth}
		\includegraphics[width=\textwidth]{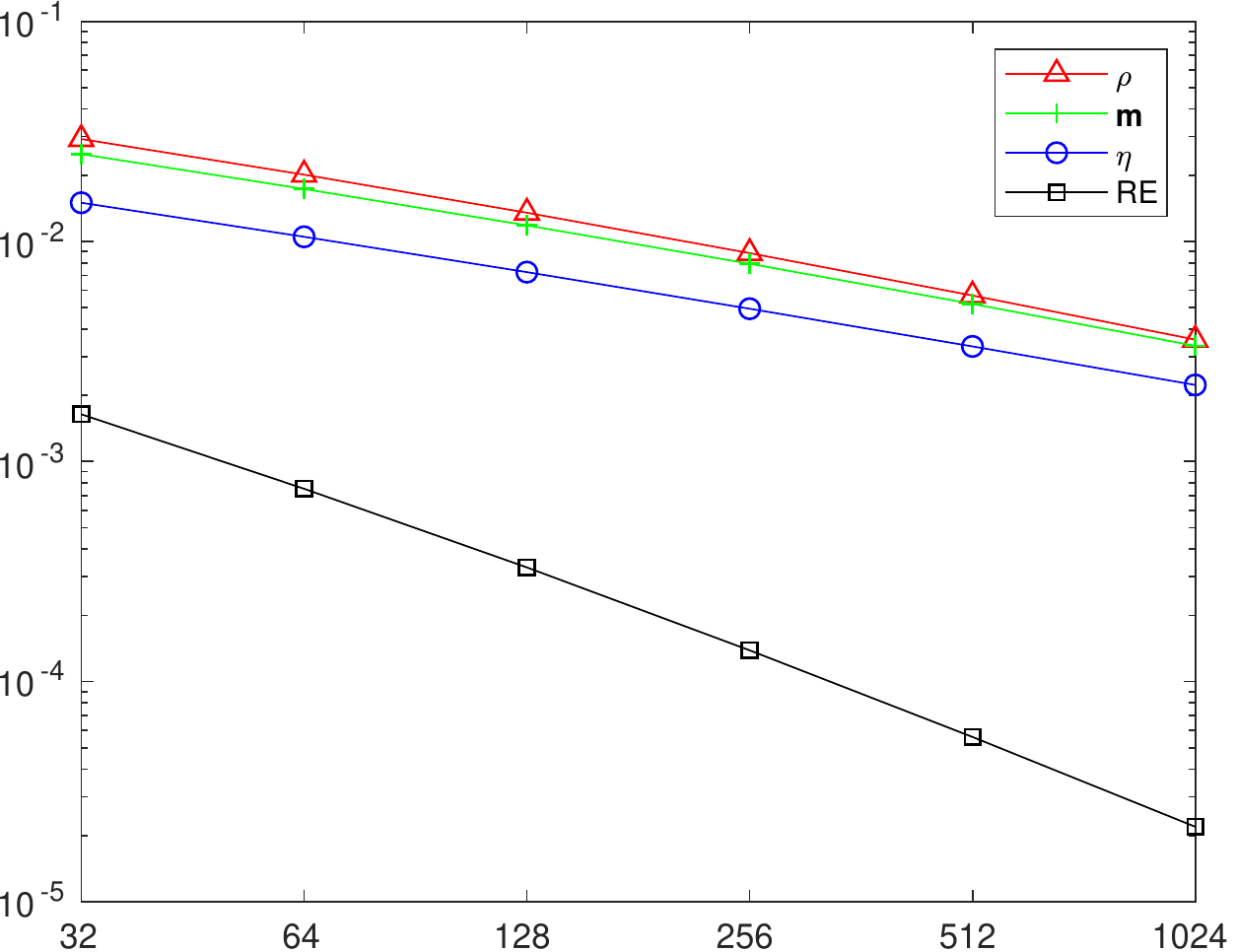}
		\caption{ Godunov - Rarefaction}
	\end{subfigure}	
	\begin{subfigure}{0.32\textwidth}
		\includegraphics[width=\textwidth]{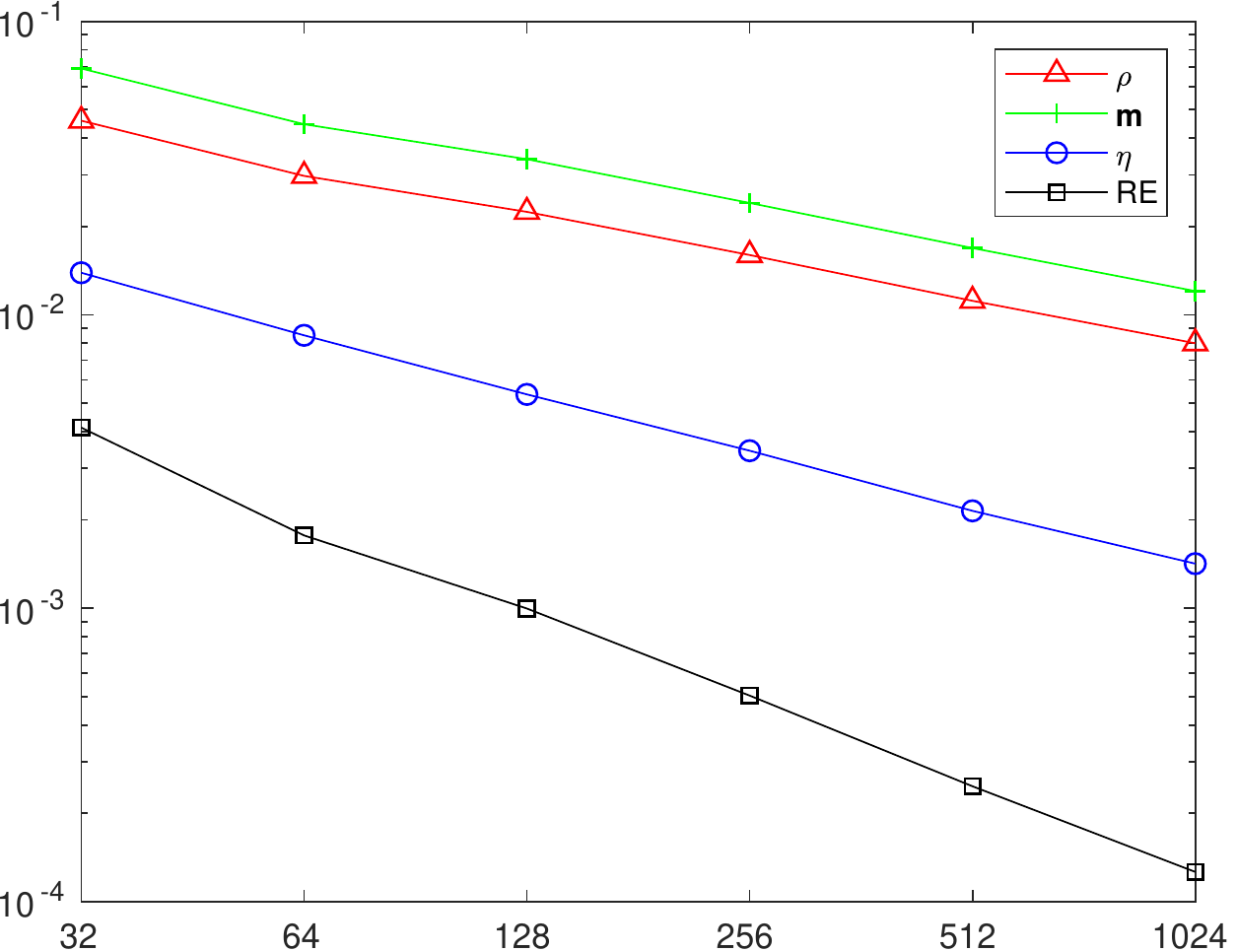}
		\caption{Godunov - Shock}
	\end{subfigure}	
	\begin{subfigure}{0.32\textwidth}
		\includegraphics[width=\textwidth]{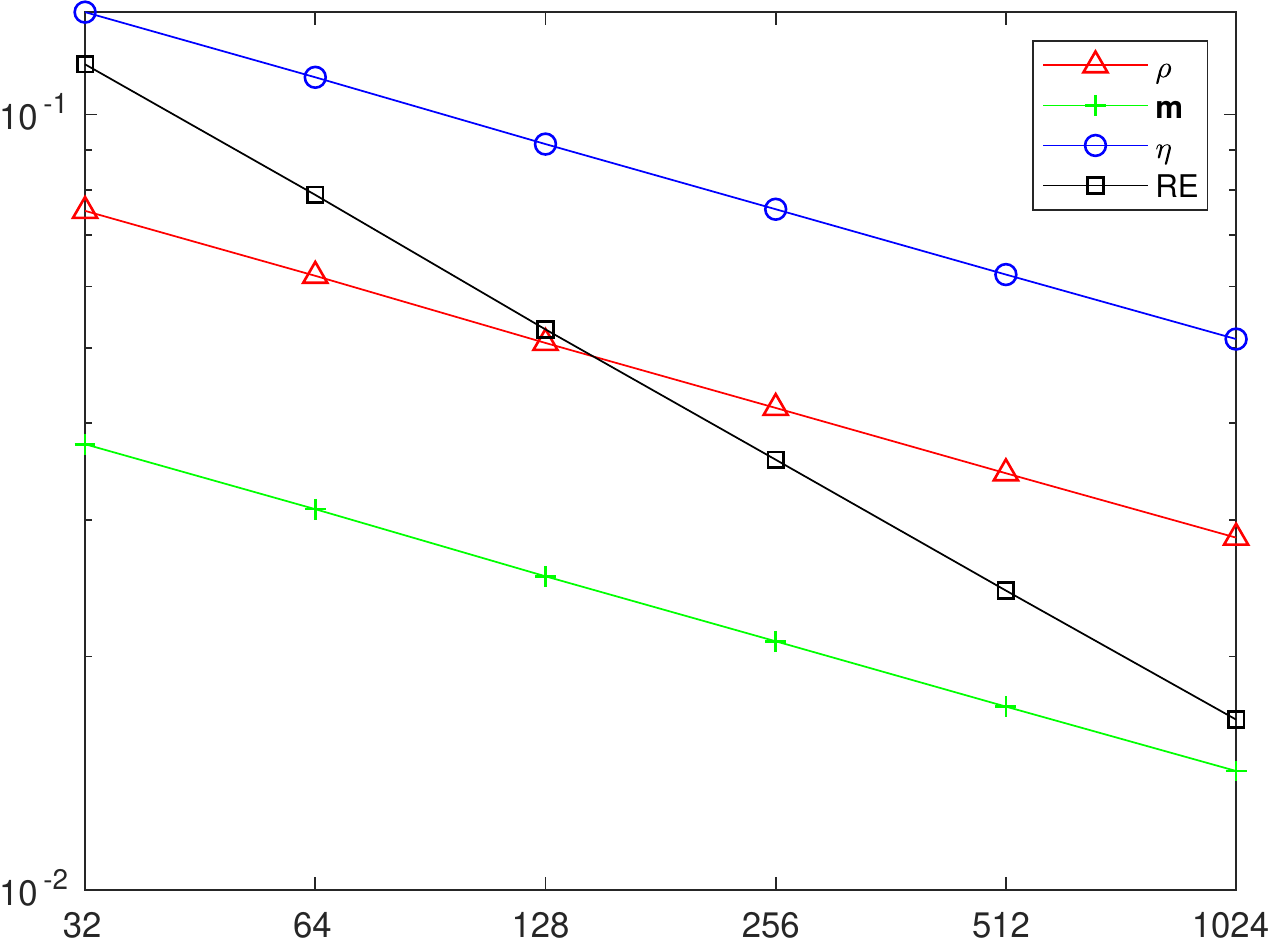}
		\caption{ VFV - Contact }
	\end{subfigure}	
	\begin{subfigure}{0.32\textwidth}
		\includegraphics[width=\textwidth]{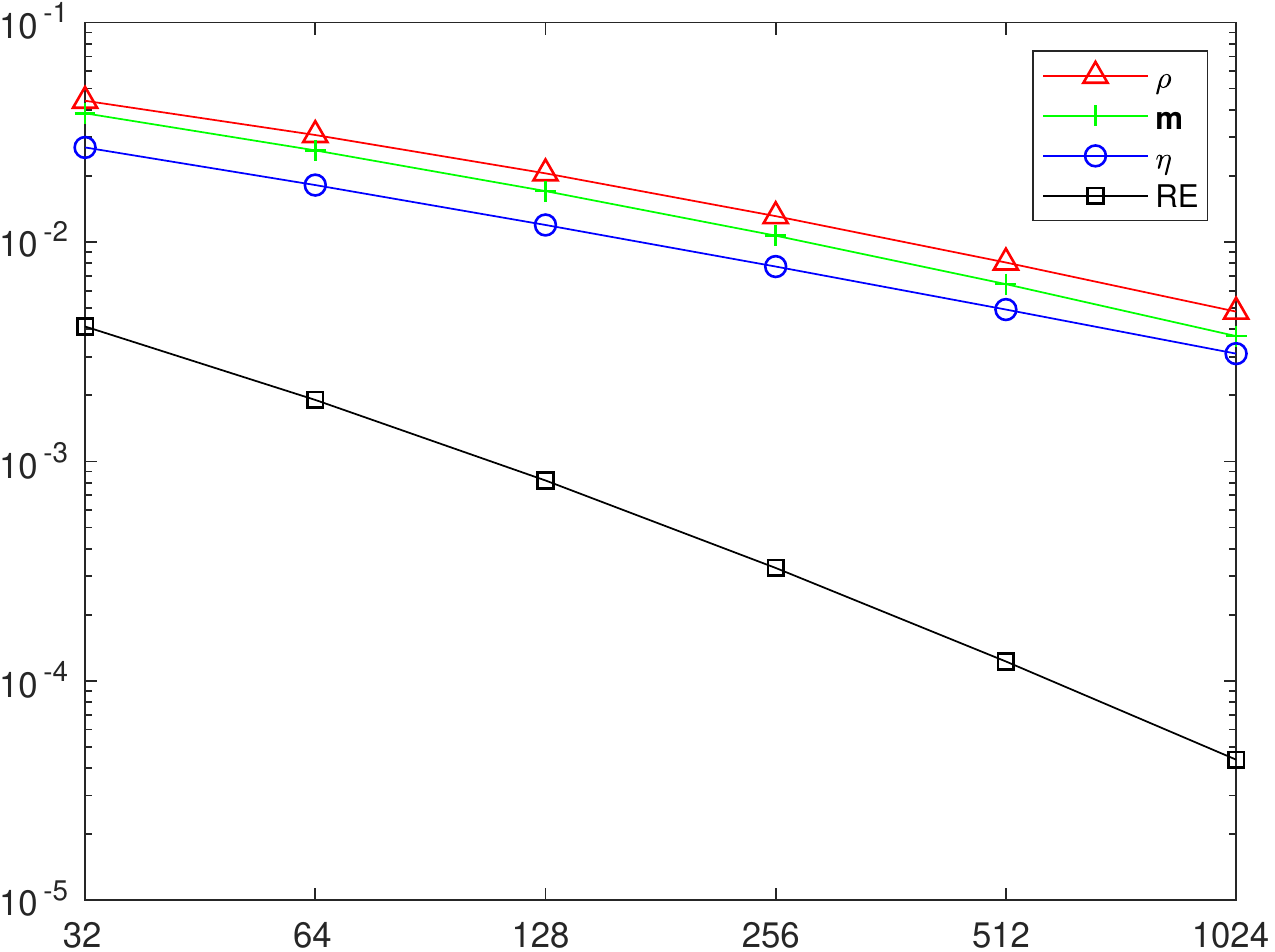}
		\caption{ VFV - Rarefaction}
	\end{subfigure}	
	\begin{subfigure}{0.32\textwidth}
		\includegraphics[width=\textwidth]{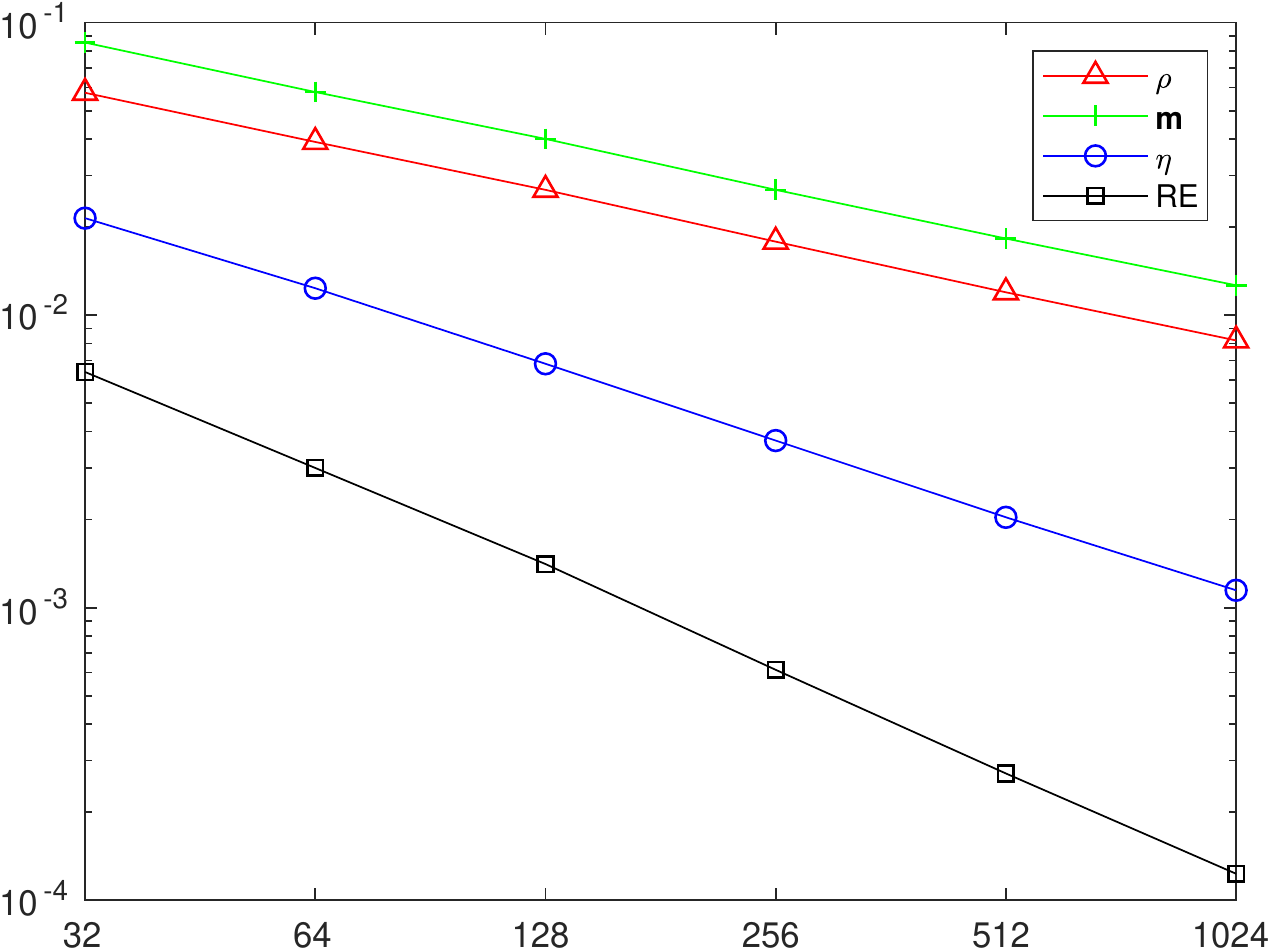}
		\caption{VFV - Shock}
	\end{subfigure}	
	\caption{\small{Example \ref{example:1D-singlewave}: the errors obtained on different meshes.}}\label{figure:1D-singlewave}
\end{figure}

\begin{table}[htbp]
	\centering
	\caption{Example \ref{example:1D-singlewave}: errors and convergence rates of $\vr,\eta, \E$ of the Godunov method.} \label{table:1D-singlewave-error}
	\begin{tabular}{|c|cc|cc|cc|}
		\hline
		\multirow{2}{*}{$n=1/h$} & \multicolumn{2}{c|}{ \bf Contact } & \multicolumn{2}{c|}{ \bf Rarefaction } & \multicolumn{2}{c|}{ \bf Shock }  \\
		\cline{2-7}
		& error & order &  error & order  &  error & order  \\
		\hline
		\hline
		\multicolumn{7}{|c|}{ \bf{Density}} \\
		\hline
		\hline
32 & 0.0569 & - & 0.0292 & - & 0.0459 & -\\
64 & 0.0479 & 0.2497 & 0.0201 & 0.5380 & 0.0297 & 0.6252\\
128 & 0.0395 & 0.2753 & 0.0135 & 0.5743 & 0.0224 & 0.4072\\
256 & 0.0332 & 0.2504 & 0.0089 & 0.6098 & 0.0160 & 0.4886\\
512 & 0.0278 & 0.2565 & 0.0057 & 0.6398 & 0.0112 & 0.5183\\
1024 & 0.0234 & 0.2501 & 0.0036 & 0.6656 & 0.0080 & 0.4805\\
		\hline
		\hline
		\multicolumn{7}{|c|}{ \bf{Entropy}} \\
		\hline
		\hline
32 & 0.1038 & - & 0.0150 & - & 0.0139 & - \\
64 & 0.0869 & 0.2563 & 0.0105 & 0.5138 & 0.0085 & 0.7083\\
128 & 0.0719 & 0.2732 & 0.0073 & 0.5335 & 0.0054 & 0.6689\\
256 & 0.0603 & 0.2551 & 0.0049 & 0.5524 & 0.0034 & 0.6375\\
512 & 0.0504 & 0.2580 & 0.0033 & 0.5687 & 0.0021 & 0.6807\\
1024 & 0.0423 & 0.2522 & 0.0022 & 0.5818 & 0.0014 & 0.5978\\
		\hline
		\hline
		\multicolumn{7}{|c|}{ \bf{Relative energy}} \\
		\hline
		\hline
32 & 0.069415 & - & 0.001640 & - & 0.004126 & - \\
64 & 0.048272 & 0.5241 & 0.000752 & 1.1246 & 0.001771 & 1.2207\\
128 & 0.032676 & 0.5630 & 0.000330 & 1.1871 & 0.000998 & 0.8274\\
256 & 0.022931 & 0.5109 & 0.000139 & 1.2519 & 0.000504 & 0.9858\\
512 & 0.015997 & 0.5195 & 0.000056 & 1.3075 & 0.000247 & 1.0266\\
1024 & 0.011269 & 0.5054 & 0.000022 & 1.3554 & 0.000126 & 0.9697\\
		\hline
	\end{tabular}
\end{table}

\begin{table}[htbp]
	\centering
	\caption{Example \ref{example:1D-singlewave}: errors and convergence rates of $\vr,\eta, \E$ of the VFV method.} \label{table:1D-singlewave-error-1}
	\begin{tabular}{|c|cc|cc|cc|}
		\hline
		\multirow{2}{*}{$n=1/h$} & \multicolumn{2}{c|}{ \bf Contact } & \multicolumn{2}{c|}{ \bf Rarefaction } & \multicolumn{2}{c|}{ \bf Shock }  \\
		\cline{2-7}
		& error & order &  error & order  &  error & order  \\
		\hline
		\hline
		\multicolumn{7}{|c|}{ \bf{Density}} \\
		\hline
		\hline
32 & 0.0751 & - &  0.0440 & - & 0.0575 & -\\
64 & 0.0619 & 0.2784 & 0.0307 & 0.5185 & 0.0391 & 0.5584\\
128 & 0.0507 & 0.2877 & 0.0205 & 0.5805 & 0.0268 & 0.5440\\
256 & 0.0418 & 0.2784 & 0.0131 & 0.6479 & 0.0178 & 0.5882\\
512 & 0.0345 & 0.2799 & 0.0081 & 0.7021 & 0.0120 & 0.5752\\
1024 & 0.0285 & 0.2759 & 0.0048 & 0.7436 & 0.0082 & 0.5465\\
		\hline
		\hline
		\multicolumn{7}{|c|}{ \bf{Entropy}} \\
		\hline
		\hline
32 & 0.1356 & - & 0.0270 & - & 0.0214 & - \\
64 & 0.1117 & 0.2791 & 0.0182 & 0.5696 & 0.0124 & 0.7956 \\
128 & 0.0916 & 0.2862 & 0.0120 & 0.6043 & 0.0068 & 0.8582 \\
256 & 0.0755 & 0.2792 & 0.0077 & 0.6295 & 0.0037 & 0.8719 \\
512 & 0.0622 & 0.2799 & 0.0049 & 0.6510 & 0.0020 & 0.8713 \\
1024 & 0.0513 & 0.2764 & 0.0031 & 0.6675 & 0.0011 & 0.8287 \\
		\hline
		\hline
		\multicolumn{7}{|c|}{ \bf{Relative energy}} \\
		\hline
		\hline
32 &  0.116107 & -  & 0.004119 & -  & 0.006390 & -  \\
64 &  0.078778 & 0.5596 & 0.001906 & 1.1118 & 0.003002 & 1.0899 \\
128 &  0.052828 & 0.5765 & 0.000818 & 1.2196 & 0.001409 & 1.0911 \\
256 &  0.035875 & 0.5583 & 0.000327 & 1.3248 & 0.000613 & 1.2011 \\
512 &  0.024321 & 0.5608 & 0.000122 & 1.4157 & 0.000271 & 1.1768 \\
1024 &  0.016577 & 0.5530 & 0.000044 & 1.4865 & 0.000123 & 1.1391\\
		\hline
	\end{tabular}
\end{table}

\begin{Example} \label{example:1D-RP-2} \rm 
This experiment is used to further test our theoretical analysis. It describes left-going and right-going rarefaction waves, whose initial data are given by
\begin{equation*}
(\vr ,  u ,  p)(x,0)
\; = \; \begin{cases}
(1 ,\, -2 ,\, 0.4 ) , & x < 0.5, \\
(1 ,\, 2 ,\, 0.4 ) , & x > 0.5.
\end{cases}
\end{equation*}
Figure \ref{figure:1D-RP-2}(a) and (c) show the density $\vr$ obtained at $T = 0.15$ by the Godunov method and the VFV method, respectively. 
Moreover, the corresponding $L^2$-error of $(\vr, \vm, \eta)$ as well as the $L^1$-norm of $\E$ are shown in Figure~ \ref{figure:1D-RP-2}(b) and (d), see also Table \ref{table:1D-RP-2}. 

Our numerical results show that  the converge rate is approximately $1/2$ (resp. $1$) for $(\vr, \vm, \eta)$ (resp. $\E$), which is consistent with our theoretical analysis.
\end{Example}

\begin{figure}[htbp]
	\setlength{\abovecaptionskip}{0.cm}
	\setlength{\belowcaptionskip}{-0.cm}
	\centering
	\begin{subfigure}{0.45\textwidth}
		\includegraphics[width=\textwidth]{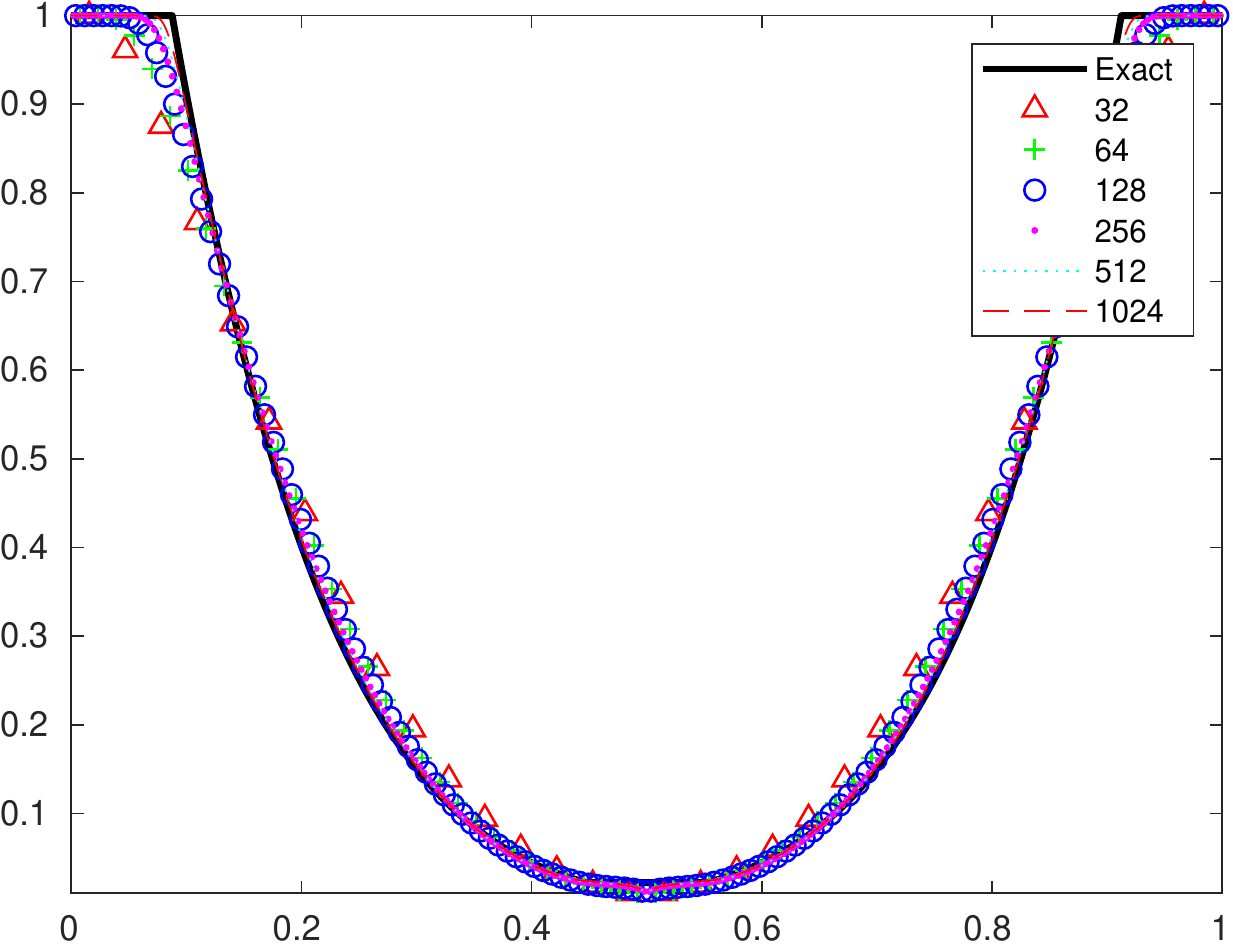}
		\caption{ Godunov - $\vr$}
	\end{subfigure}	
	\begin{subfigure}{0.46\textwidth}
		\includegraphics[width=\textwidth]{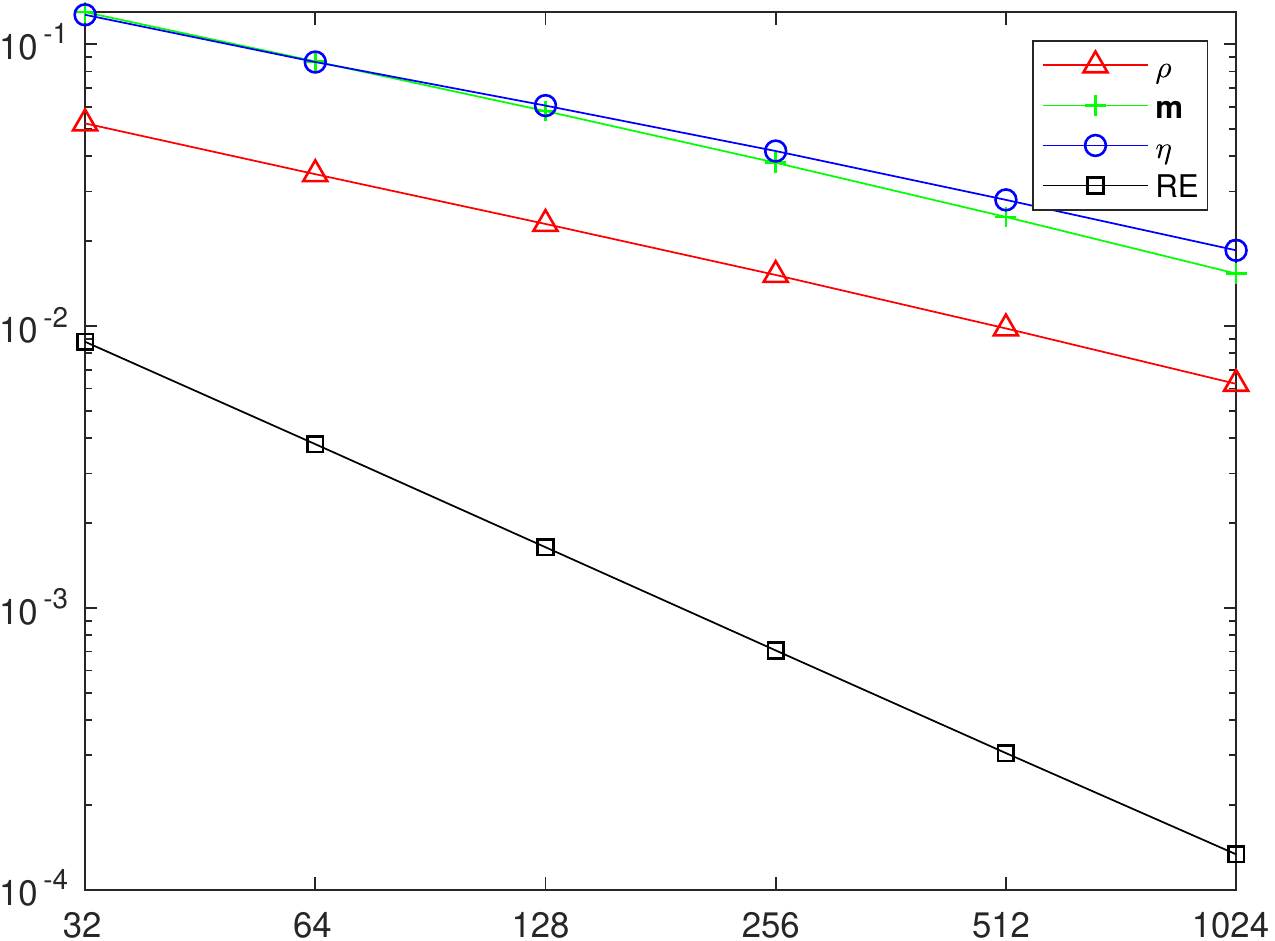}
		\caption{ Godunov - error}
	\end{subfigure}	\\
	\begin{subfigure}{0.45\textwidth}
		\includegraphics[width=\textwidth]{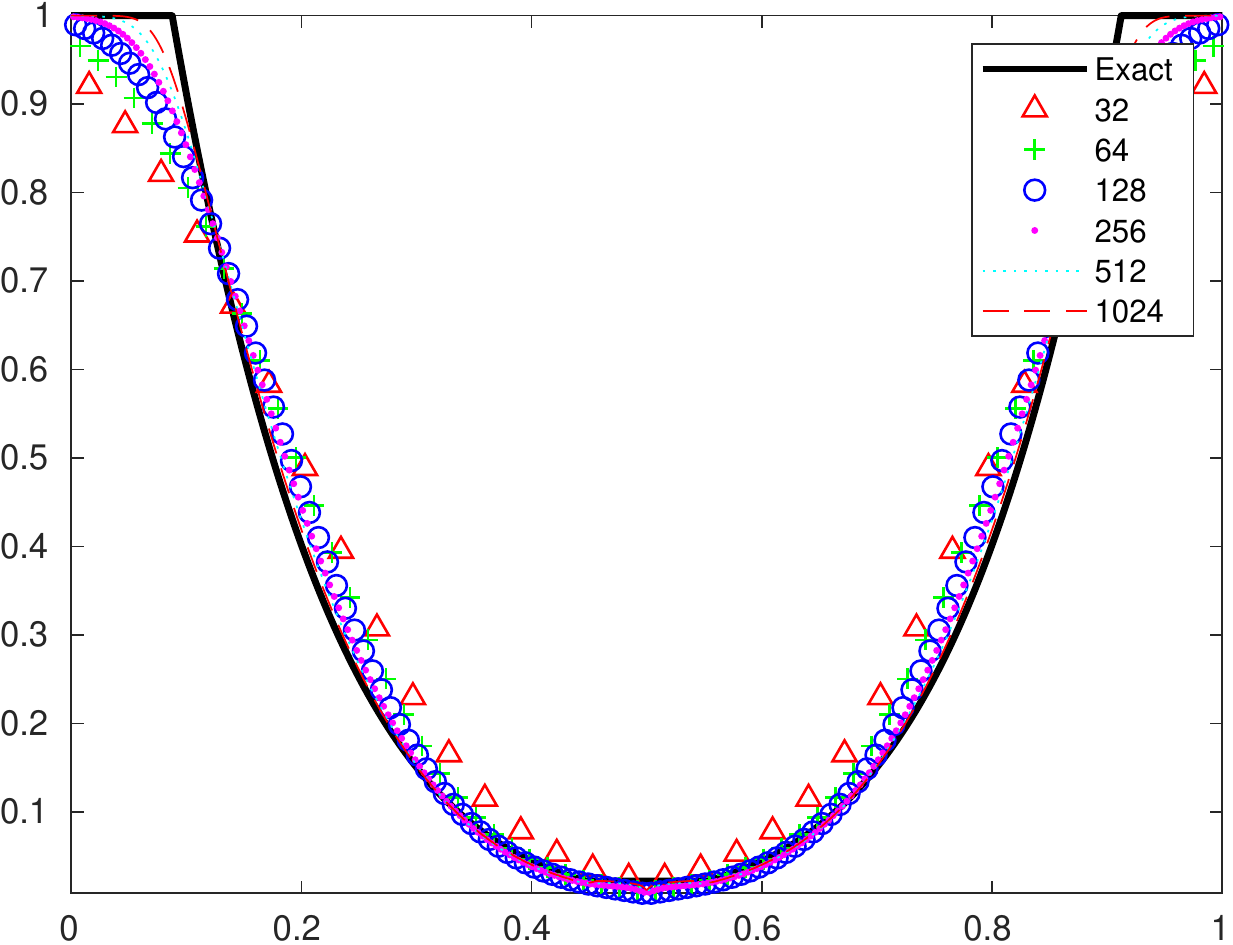}
		\caption{ VFV - $\vr$}
	\end{subfigure}	
	\begin{subfigure}{0.46\textwidth}
		\includegraphics[width=\textwidth]{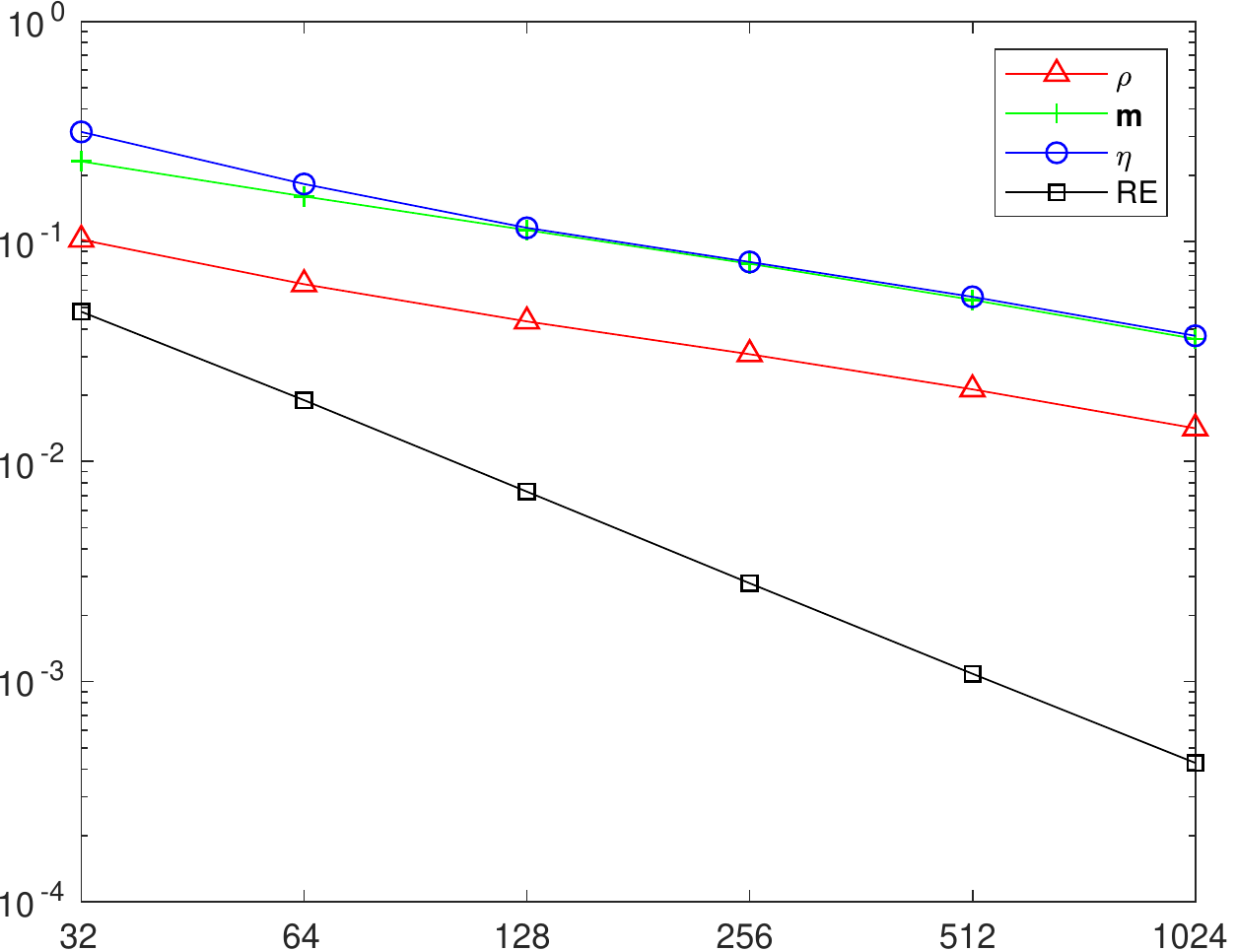}
		\caption{VFV - error}
	\end{subfigure}	
	\caption{\small{Example \ref{example:1D-RP-2}: density $\vr$ and errors at $T=0.15$ of the Godunov method and the VFV method.}}\label{figure:1D-RP-2}
\end{figure}

\begin{table}[htbp]
	\centering
	\caption{Example \ref{example:1D-RP-2}: errors and convergence rates of $\vr, \vm,\eta,\E$ of the Godunov and VFV methods.  } \label{table:1D-RP-2}
	\begin{tabular}{|c|cc|cc|cc|cc|}
		\hline
		\multirow{2}{*}{$n$} & \multicolumn{2}{c|}{ density } & \multicolumn{2}{c|}{ momentum } & \multicolumn{2}{c|}{ entropy } & \multicolumn{2}{c|}{ relative energy}  \\
		\cline{2-9}
		& error & order &  error & order & error & order &  error & order  \\
		\hline
		\hline
		\multicolumn{9}{|c|}{ \bf{ Godunov}} \\
		\hline
		\hline
32 & 0.0523 & - & 0.1299 & - & 0.1271 & - & 0.008792 & -   \\
64 & 0.0346 & 0.5987 & 0.0869 & 0.5803 & 0.0864 & 0.5566 & 0.003810 & 1.2064 \\
128 & 0.0230 & 0.5865 & 0.0579 & 0.5853 & 0.0605 & 0.5135 & 0.001641 & 1.2148 \\
256 & 0.0152 & 0.6012 & 0.0380 & 0.6090 & 0.0418 & 0.5354 & 0.000706 & 1.2162 \\
512 & 0.0098 & 0.6303 & 0.0244 & 0.6392 & 0.0280 & 0.5755 & 0.000305 & 1.2101 \\
1024 & 0.0062 & 0.6531 & 0.0153 & 0.6671 & 0.0186 & 0.5944 & 0.000134 & 1.1928 \\
		\hline
		\hline
		\multicolumn{9}{|c|}{ \bf{ VFV}} \\
		\hline
		\hline
32 & 0.1019 & - & 0.2310 & - & 0.3146 & - & 0.047945 & -  \\
64 & 0.0639 & 0.6723 & 0.1602 & 0.5279 & 0.1824 & 0.7866 & 0.019004 & 1.3350 \\
128 & 0.0433 & 0.5616 & 0.1126 & 0.5091 & 0.1153 & 0.6617 & 0.007298 & 1.3808 \\
256 & 0.0307 & 0.4950 & 0.0792 & 0.5072 & 0.0807 & 0.5151 & 0.002798 & 1.3830 \\
512 & 0.0213 & 0.5301 & 0.0543 & 0.5453 & 0.0560 & 0.5261 & 0.001086 & 1.3660 \\
1024 & 0.0142 & 0.5878 & 0.0361 & 0.5904 & 0.0373 & 0.5884 & 0.000427 & 1.3453 \\
		\hline
	\end{tabular}
\end{table}

\begin{Example}\label{example:1D-Sod} \rm 
This experiment is devoted to the 1D Sod  problem, in order to test the convergence rate for the solution consisting of the left rarefaction, contact and right shock waves. 
Although the exact solution is not smooth we can still test corresponding convergence rates. 
In this example the final time is set to $T=0.15$ and the initial data are given by
\begin{equation*}
(\vr ,  u ,  p)(x,0)
\; = \; \begin{cases}
(1 ,\, 0 ,\, 1 ) , & x < 0.5, \\
(0.125 ,\, 0 ,\, 0.1 ) , & x > 0.5.
\end{cases}
\end{equation*}
Figure~\ref{figure:1D-Sod}(a) and (c) show the density obtained with the Godunov and VFV methods on different meshes. 
Moreover, errors of $(\vr,  \vm, \eta)$ and $\E$  are shown in Figure~\ref{figure:1D-Sod}(b) and (d), respectively, see also Table \ref{table:1D-Sod} for more details. 

These numerical results indicate that the convergence rates of $(\vr, \vm, \eta)$ (resp. $\E$) seem to be between $1/4$ and $1/2$ (resp. between $1/2$ and $1$). 	
\end{Example}

\begin{figure}[htbp]
	\setlength{\abovecaptionskip}{0.cm}
	\setlength{\belowcaptionskip}{-0.cm}
	\centering
	\begin{subfigure}{0.45\textwidth}
		\includegraphics[width=\textwidth]{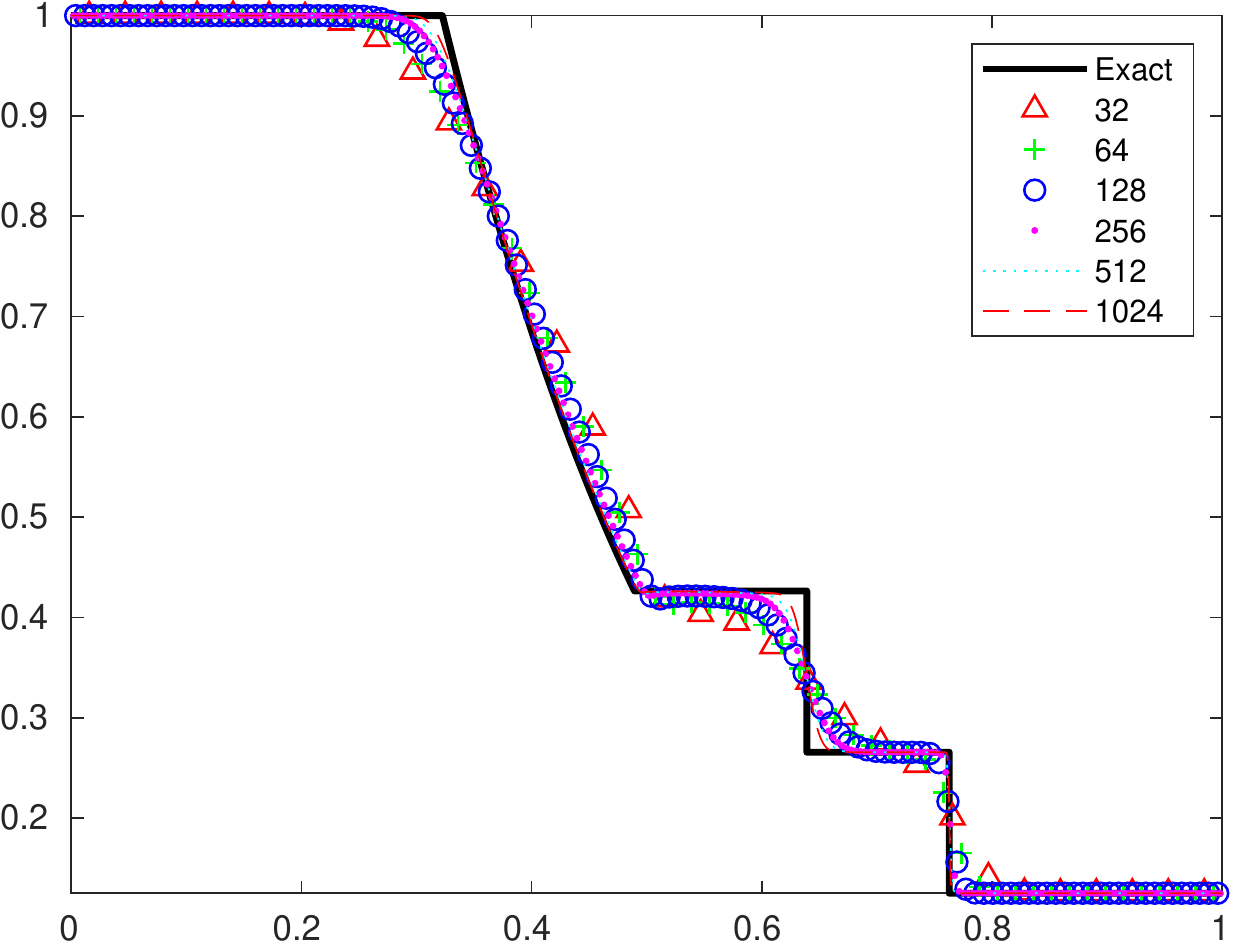}
		\caption{ Godunov - $\vr$}
	\end{subfigure}	
	\begin{subfigure}{0.46\textwidth}
		\includegraphics[width=\textwidth]{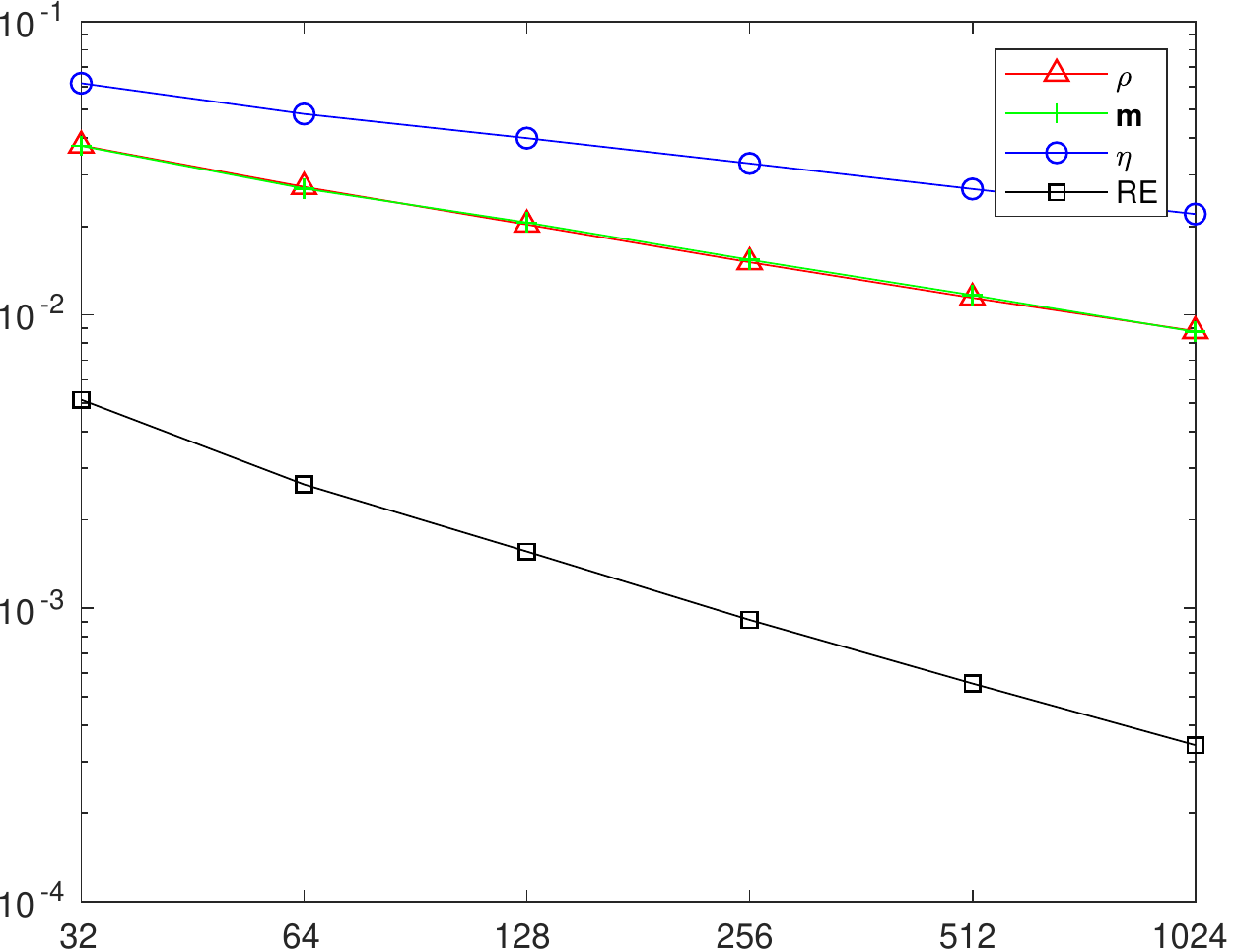}
		\caption{ Godunov - error}
	\end{subfigure}	\\
	\begin{subfigure}{0.45\textwidth}
		\includegraphics[width=\textwidth]{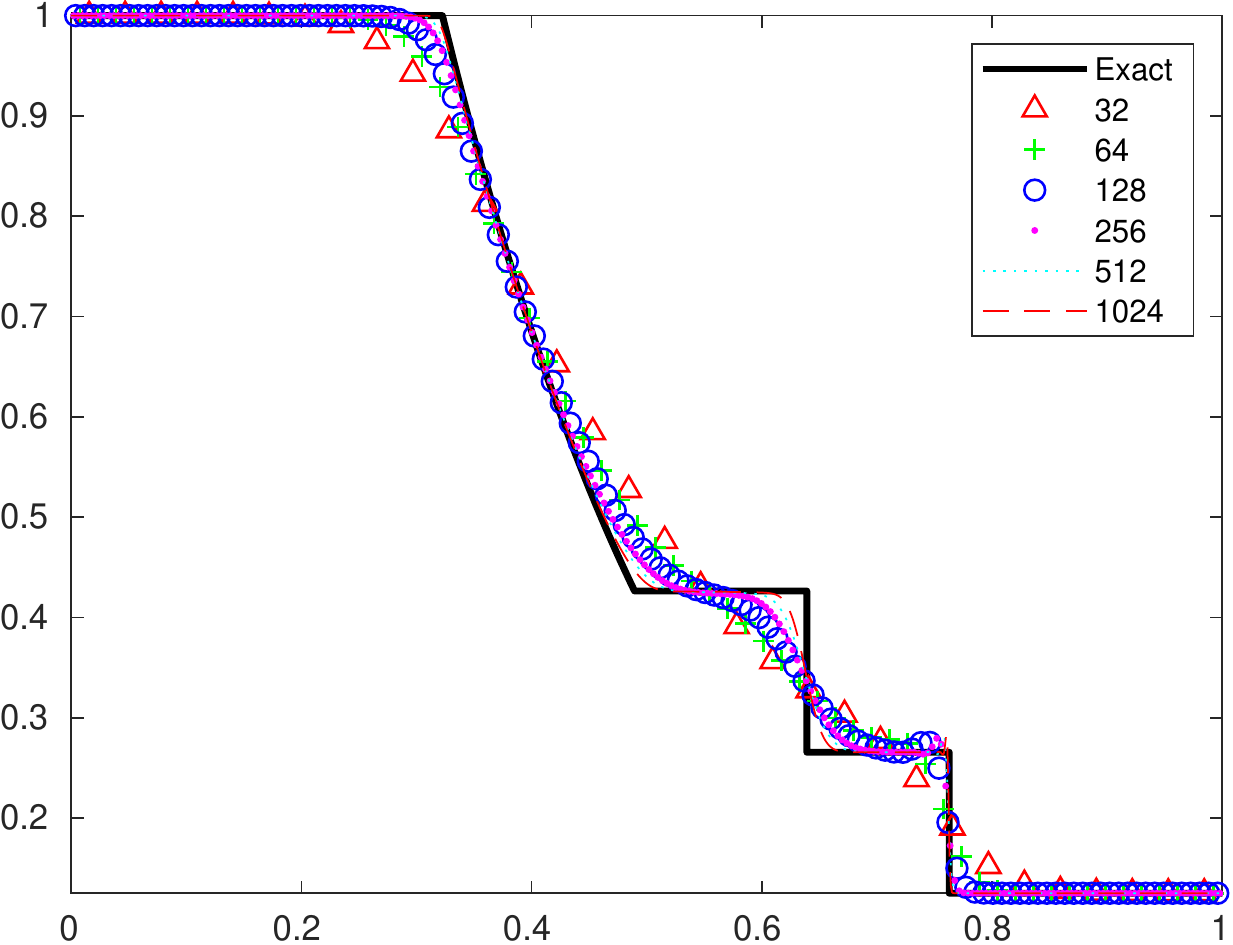}
		\caption{ VFV - $\vr$}
	\end{subfigure}	
	\begin{subfigure}{0.46\textwidth}
		\includegraphics[width=\textwidth]{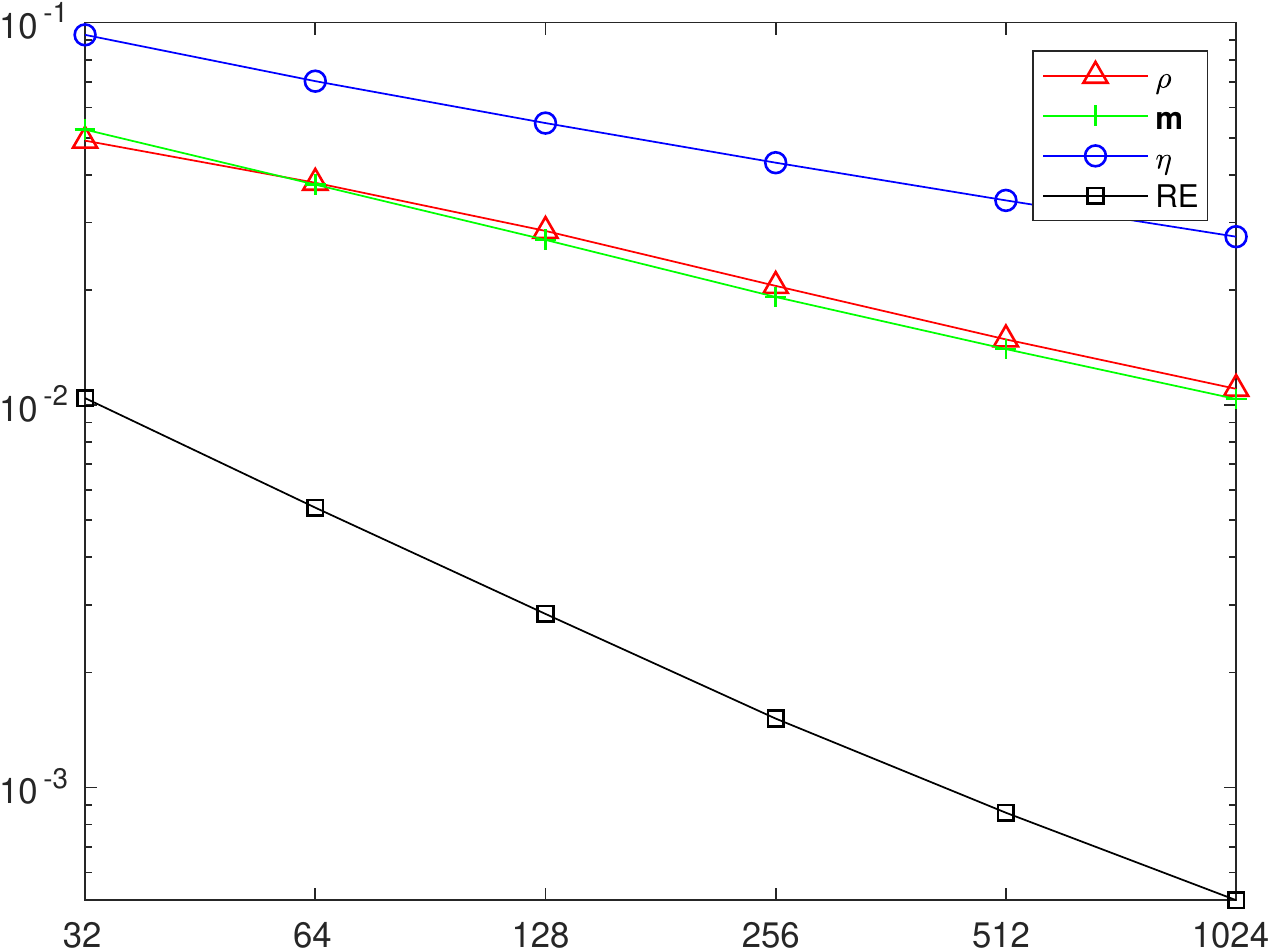}
		\caption{VFV - error}
	\end{subfigure}	
	\caption{\small{Example \ref{example:1D-Sod}: density $\vr$ and errors obtained on different meshes.}}\label{figure:1D-Sod}
\end{figure}

\begin{table}[htbp]
	\centering
	\caption{Example \ref{example:1D-Sod}: errors and convergence rates of $\vr, \vm,\eta,\E$ of the Godunov and VFV methods. } \label{table:1D-Sod}
	\begin{tabular}{|c|cc|cc|cc|cc|}
		\hline
		\multirow{2}{*}{$n$} & \multicolumn{2}{c|}{ density } & \multicolumn{2}{c|}{ momentum  } & \multicolumn{2}{c|}{ entropy } & \multicolumn{2}{c|}{ relative energy}  \\
		\cline{2-9}
		& error & order &  error & order & error & order &  error & order  \\
		\hline
		\hline
		\multicolumn{9}{|c|}{ \bf{ Godunov}} \\
		\hline
		\hline
32 & 0.0378 & - & 0.0376 & - & 0.0615 & - & 0.005135 & -   \\
64 & 0.0273 & 0.4693 & 0.0269 & 0.4819 & 0.0484 & 0.3481 & 0.002642 & 0.9587 \\
128 & 0.0203 & 0.4260 & 0.0206 & 0.3855 & 0.0400 & 0.2735 & 0.001561 & 0.7594 \\
256 & 0.0151 & 0.4268 & 0.0154 & 0.4217 & 0.0328 & 0.2865 & 0.000913 & 0.7741 \\
512 & 0.0114 & 0.4025 & 0.0117 & 0.4003 & 0.0268 & 0.2895 & 0.000554 & 0.7202 \\
1024 & 0.0088 & 0.3773 & 0.0087 & 0.4153 & 0.0221 & 0.2831 & 0.000342 & 0.6978 \\
		\hline
		\hline
		\multicolumn{9}{|c|}{ \bf{ VFV}} \\
		\hline
		\hline
32 & 0.0491 & - & 0.0525 & - & 0.0929 & - & 0.010427 & - \\
64 & 0.0381 & 0.3658 & 0.0377 & 0.4779 & 0.0703 & 0.4023 & 0.005392 & 0.9514 \\
128 & 0.0285 & 0.4183 & 0.0270 & 0.4802 & 0.0546 & 0.3641 & 0.002844 & 0.9231\\
256 & 0.0205 & 0.4774 & 0.0192 & 0.4958 & 0.0430 & 0.3441 & 0.001514 & 0.9090\\
512 & 0.0148 & 0.4650 & 0.0140 & 0.4524 & 0.0343 & 0.3274 & 0.000859 & 0.8176 \\
1024 & 0.0110 & 0.4293 & 0.0104 & 0.4337 & 0.0276 & 0.3153 & 0.000508 & 0.7595 \\
		\hline
	\end{tabular}
\end{table}

\subsection{Two dimensional experiments}\label{sec_exp:2D}
In this section we present four two-dimensional Riemann problems. The computational domain is taken as $[0,1]^2$. 
Here the exact solution $\tvU$ used in the relative energy is taken as the reference solution computed on the uniform mesh of $4096^2$ cells.

\begin{Example}\label{example:2D-RP-3}\rm 
The first 2D Riemann problem describes the interaction of four rarefaction waves. 
The initial data are given by
\begin{equation*}
(\vr ,  u , v, p)(x,0)
\; = \; \begin{cases}
(1 ,\, 0 ,\, 0 ,\, 1 ) , & x > 0.5,\,y>0.5, \\
(0.5197 ,\,  -0.7259,\, 0 , \, 0.4) , & x<0.5,\,y>0.5, \\
(1 ,\,  -0.7259,\, -0. 7259, \, 1) , & x<0.5,\,y<0.5, \\
(0.5197 ,\,  0,\, -0.7259 , \, 0.4) , & x>0.5,\,y<0.5. 
\end{cases}
\end{equation*}
In this example the final time is set to $T = 0.2$. 
Figure \ref{figure:2D-RP-3}(a) and (c) show the density $\vr$ obtained by the Godunov and VFV method on a mesh with $1024^2$ cells. Moreover, Figure \ref{figure:2D-RP-3}(b) and (d) show the $L^2$-errors of $\vr, \vm, \eta$ and $L^1$-norm of $\E$ on different meshes, see also Table \ref{table:2D-RP-3}. 

The numerical results show that the convergence rates of $\vr, \vm, \eta$ (resp. $\E$) are slightly better than $1/2$ (resp. 1). 
This may indicate that our rigorous error estimates are suboptimal in the case of finitely many rarefaction waves.
\end{Example}

\begin{figure}[htbp]
	\setlength{\abovecaptionskip}{0.cm}
	\setlength{\belowcaptionskip}{-0.cm}
	\centering
	\begin{subfigure}{0.4\textwidth}
		\includegraphics[width=\textwidth]{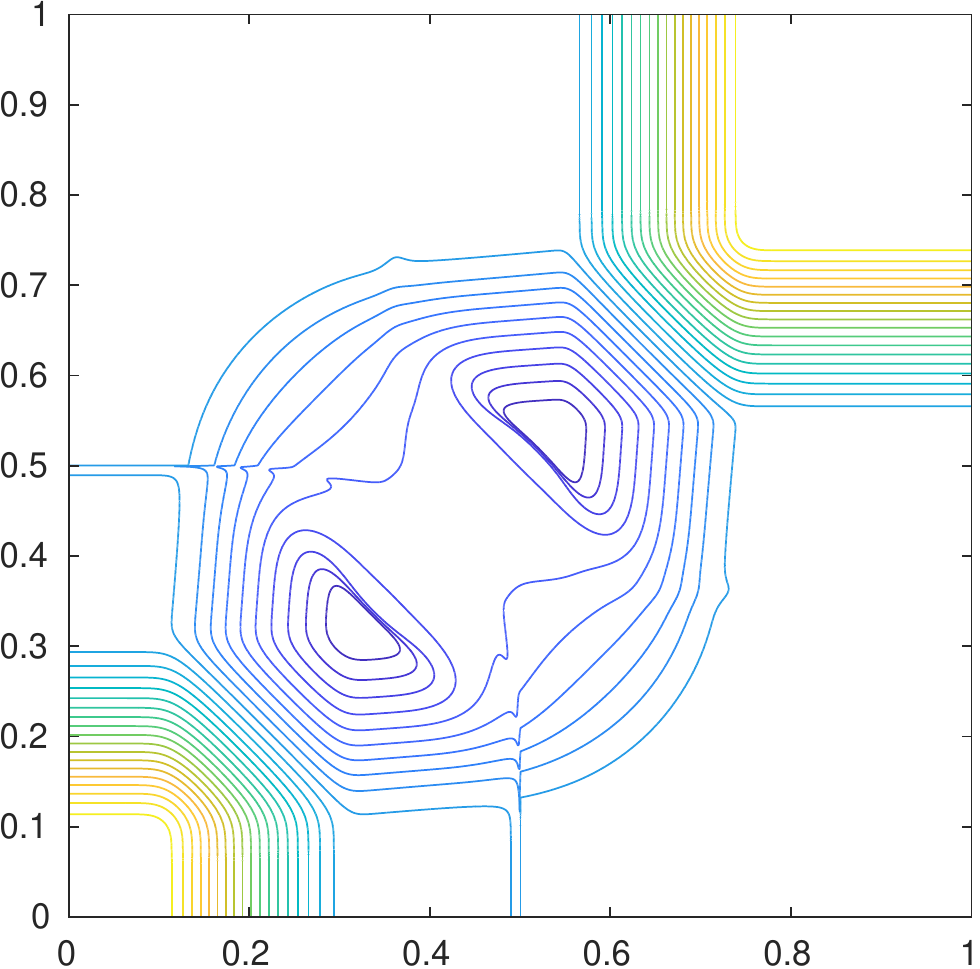}
		\caption{ \bf Godunov - $\vr$}
	\end{subfigure}	
	\begin{subfigure}{0.5\textwidth}
		\includegraphics[width=\textwidth]{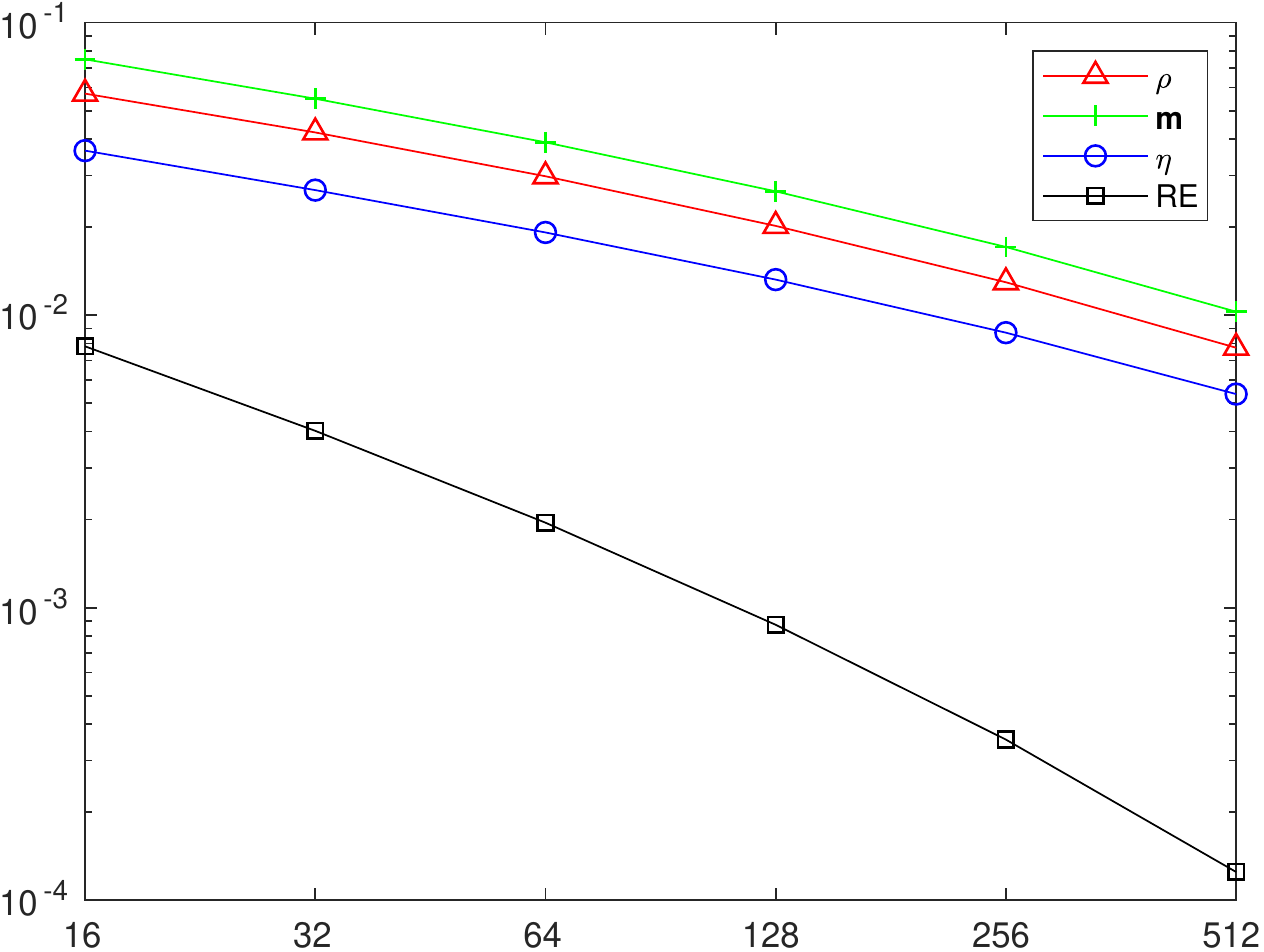}
		\caption{\bf Godunov - error}
	\end{subfigure}	\\
	\begin{subfigure}{0.4\textwidth}
		\includegraphics[width=\textwidth]{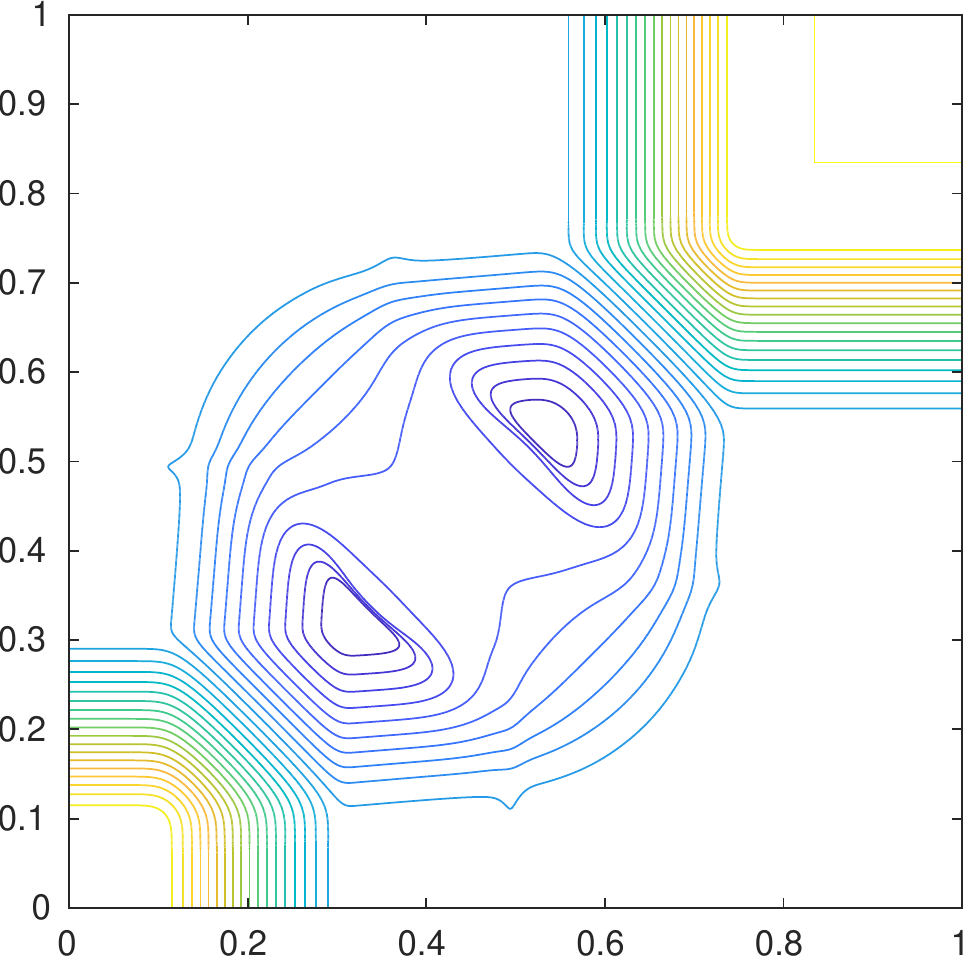}
		\caption{ \bf VFV - $\vr$}
	\end{subfigure}	
	\begin{subfigure}{0.5\textwidth}
		\includegraphics[width=\textwidth]{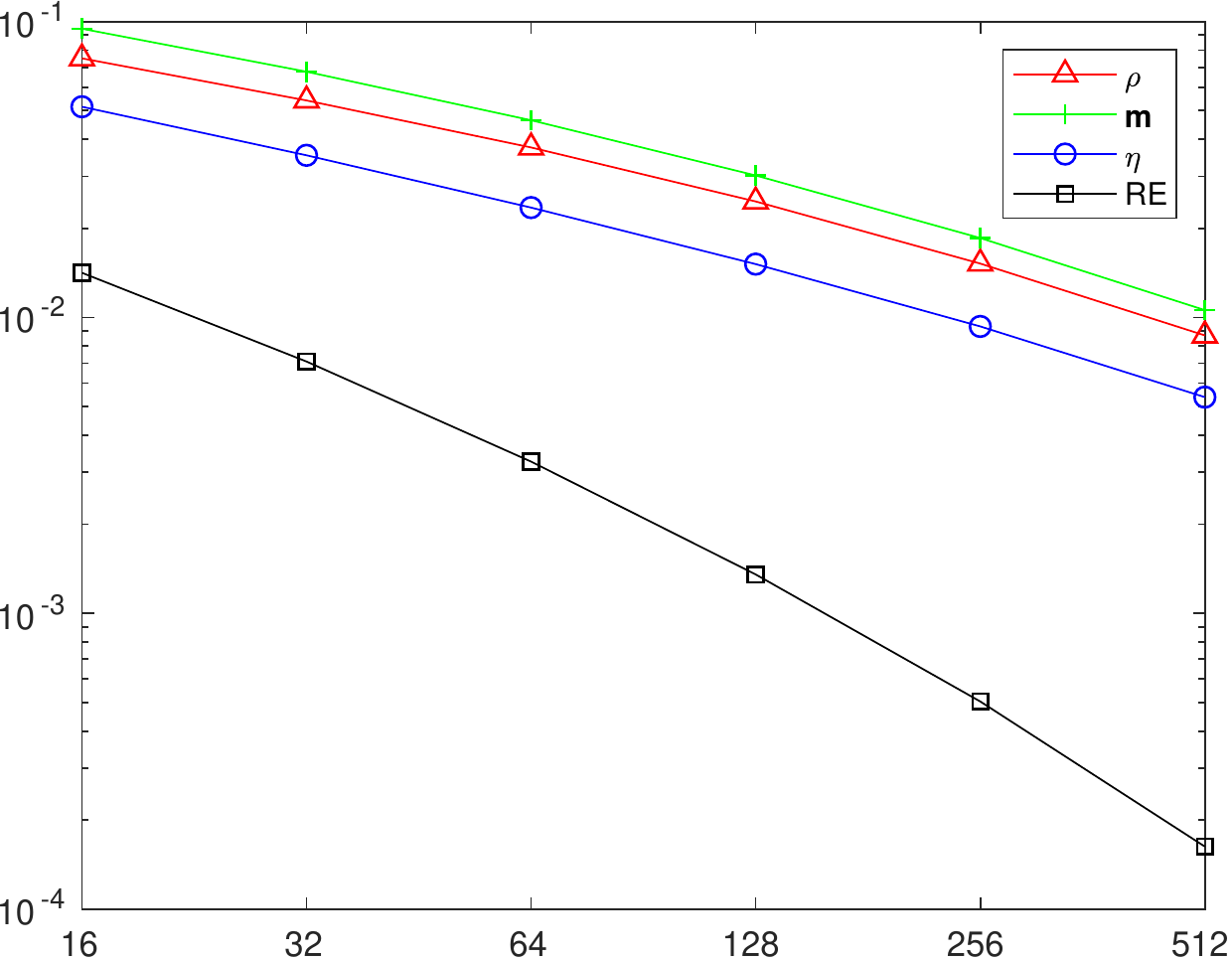}
		\caption{\bf VFV - error}
	\end{subfigure}	
	\caption{\small{Example \ref{example:2D-RP-3}: density on a mesh with $1024^2$ cells and errors at $T=0.2$.}}\label{figure:2D-RP-3}
\end{figure}

\begin{table}[htbp]
	\centering
	\caption{Example \ref{example:2D-RP-3}: errors and convergence rates of $\vr, \vm,\eta,\E$ of the Godunov and VFV methods. } \label{table:2D-RP-3}
	\begin{tabular}{|c|cc|cc|cc|cc|}
		\hline
		\multirow{2}{*}{$n$} & \multicolumn{2}{c|}{ density } & \multicolumn{2}{c|}{ momentum } & \multicolumn{2}{c|}{ entropy } & \multicolumn{2}{c|}{ relative energy}  \\
		\cline{2-9}
		& error & order &  error & order & error & order &  error & order  \\
		\hline
		\hline
		\multicolumn{9}{|c|}{ \bf{ Godunov}} \\
		\hline
		\hline
16 & 0.0572 & - & 0.0749 & - & 0.0365 & - & 0.007821 & - \\
32 & 0.0421 & 0.4408 & 0.0549 & 0.4475 & 0.0267 & 0.4482 & 0.004021 & 0.9597 \\
64 & 0.0298 & 0.4975 & 0.0390 & 0.4950 & 0.0192 & 0.4808 & 0.001952 & 1.0430 \\
128 & 0.0202 & 0.5636 & 0.0265 & 0.5567 & 0.0132 & 0.5354 & 0.000874 & 1.1594 \\
256 & 0.0129 & 0.6402 & 0.0171 & 0.6316 & 0.0087 & 0.6026 & 0.000354 & 1.3038 \\
512 & 0.0077 & 0.7434 & 0.0103 & 0.7353 & 0.0054 & 0.6973 & 0.000125 & 1.5033 \\
		\hline
		\hline
		\multicolumn{9}{|c|}{ \bf{ VFV}} \\
		\hline
		\hline
16 & 0.0751 & - & 0.0946 & - & 0.0515 & - & 0.014156 & -  \\
32 & 0.0541 & 0.4729 & 0.0677 & 0.4823 & 0.0353 & 0.5451 & 0.007097 & 0.9962 \\
64 & 0.0375 & 0.5276 & 0.0464 & 0.5454 & 0.0235 & 0.5868 & 0.003257 & 1.1237 \\
128 & 0.0247 & 0.6061 & 0.0302 & 0.6195 & 0.0151 & 0.6347 & 0.001354 & 1.2666 \\
256 & 0.0152 & 0.6976 & 0.0186 & 0.7026 & 0.0093 & 0.6997 & 0.000504 & 1.4263 \\
512 & 0.0087 & 0.8063 & 0.0106 & 0.8093 & 0.0054 & 0.7938 & 0.000163 & 1.6287 \\
		\hline
	\end{tabular}
\end{table}

\begin{Example}\label{example:2D-RP-1}\rm	
The initial data of the second 2D Riemann problem are given by 
	\begin{equation*}
	(\vr ,  u , v, p)(x,0)
	=  \begin{cases}
	(0.5 ,\, 0.5 ,\, -0.5 ,\, 5 ) , & x > 0.5,\,y>0.5, \\
	(1 ,\,  0.5,\, 0.5 , \, 5) , & x<0.5,\,y>0.5, \\
	(2 ,\,  -0.5,\, 0.5 , \, 5) , & x<0.5,\,y<0.5, \\
	(1.5 ,\,  -0.5,\, -0.5 , \, 5) , & x>0.5,\,y<0.5.
	\end{cases}
	\end{equation*}
The exact solution consists of four interacting contact discontinuities yielding vortex sheets with negative signs. 
We simulate till $T=0.2$. 
Figure \ref{figure:2D-RP-1}(a) and (c)  show the density obtained by the Godunov method and the VFV method on a mesh with $1024^2$ cells. The $L^2$-errors of $(\vr, \vm, \eta)$ as well as the $L^1$-norm of $\E$ are shown in Figure~\ref{figure:2D-RP-1}(b) and (d), see also Table \ref{table:2D-RP-1}. 
	
Numerical results indicate that  $(\vr, \vm, \eta)$  converges with the convergence rate about $1/2$ and the convergence rate for $\E$ is approximately $1$. 
It seems that our theoretical results for the convergence rates obtained for the strong exact solutions practically holds also for some discontinuous (weak) solutions. 
\end{Example}

\begin{figure}[htbp]
	\setlength{\abovecaptionskip}{0.cm}
	\setlength{\belowcaptionskip}{-0.cm}
	\centering
	\begin{subfigure}{0.4\textwidth}
		\includegraphics[width=\textwidth]{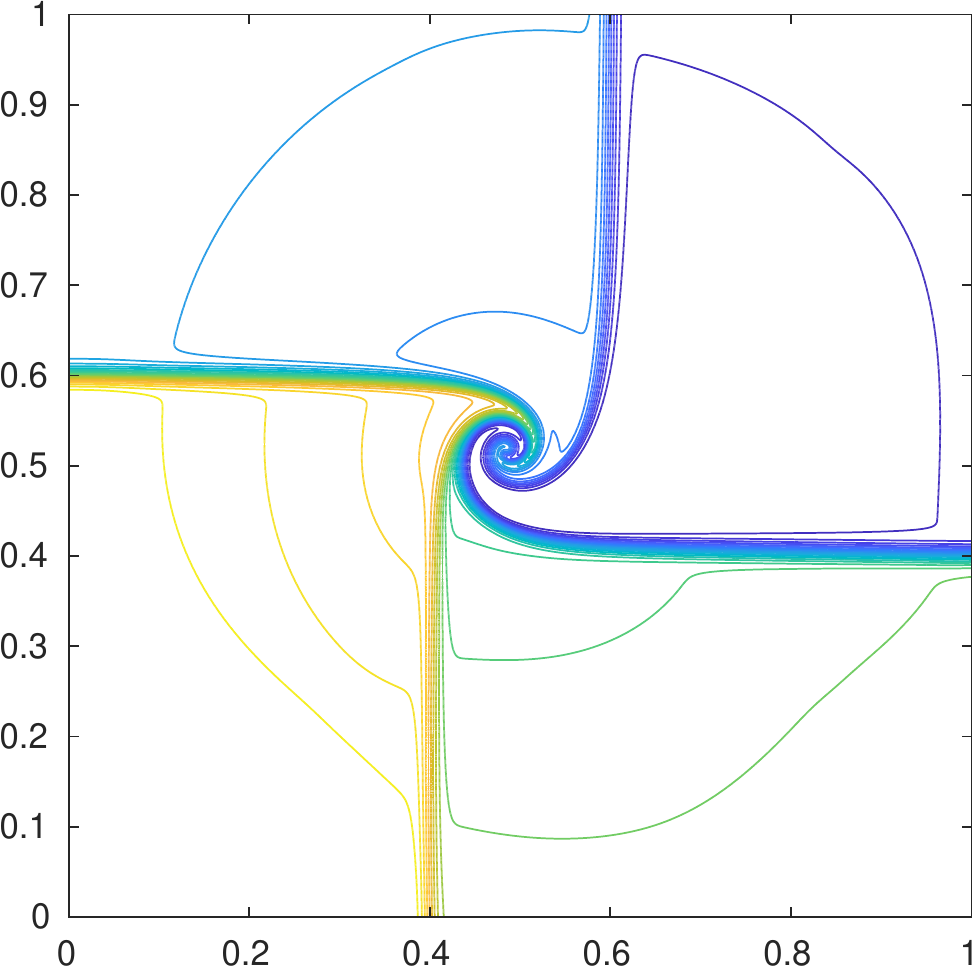}
		\caption{ \bf Godunov - $\vr$}
	\end{subfigure}	
	\begin{subfigure}{0.5\textwidth}
		\includegraphics[width=\textwidth]{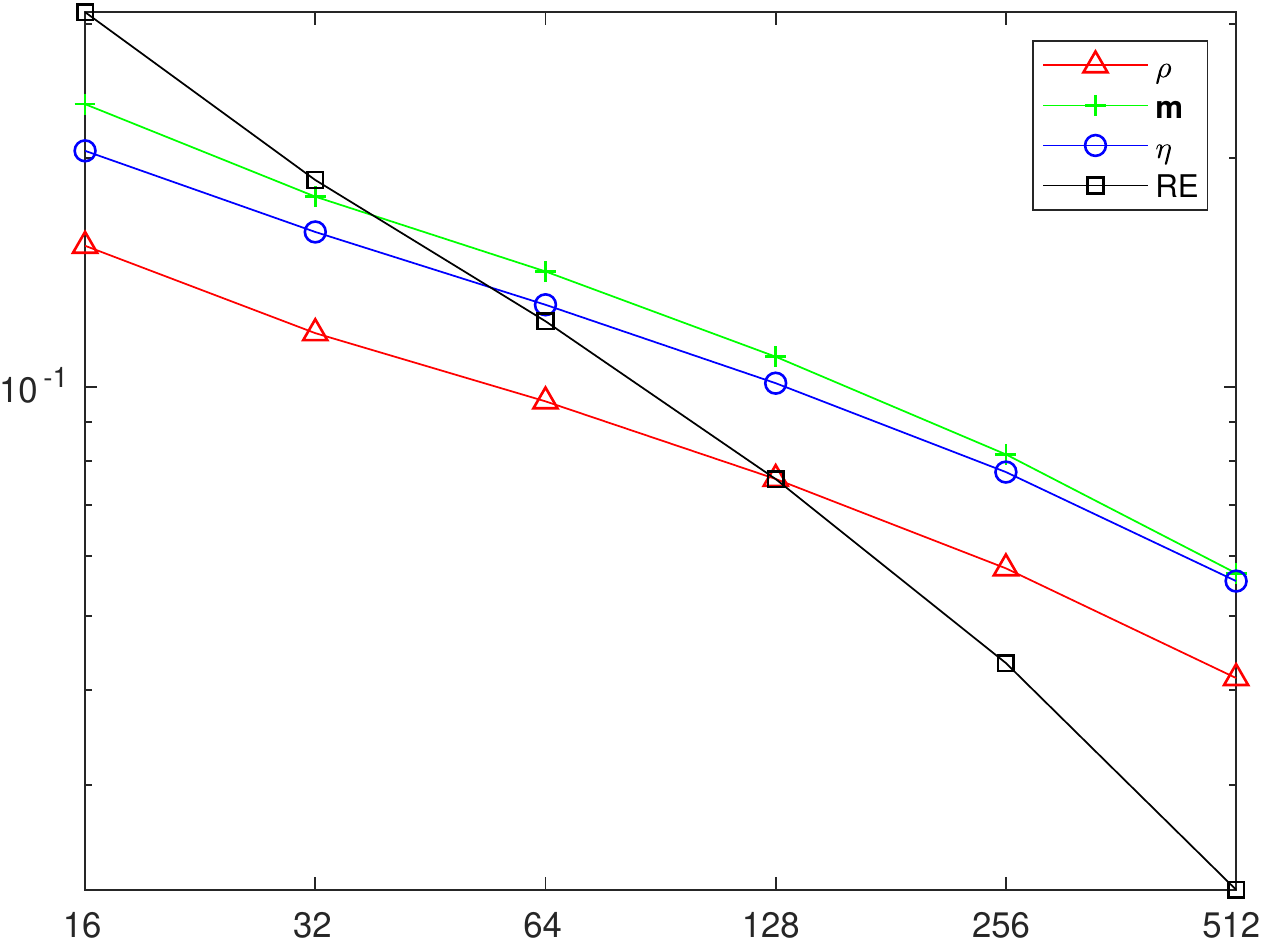}
		\caption{\bf Godunov - error}
	\end{subfigure}	\\
	\begin{subfigure}{0.4\textwidth}
		\includegraphics[width=\textwidth]{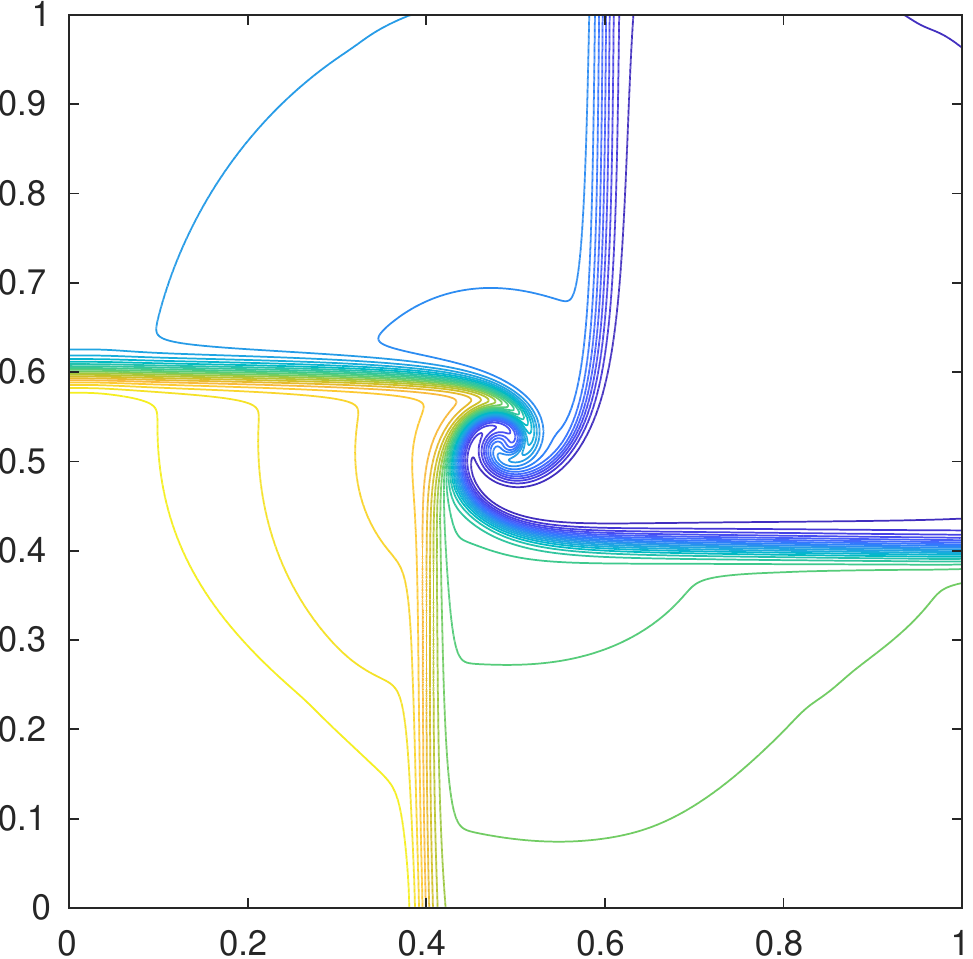}
		\caption{ \bf VFV - $\vr$}
	\end{subfigure}	
	\begin{subfigure}{0.5\textwidth}
		\includegraphics[width=\textwidth]{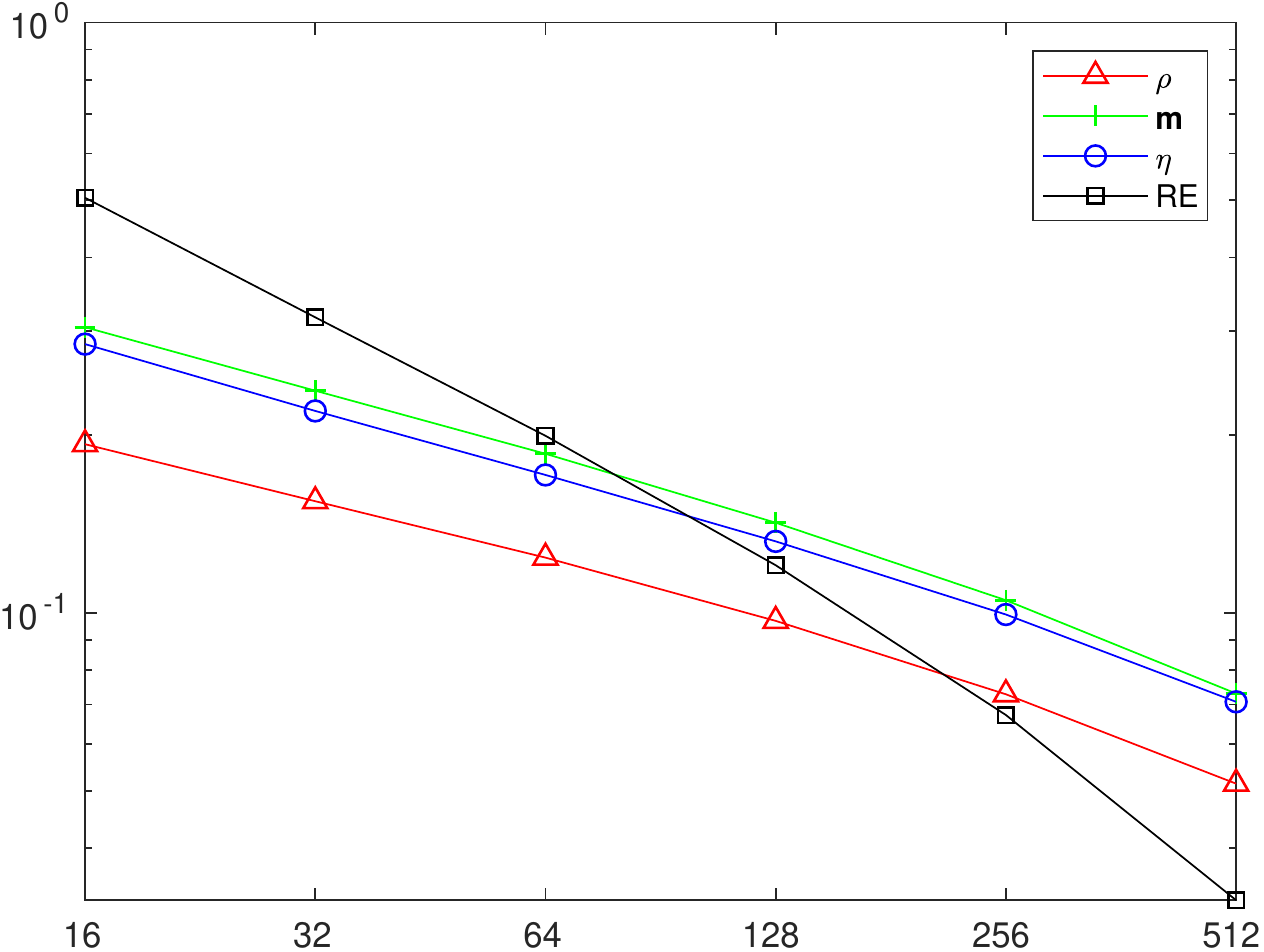}
		\caption{\bf VFV - error}
	\end{subfigure}	
	\caption{\small{Example \ref{example:2D-RP-1}: density on a mesh with $1024^2$ cells and errors obtained on different meshes.}}\label{figure:2D-RP-1}
\end{figure}

\begin{table}[htbp]
	\centering
	\caption{Example \ref{example:2D-RP-1}: errors and convergence rates of $\vr, \vm,\eta,\E$ of the Godunov and VFV methods.  } \label{table:2D-RP-1}
	\begin{tabular}{|c|cc|cc|cc|cc|}
		\hline
		\multirow{2}{*}{$n$} & \multicolumn{2}{c|}{ density } & \multicolumn{2}{c|}{ momentum } & \multicolumn{2}{c|}{ entropy } & \multicolumn{2}{c|}{ relative energy}  \\
		\cline{2-9}
		& error & order &  error & order & error & order &  error & order  \\
		\hline
		\hline
		\multicolumn{9}{|c|}{ \bf{ Godunov}} \\
		\hline
		\hline
16 & 0.1534 & - & 0.2355 & - & 0.2045 & - & 0.311123 & -   \\
32 & 0.1177 & 0.3816 & 0.1780 & 0.4040 & 0.1599 & 0.3543 & 0.187175 & 0.7331 \\
64 & 0.0958 & 0.2979 & 0.1419 & 0.3267 & 0.1283 & 0.3180 & 0.122058 & 0.6168 \\
128 & 0.0757 & 0.3390 & 0.1096 & 0.3724 & 0.1012 & 0.3425 & 0.075685 & 0.6895 \\
256 & 0.0578 & 0.3903 & 0.0816 & 0.4266 & 0.0773 & 0.3881 & 0.043366 & 0.8034 \\
512 & 0.0414 & 0.4792 & 0.0569 & 0.5190 & 0.0556 & 0.4761 & 0.021832 & 0.9901 \\
		\hline
		\hline
		\multicolumn{9}{|c|}{ \bf{ VFV}} \\
		\hline
		\hline
16 & 0.1932 & - & 0.3048 & - & 0.2854 & - & 0.505011 & -  \\
32 & 0.1547 & 0.3206 & 0.2380 & 0.3572 & 0.2199 & 0.3760 & 0.316830 & 0.6726 \\
64 & 0.1241 & 0.3173 & 0.1861 & 0.3548 & 0.1714 & 0.3601 & 0.199627 & 0.6664 \\
128 & 0.0970 & 0.3558 & 0.1422 & 0.3878 & 0.1323 & 0.3737 & 0.120510 & 0.7281 \\
256 & 0.0729 & 0.4129 & 0.1051 & 0.4360 & 0.0994 & 0.4115 & 0.067199 & 0.8426 \\
512 & 0.0514 & 0.5029 & 0.0731 & 0.5248 & 0.0708 & 0.4910 & 0.032644 & 1.0416 \\
		\hline
	\end{tabular}
\end{table}

\begin{Example}\label{example:2D-RP-2}\rm
The initial data of third 2D Riemann problem are given by
\begin{equation*}
(\vr ,  u , v, p)(x,0)
\; = \; \begin{cases}
(1.5 ,\, 0 ,\, 0 ,\, 1.5 ) , & x > 0.5,\,y>0.5, \\
(0.5323 ,\,  1.206,\, 0 , \, 0.3) , & x<0.5,\,y>0.5,\\
(0.138 ,\,  1.206,\, 1.206, \, 	0.029) , & x<0.5,\,y<0.5,\\
(0.5323 ,\,  0,\, 1.206 , \, 0.3) , & x>0.5,\,y<0.5,
\end{cases}
\end{equation*}
which describes the interaction of four shock waves. 
In this example the final time is set to $T = 0.35$. 
Figure \ref{figure:2D-RP-2} shows the density on a mesh with $1024^2$ cells and errors of $(\vr, \vm, \eta)$ and $\E$ obtained on different meshes. Table \ref{table:2D-RP-2} lists the errors and convergence rate.  

From these numerical results we see that  $(\vr, \vm, \eta)$  converges with a ratio between $1/4$ and $1/2$ and $\E$ converges to a ratio between $1/2$ and $1$. 
\end{Example}

\begin{figure}[htbp]
	\setlength{\abovecaptionskip}{0.cm}
	\setlength{\belowcaptionskip}{-0.cm}
	\centering
	\begin{subfigure}{0.4\textwidth}
		\includegraphics[width=\textwidth]{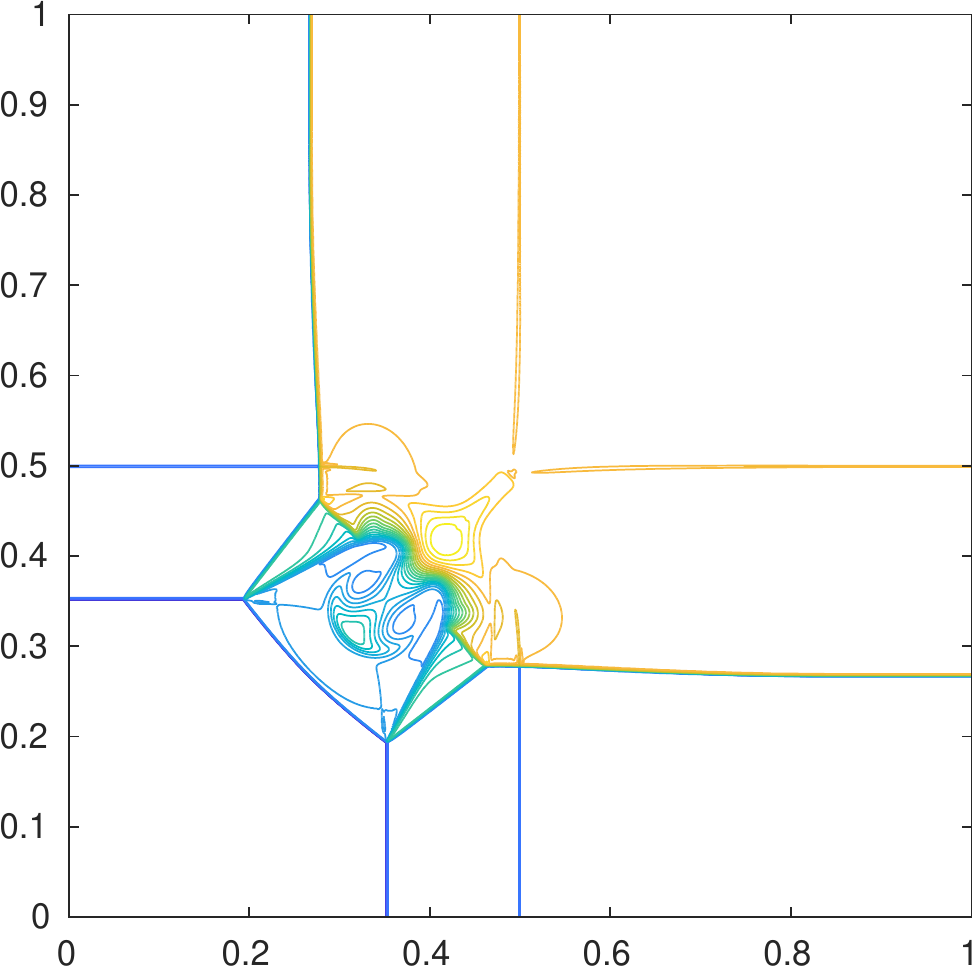}
		\caption{ \bf Godunov - $\vr$}
	\end{subfigure}	
	\begin{subfigure}{0.5\textwidth}
		\includegraphics[width=\textwidth]{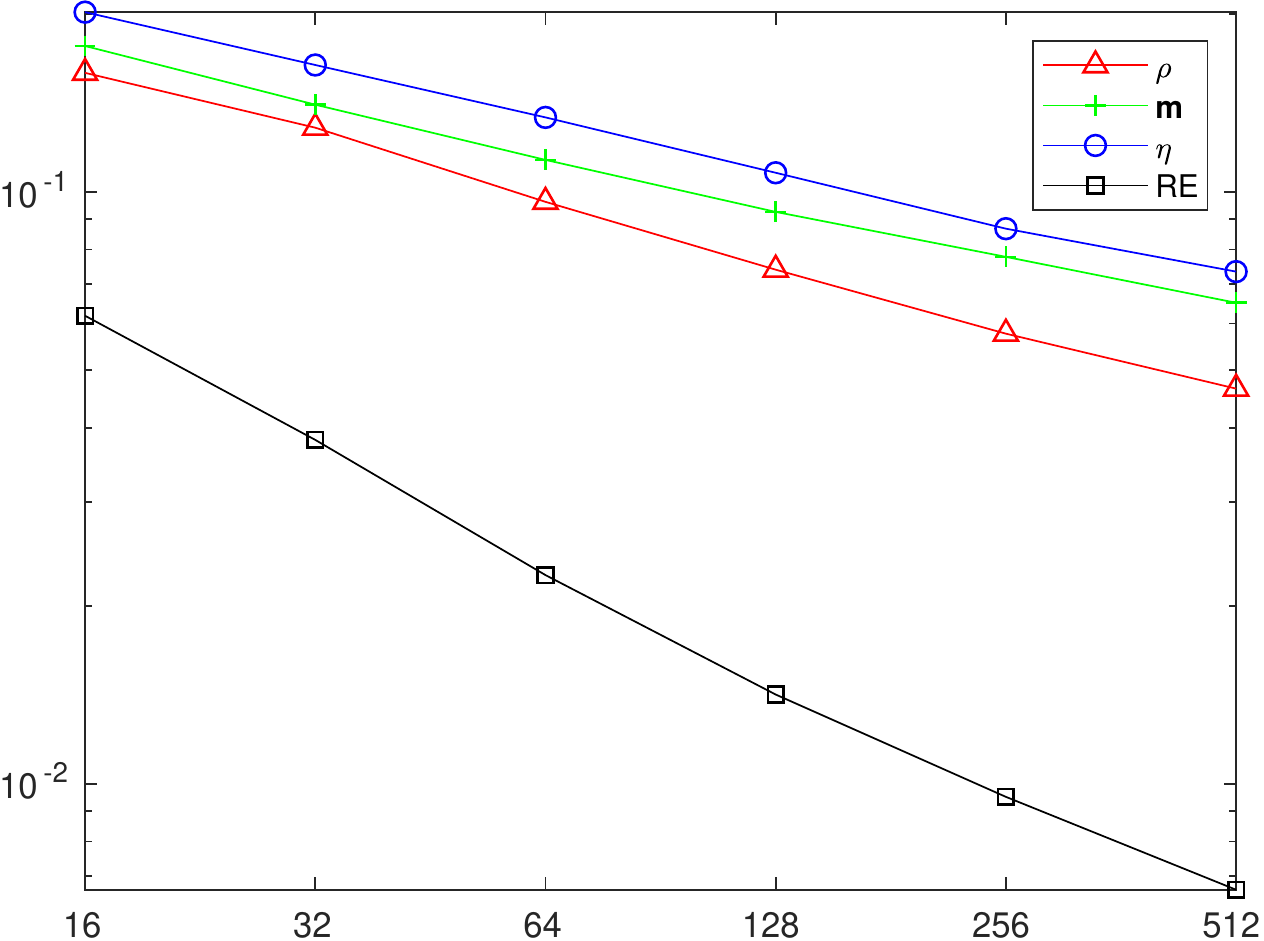}
		\caption{\bf Godunov - error}
	\end{subfigure}	\\
	\begin{subfigure}{0.4\textwidth}
		\includegraphics[width=\textwidth]{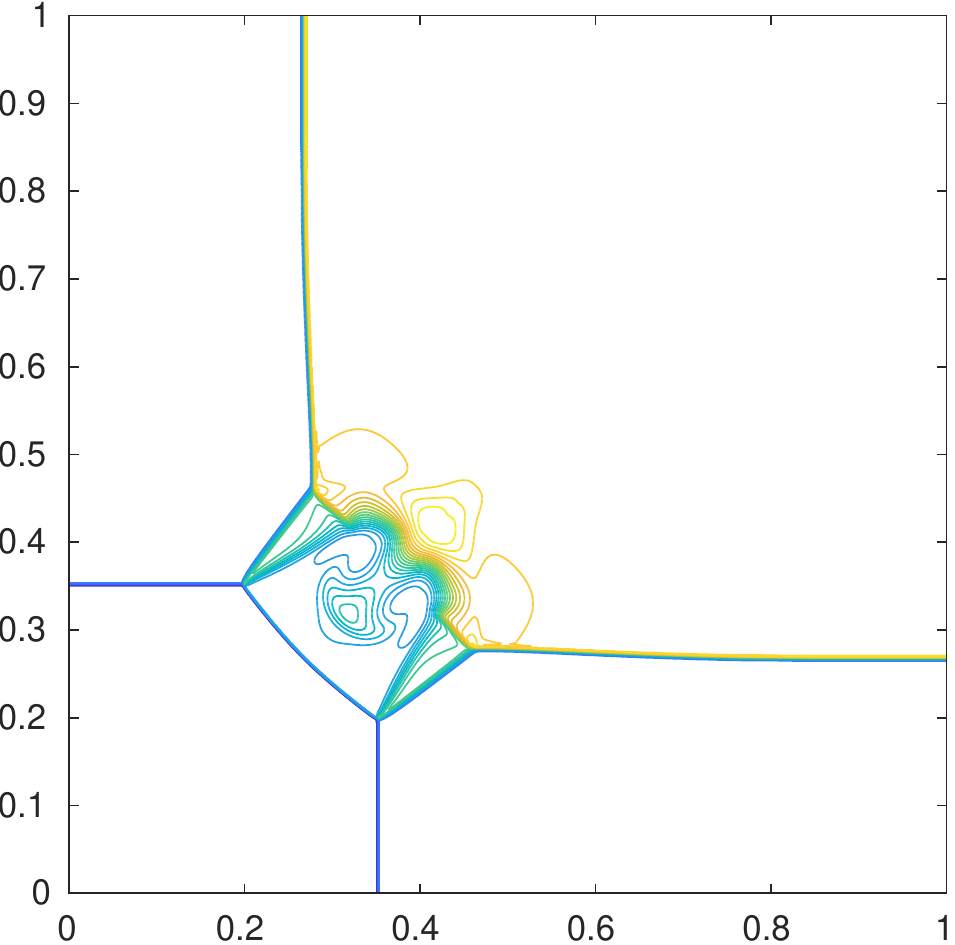}
		\caption{ \bf VFV - $\vr$}
	\end{subfigure}	
	\begin{subfigure}{0.5\textwidth}
		\includegraphics[width=\textwidth]{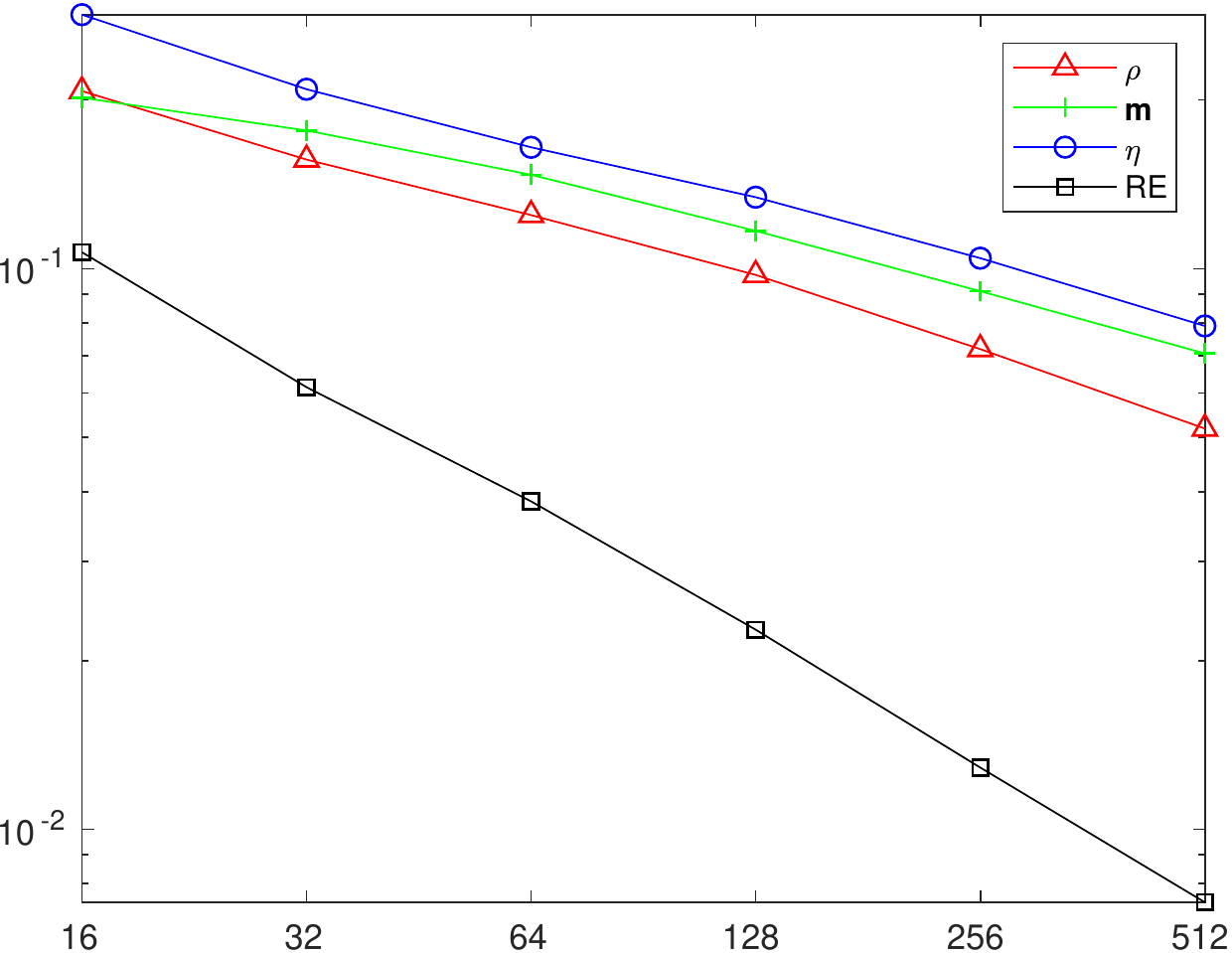}
		\caption{\bf VFV - error}
	\end{subfigure}	
	\caption{\small{Example \ref{example:2D-RP-2}: density on a mesh with $1024^2$ cells and errors obtained on different meshes.}}\label{figure:2D-RP-2}
\end{figure}

\begin{table}[htbp]
	\centering
	\caption{Example \ref{example:2D-RP-2}: errors and convergence rates of $\vr, \vm,\eta,\E$ of the Godunov and VFV methods.  } \label{table:2D-RP-2}
	\begin{tabular}{|c|cc|cc|cc|cc|}
		\hline
		\multirow{2}{*}{$n$} & \multicolumn{2}{c|}{ density } & \multicolumn{2}{c|}{ momentum } & \multicolumn{2}{c|}{ entropy } & \multicolumn{2}{c|}{ relative energy}  \\
		\cline{2-9}
		& error & order &  error & order & error & order &  error & order  \\
		\hline
		\hline
		\multicolumn{9}{|c|}{ \bf{ Godunov}} \\
		\hline
		\hline
16 & 0.1589 & - & 0.1764 & - & 0.2013 & - & 0.061809 & -   \\
32 & 0.1284 & 0.3077 & 0.1404 & 0.3292 & 0.1639 & 0.2964 & 0.038141 & 0.6965 \\
64 & 0.0963 & 0.4160 & 0.1133 & 0.3098 & 0.1337 & 0.2940 & 0.022547 & 0.7584 \\
128 & 0.0739 & 0.3806 & 0.0926 & 0.2917 & 0.1078 & 0.3106 & 0.014170 & 0.6701 \\
256 & 0.0576 & 0.3590 & 0.0777 & 0.2523 & 0.0867 & 0.3137 & 0.009518 & 0.5740 \\
512 & 0.0466 & 0.3084 & 0.0650 & 0.2577 & 0.0734 & 0.2409 & 0.006632 & 0.5212\\
		\hline
		\hline
		\multicolumn{9}{|c|}{ \bf{ VFV}} \\
		\hline
		\hline
16 & 0.2075 & - & 0.2018 & - & 0.2840 & - & 0.107017 & -   \\
32 & 0.1566 & 0.4063 & 0.1765 & 0.1938 & 0.2090 & 0.4420 & 0.061465 & 0.8000 \\
64 & 0.1246 & 0.3290 & 0.1471 & 0.2626 & 0.1647 & 0.3441 & 0.038455 & 0.6766 \\
128 & 0.0975 & 0.3546 & 0.1168 & 0.3325 & 0.1342 & 0.2957 & 0.022700 & 0.7605 \\
256 & 0.0719 & 0.4397 & 0.0912 & 0.3576 & 0.1044 & 0.3614 & 0.012882 & 0.8173 \\
512 & 0.0519 & 0.4707 & 0.0706 & 0.3689 & 0.0790 & 0.4018 & 0.007407 & 0.7985 \\
		\hline
	\end{tabular}
\end{table}

\begin{Example}\label{example:2D-RP-4}\rm
The initial data of the fourth 2D Riemann problem are given by 
\begin{equation*}
(\vr ,  u , v, p)(x,0)
\; = \; \begin{cases}
(0.5313 ,\, 0 ,\, 0 ,\, 0.4 ) , & x > 0.5,\,y>0.5, \\
(1 ,\,  0.7276,\, 0 , \, 1) , & x<0.5,\,y>0.5, \\
(0.8 ,\,  0,\, 0, \, 1) , & x<0.5,\,y<0.5, \\
(1 ,\,  0,\, 0.7276 , \, 1) , & x>0.5,\,y<0.5.
\end{cases}
\end{equation*}
This experiment describes the interaction of four discontinuities (the left and bottom discontinuities are two contact discontinuities and the top and right are two shock waves). 
The final time is set to $T=0.25$.
Figure \ref{figure:2D-RP-4} shows the density obtained by the Godunov and VFV methods on a mesh with $1024^2$ cells, respectively. 
The $L^2$-errors of $\vr, \vm, \eta$, and the $L^1$-norm of  $\E$ obtained on different meshes are presented in  Figure \ref{figure:2D-RP-4} and Table \ref{table:2D-RP-4}.  

These numerical results indicate the convergence rate around $1/2$ for the $L^2$-errors in $(\vr, \vm, \eta)$ and rates around $1$ for the $L^1$-norm in the relative energy $\E$. 
Similarly as in the previous experiments, it seems that the VFV method converges faster than the Godunov method.
\end{Example}

\begin{figure}[htbp]
	\setlength{\abovecaptionskip}{0.cm}
	\setlength{\belowcaptionskip}{-0.cm}
	\centering
	\begin{subfigure}{0.4\textwidth}
		\includegraphics[width=\textwidth]{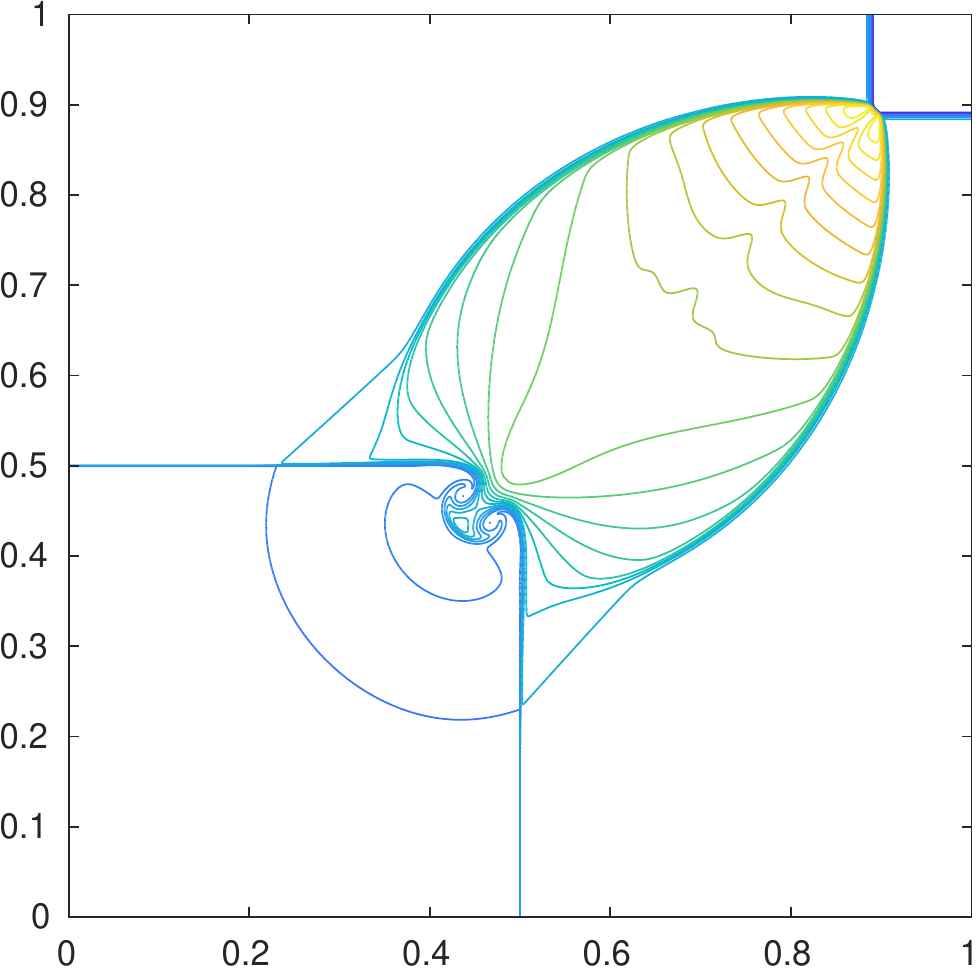}
		\caption{ \bf Godunov - $\vr$}
	\end{subfigure}	
	\begin{subfigure}{0.5\textwidth}
		\includegraphics[width=\textwidth]{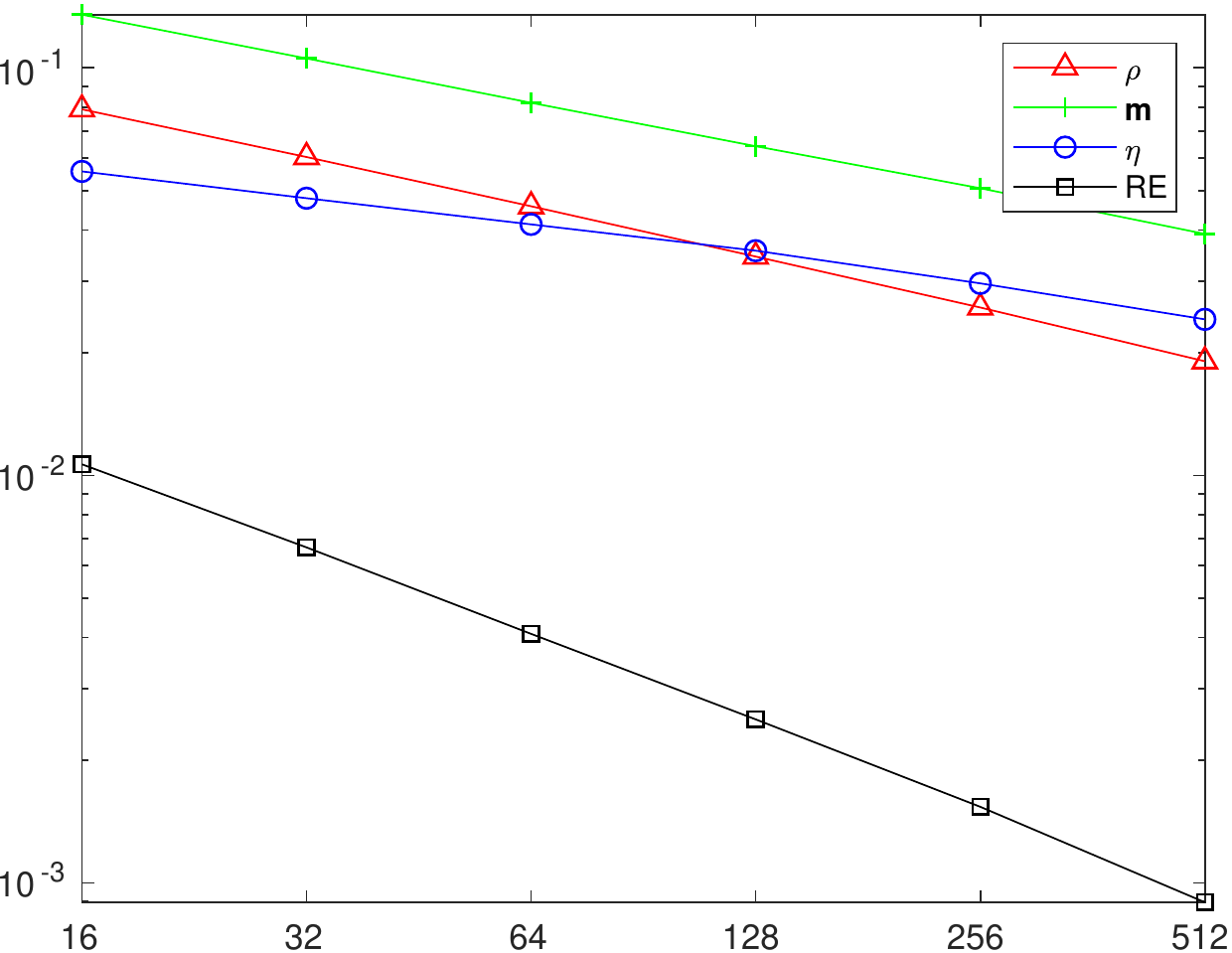}
		\caption{\bf Godunov - error}
	\end{subfigure}	\\
	\begin{subfigure}{0.4\textwidth}
		\includegraphics[width=\textwidth]{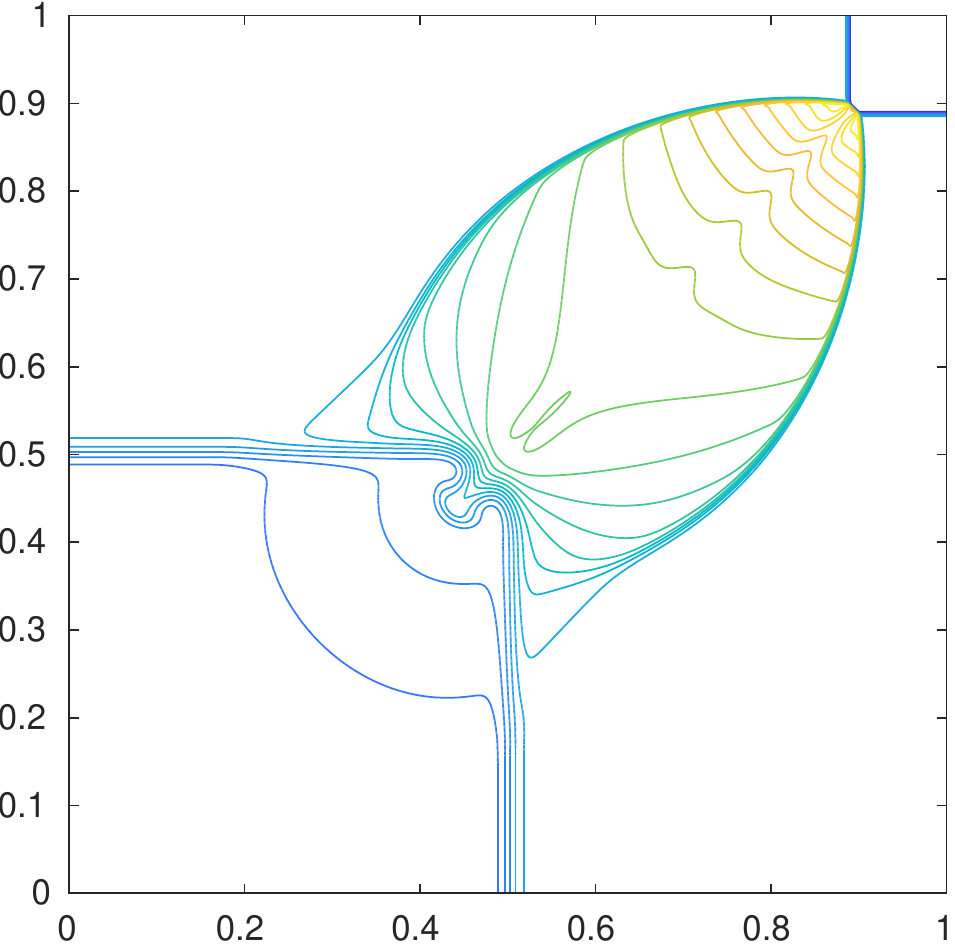}
		\caption{ \bf VFV - $\vr$}
	\end{subfigure}	
	\begin{subfigure}{0.5\textwidth}
		\includegraphics[width=\textwidth]{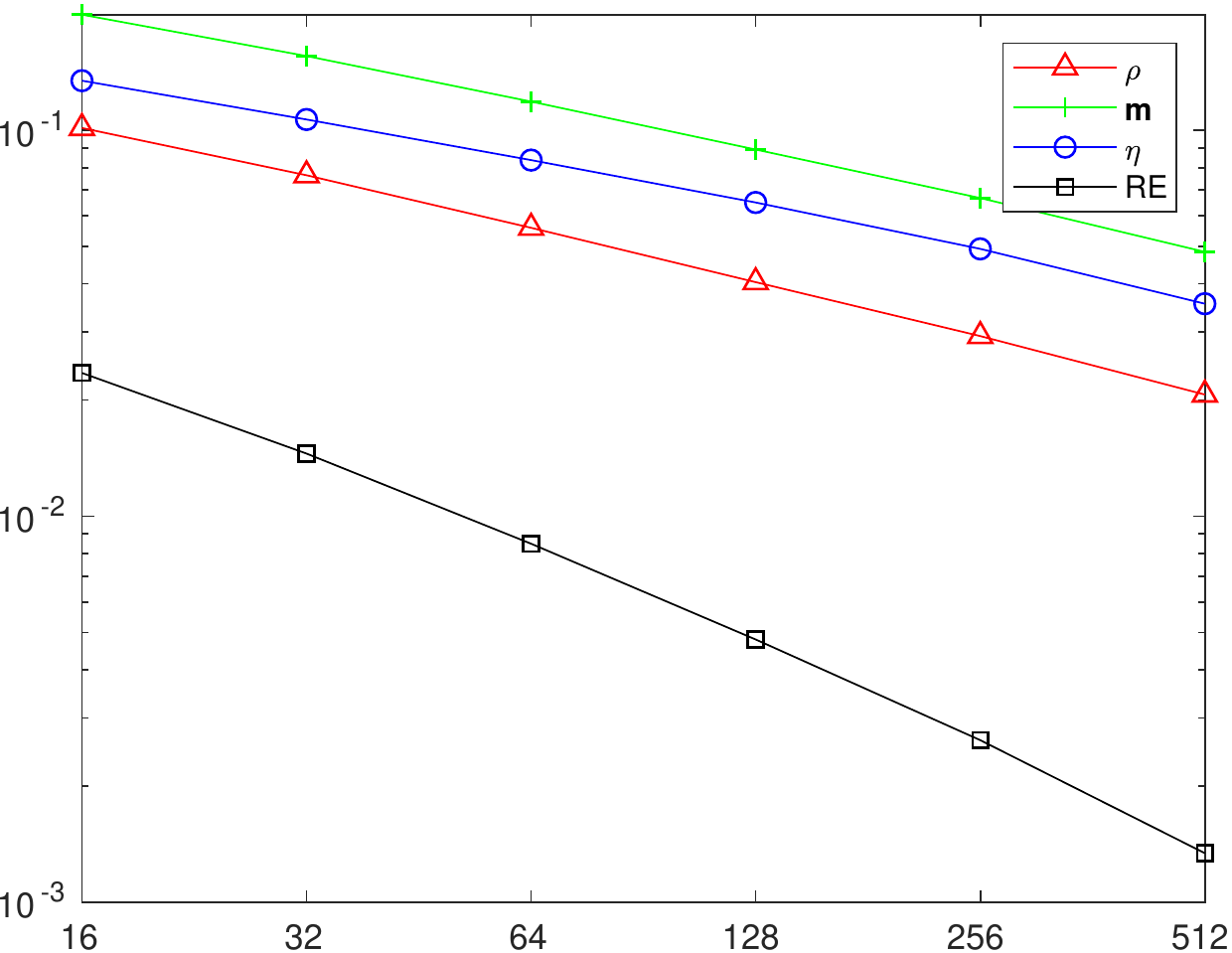}
		\caption{\bf VFV - error}
	\end{subfigure}	
	\caption{\small{Example \ref{example:2D-RP-4}: density on a mesh with $1024^2$ cells and errors obtained on different meshes.}}\label{figure:2D-RP-4}
\end{figure}

\begin{table}[htbp]
	\centering
	\caption{Example \ref{example:2D-RP-4}: errors and convergence rates of $\vr, \vm,\eta,\E$ of the Godunov and VFV methods. } \label{table:2D-RP-4}
	\begin{tabular}{|c|cc|cc|cc|cc|}
		\hline
		\multirow{2}{*}{$n$} & \multicolumn{2}{c|}{ density } & \multicolumn{2}{c|}{ momentum } & \multicolumn{2}{c|}{ entropy } & \multicolumn{2}{c|}{ relative energy}  \\
		\cline{2-9}
		& error & order &  error & order & error & order &  error & order  \\
		\hline
		\hline
		\multicolumn{9}{|c|}{ \bf{ Godunov}} \\
		\hline
		\hline
16 & 0.0791 & - & 0.1351 & - & 0.0557 & - & 0.010648 & -  \\
32 & 0.0604 & 0.3891 & 0.1055 & 0.3567 & 0.0479 & 0.2184 & 0.006658 & 0.6775 \\
64 & 0.0458 & 0.4012 & 0.0821 & 0.3619 & 0.0413 & 0.2134 & 0.004084 & 0.7050 \\
128 & 0.0344 & 0.4103 & 0.0643 & 0.3538 & 0.0356 & 0.2145 & 0.002519 & 0.6971\\
256 & 0.0258 & 0.4152 & 0.0507 & 0.3426 & 0.0296 & 0.2664 & 0.001537 & 0.7128 \\
512 & 0.0191 & 0.4382 & 0.0391 & 0.3724 & 0.0242 & 0.2932 & 0.000896 & 0.7786 \\
		\hline
		\hline
		\multicolumn{9}{|c|}{ \bf{ VFV}} \\
		\hline
		\hline
16 & 0.1013 & - & 0.1992 & - & 0.1343 & - & 0.023507 & -   \\
32 & 0.0764 & 0.4069 & 0.1556 & 0.3559 & 0.1066 & 0.3340 & 0.014532 & 0.6938 \\
64 & 0.0559 & 0.4522 & 0.1186 & 0.3921 & 0.0837 & 0.3493 & 0.008488 & 0.7757 \\
128 & 0.0404 & 0.4676 & 0.0891 & 0.4126 & 0.0650 & 0.3647 & 0.004795 & 0.8240 \\
256 & 0.0293 & 0.4640 & 0.0667 & 0.4174 & 0.0493 & 0.3992 & 0.002630 & 0.8664 \\
512 & 0.0207 & 0.5035 & 0.0484 & 0.4632 & 0.0355 & 0.4719 & 0.001340 & 0.9725 \\
		\hline
	\end{tabular}
\end{table}

\newpage
\section{Conclusion}

In this paper we have analyzed a priori errors between numerical solutions obtained by the Godunov method and the strong exact solution for the multidimensional Euler system via the relative energy.  
Assuming that there exist a uniform lower bound on the density and an upper bound on the energy, we showed that the $L^1$-norm of the relative energy  is equivalent to the $L^2$-norm of errors of the numerical solutions, see \eqref{REL2-1}.  
Recalling the consistency formulation proved in \cite{LMY} and applying  Gronwall's lemma,    
we have derived the estimates for the relative energy in Theorem~\ref{Th2}.  
Specifically, the relative energy converges {\it at least} at the rate of $1/2$ in the $L^1$-norm. 
At the same time, the density, momentum and entropy converge {\it at least} at the rate of $1/4$ in the $L^2$-norm. 
Being inspired by the fact that the Godunov method for scalar conservation laws has bounded total variations we have formulated additional hypothesis \eqref{TVB1}. 
If we assume that \eqref{TVB1} holds, the convergence rate of density, momentum and entropy (resp. relative energy) can be improved to at least $1/2$ (resp. $1$), see Theorem \ref{Th3}. 
Finally, we pointed out that our theoretical analysis rigorously holds only for strong solutions, e.g. for a solution that contains finitely many rarefaction waves.

We have experimentally computed convergence rates for several one- and two-dimensional Riemann problems. 
From Example \ref{example:1D-singlewave} and  Example \ref{example:1D-RP-2} containing only rarefaction waves, we observed that the convergence rate of density, momentum and entropy (resp. relative energy) is slightly higher than $1/2$ (resp.~$1$), which is consistent with the theoretical results presented in Theorem~\ref{Th3}. 
Our numerical experiments for the Riemann problems with discontinuous solutions show that the convergence rate of the Godunov method are about $1/4$ for the contact wave and about $1/2$ for the shock wave. 
In future it will be interesting to analyze theoretically the convergence rates towards a weak exact solution containing shock and contact wave.

\section*{Funding}
\noindent M.L. has been funded by the Deutsche Forschungsgemeinschaft (DFG, German Research Foundation) - Project number 233630050 - TRR 146 as well as by  TRR 165 Waves to Weather. She is grateful to the Gutenberg Research College for supporting her research. 

	The research of B.S. leading to these results has received funding from the Czech Sciences Foundation (GA\v CR), Grant Agreement 21-02411S. The Institute of Mathematics of the Academy of Sciences of the Czech Republic is supported by RVO:67985840.

	The research of Y. Y. was funded by Sino-German (CSC-DAAD) Postdoc Scholarship Program in 2020 - Project number 57531629.

\section*{Availability of data and materials}
The datasets supporting the conclusions of this article are included within the article.


\begin{thebibliography}{30}
\bibitem{Brezina-Feireisl:2018a}
J.~B\v{r}ezina and E.~Feireisl.
\newblock Measure-valued solutions to the complete {Euler} system.
\newblock {\em J. Math. Soc. Japan}, 70(4):1227 - 1245, 2018.


\bibitem{Cockburn-Coquel-LeFloch:1994}
B.~Cockburn, F.~Coquel, and P.~G. LeFloch.
\newblock An error estimate for finite volume methods for multidimensional
  conservation laws.
\newblock {\em Math. Comp.}, 63(207):77-103, 1994.

\bibitem{Dafer}
C.~M. Dafermos.
\newblock The second law of thermodynamics and stability.
\newblock {\em Arch. Ration. Mech. Anal.}, 70(2):167-179, 1979.

\bibitem{FHMN}
E.~Feireisl, R.~Ho{\v{s}}ek, D.~Maltese, and A.~Novotn{\`y}.
\newblock Unconditional convergence and error estimates for bounded numerical
  solutions of the barotropic {Navier-Stokes} system.
\newblock {\em Numer. Methods Partial Differential Equations},
  33(4):1208-1223, 2017. 
  
  
\bibitem{FLM}
E.~Feireisl, M.~Luk{\'a}{\v c}ov{\'a}-Medvi{\softd}ov{\'a}, and H.~Mizerov{\'a}.
\newblock {A finite volume scheme for the {Euler} system inspired by the two
  velocities approach}.
\newblock {\em Numer. Math.}, 144(1):89-132, 2020.


\bibitem{Feireisl-Lukacova-Mizerova:2020a}
	E.~Feireisl, M.~Luk\'{a}\v{c}ov\'{a}-Medvi{\softd}ov\'{a}, and H.~Mizerov{\'a}.
	\newblock {Convergence of finite volume schemes for the {Euler} Equations via
		dissipative measure-valued solutions}.
	\newblock {\em Found. Comput. Math.}, 20(4):923-966, 2020.

\bibitem{FLMS}
E.~Feireisl, M.~Luk{\'a}{\v c}ov{\'a}-Medvi{\softd}ov{\'a}, {H}. Mizerov{\'a}  and B.~She.
\newblock {\em Numerical analysis of compressible fluid flows}.
\newblock Volume 20 of MS\&A series,  Springer-Verlag, 2021. 


\bibitem{Feireisl-Lukacova-Necasova-Novotny-She:2018}
E.~Feireisl, M.~Luk\'{a}\v{c}ov\'{a}-Medvi{\softd}ov\'{a}, \v{S}.
  Ne\v{c}asov\'{a}, A.~Novotn\'{y}, and B.~She.
\newblock Asymptotic preserving error estimates for numerical solutions of
  compressible Navier-Stokes equations in the low Mach number regime.
\newblock {\em Multiscale Modeling \& Simulation}, 16(1):150-183, 2018.

\bibitem{FeNo}
E. Feireisl and A. Novotn\'y.
\newblock {\em Singular limits in thermodynamics of viscous fluids, Second
  edition}.
\newblock {\em Birkh\"auser/Springer, Cham}, 2017.
 

\bibitem{Feistauer-Felcman-Straskraba:2003}
M.~Feistauer, J.~Felcman, and I.~Stra{\v{s}}kraba.
\newblock {\em Mathematical and computational methods for compressible flow}.
\newblock Oxford University Press, 2003.

\bibitem{Godunov}
S.~K. Godunov.
\newblock A difference method for numerical calculation of discontinuous
  solutions of the equations of hydrodynamics.
\newblock {\em Mat. Sb. (N.S.)}, 47(89):271-306, 1959.

\bibitem{JoRo}
V.~Jovanovi\'{c} and Ch. Rohde.
\newblock Error estimates for finite volume approximations of classical
  solutions for nonlinear systems of hyperbolic balance laws.
\newblock {\em SIAM J. Numer. Anal.}, 43(6):2423-2449, 2006.



\bibitem{Kuznetsov:1976}
N.~N. Kuznetsov.
\newblock Accuracy of some approximate methods for computing the weak solutions
  of a first-order quasi-linear equation.
\newblock {\em USSR Comput. Math. Math. Phys.}, 16:105-119, 1976.


\bibitem{Leveque:1992}
R.~J. LeVeque.
\newblock {\em Numerical methods for conservation laws}, volume 132.
\newblock Springer, 1992.

\bibitem{Li-Zhang-Yang:1998}
J.~Li, T.~Zhang, and S.~Yang.
\newblock {\em The two-dimensional Riemann problem in gas dynamics}, volume~98.
\newblock CRC Press, 1998.

\bibitem{LMY}
M.~Luk{\'a}{\v c}ov{\'a}-Medvi{\softd}ov{\'a} and Y.~Yuan.
\newblock Convergence of first-order finite volume method based on exact
  {Riemann} solver for the complete compressible {Euler} equations.
\newblock arXiv:2105.02165, 2021.

\bibitem{HS_MAC}
H.~Mizerov{\'a} and B.~She.
\newblock Convergence and error estimates for a finite difference scheme for
  the multi-dimensional compressible {Navier-Stokes} system.
\newblock {\em J. Sci. Comput.}, 84(1):25, 2020.



\bibitem{Shu_Osher}
C.-W. Shu and S.~Osher.
\newblock Efficient implementation of essentially non-oscillatory
  shock-capturing schemes.
\newblock {\em J. Comput. Phys.}, 77(2):439-471, 1988.



\bibitem{Tadmor-Tang:1999}
E.~Tadmor and T.~Tang.
\newblock Pointwise error estimates for scalar conservation laws with piecewise
  smooth solutions.
\newblock {\em SIAM J. Numer. Anal.}, 36(6):1739-1758, 1999.

\bibitem{Tang-Teng:1995}
T.~Tang and Z.-H. Teng.
\newblock The sharpness of {Kuznetsov's} $\mathcal{O}(\sqrt{\Delta x}) \,
  l^1$-error estimate for monotone difference schemes.
\newblock {\em Math. Comp.}, 64(210):581-589, 1995.

\bibitem{Tang-Teng:1997}
T.~Tang and Z.-H. Teng.
\newblock Viscosity methods for piecewise smooth solutions to scalar
  conservation laws.
\newblock {\em Math. Comp.}, 66(218):495-526, 1997.

\bibitem{Teng-Zhang:1997}
Z.-H. Teng and P.~Zhang.
\newblock Optimal $l^1$-rate of convergence for the viscosity method and
  monotone scheme to piecewise constant solutions with shocks.
\newblock {\em SIAM J. Numer. Anal.}, 34(3):959-978, 1997. 



\bibitem{Toro}
E.~F.~Toro.
\newblock Riemann solvers and numerical methods for fluid dynamics. A practical introduction. 
\newblock Third edition. Springer-Verlag, Berlin, 2009. xxiv+724 pp.


\bibitem{Vila:1994}
J.P. Vila.
\newblock Convergence and error estimates in finite volume schemes for general
  multidimensional scalar conservation laws. {I.} explicite monotone schemes.
\newblock {\em ESAIM: Math. Model. Numer. Anal.}, 28(3):267-295, 1994.

\end{thebibliography}
\end{document}